\numberwithin{equation}{subsection}
\theoremstyle{plain}
\newtheorem{theorem}{Theorem}[section]
\newtheorem{lemma}[theorem]{Lemma}
\newtheorem{proposition}[theorem]{Proposition}
\theoremstyle{definition}
\theoremstyle{remark}
\newtheorem{remark}[theorem]{Remark}
\newtheorem*{acknowledgments}{Acknowledgments}
\newcommand\input{[.pdf_tex}1]{\input{#1.pdf_tex}}
\newcommand\adm{\mathrm{adm}}
\newcommand\Lapq{\modL ^{\adm}_{M;p,q}}
\newcommand\GL{\mathrm{GL}}
\newcommand\Opq{\mathrm{O}(p,q;\Z)}
\newcommand\modL {\mathcal{L}}
\newcommand\modP {\mathcal{P}}
\newcommand\modQ {\mathcal{Q}}
\newcommand\modW {\mathcal{W}}
\newcommand\modD {\mathcal{D}}
\newcommand\modC {{\mathcal C}}
\newcommand\modS {{\mathcal S}}
\newcommand\modE {{\mathcal E}}
\newcommand\Q {{\mathbb Q}}
\newcommand\Z{{\mathbb Z}}
\newcommand\xto[1]{{\overset{#1}{\longrightarrow}}}
\newcommand\xtoc[1]{{\overset{#1}{\underset{\cong}{\longrightarrow}}}}
\newcommand\id{\operatorname{id}}
\newcommand\co {\colon\thinspace}
\newcommand\FI[2]{
  \begin{figure}[t]
    \begin{center}
      \input{#1.pdf_tex}
    \end{center}
    \caption{#2}
    \label{#1}
\end{figure}}
\newcommand\FIG[1]{
  \begin{center}
    \input{#1.pdf_tex}
  \end{center}}
\newcommand\opint{\operatorname{int}}
\newcommand\ct{\overset{\cong}{\to}}
\newcommand\bq{\bar{q}}
\newcommand\tx{\tilde{x}}
\newcommand\tL{\tilde{L}}
\newcommand\tl{\tilde{l}}
\newcommand\onto{\twoheadrightarrow}
\newcommand\incl{\textrm{incl}}
\newcommand\Ob{\operatorname{Ob}}
\newcommand\tr{c}
\newcommand\rank{\operatorname{rank}}
\newcommand\End{\operatorname{End}}
\newcommand\tri[1]{(#1,\rho _{#1},\phi _{#1})}
\newcommand\triM{\tri{M}}
\newcommand\triW{\tri{W}}
\newcommand\pt{\textrm{pt}}
\newcommand\Lk{\operatorname{Lk}}
\begin{document}
\title{Kirby calculus for null-homologous framed links in $3$--manifolds}

\date{April 6, 2017 (First version: April 1, 2014)}

\author{Kazuo Habiro}

\author{Tamara Widmer}

\address{Research Institute for Mathematical Sciences, Kyoto
  University, Kyoto 606-8502, Japan}

\address{Universit\"at Z\"urich, Winterthurerstr. 190 CH-8057
  Z\"urich, Switzerland}

\begin{abstract}
  A theorem of Kirby gives a necessary and sufficient condition for two
  framed links in $S^3$ to yield orientation-preserving diffeomorphic
  results of surgery.  Kirby's theorem is an important method for
  constructing invariants of $3$--manifolds.  In this paper, we prove a
  variant of Kirby's theorem for null-homologous framed links in a
  $3$--manifold.  This result involves a new kind of moves, called
  IHX-moves, which are closely related to the IHX relation in the theory
  of finite type invariants.  When the first homology group of $M$ is
  free abelian, we give a refinement of this result to $\pm1$--framed,
  algebraically split, null-homologous framed links in~$M$.
\end{abstract}

\maketitle

\section{Introduction}
\label{sec:introduction}

\subsection{Kirby calculus for framed links in $3$--manifolds}

Surgery along a framed link $L$ in a $3$--manifold $M$ is a process
of removing a tubular neighborhood of $L$ from $M$ and gluing back a
solid torus in a different way using the framing, which yields a new
$3$--manifold $M_L$.  One can also construct $M_L$ by
using the $4$--manifold $W_L$ obtained from the cylinder
$M\times[0,1]$ by attaching $2$--handles on $M\times \{1\}$ along
$L\times \{1\}$.  Then $W_L$ is a cobordism between $M_L$ and $M$.

Every closed, connected, oriented $3$--manifold can be obtained from
the $3$--sphere $S^3$ by surgery along a framed link
\cite{Lickorish,Wallace}.  Kirby's calculus of framed links \cite{Kirby} gives a
criterion for two framed links in $S^3$ to produce
orientation-preserving diffeomorphic result of surgery: two framed
links $L$ and $L'$ in $S^3$ yield orientation-preserving diffeomorphic
$3$--manifolds if and only if $L$ and $L'$ are related by a sequence of
two kinds of moves, called {\em stabilizations} and {\em
handle-slides}, depicted in Figure \ref{K1-K2}.
\FI{K1-K2}{(a) A stabilization (or $K_1$--move) adds or deletes an
isolated $\pm 1$--framed unknot.  (b) A handle-slide (or $K_2$--move)
replaces one component with the band-sum of the component with a
parallel copy of another component.}
These moves are also called $K_1$--moves and $K_2$--moves in the literature.
Kirby's theorem is used in the definition of the Reshetikhin--Turaev
invariant \cite{RT} and other quantum $3$--manifold invariants.

Fenn and Rourke \cite{FR} generalized Kirby's theorem to framed links
in a general closed $3$--manifold in two natural ways.
On the one hand, they proved that two framed links in $M$ yield
orientation-preserving diffeomorphic $3$--manifolds if and only if they
are related by a sequence of stabilizations, handle-slides and {\em
$K_3$--moves}.  Here a $K_3$--move on a framed link adds or removes a
$2$--component sublink $K\cup K'$ such that $K$ is a framed knot in $M$
with arbitrary framing, and $K'$ is a small $0$--framed knot meridional
to $K$, see Figure \ref{K3move}.  Roberts \cite{Roberts}
generalized this result to $3$--manifolds with boundary.
\FI{K3move}{A $K_3$--move $L\leftrightarrow L\cup K\cup K'$.}
On the other hand, Fenn and Rourke considered the equivalence
relation, called the {\em $\delta $--equivalence}, on framed links in $M$
generated by stabilizations and handle-slides.  They proved that two
framed links $L$ and $L'$ in a closed oriented $3$--manifold $M$ are
$\delta $--equivalent if and only if $\pi _1W_L$ and $\pi _1W_{L'}$ are
isomorphic and there is an orientation-preserving diffeomorphism
$h\co M_L\ct M_L'$ satisfying a certain condition.  This result is
generalized to $3$--manifolds with boundary \cite{HW}.  (See also
\cite{GK} for the case where the boundary is connected.)

\subsection{Kirby calculus for null-homologous framed links}

The main purpose of this paper is to study {\em calculus of null-homologous
framed links} in a compact, connected, oriented $3$--manifold $M$
possibly with non-empty boundary.

Let $k$ be $\Z  $ or $\Q$.  A framed link $L$ in $M$ is said to be {\em
$k$--null-homologous} if every component of $L$ is $k$--null-homologous
in $M$, i.e., represents $0\in H_1(M;k)$.

Let $P\subset \partial M$ be a subset which contains exactly one point of each
connected component of $\partial M$.  To a $k$--null-homologous framed link
$L$ in $M$ is associated a surjective homomorphism
\begin{gather}
  \label{e32}
  g_L\co H_1(M_L,P_L;k)\rightarrow H_1(M,P;k)
\end{gather}
defined as the composite
\begin{gather*}
  H_1(M_L,P_L;k )
  \overset{\incl_*}{\longrightarrow}
  H_1(W_L,P_L;k)
  \overset{}{\underset{\cong}{\longrightarrow}}
  H_1(W_L,P;k)
  \overset{\incl_*^{-1}}{\underset{\cong}{\longrightarrow}}
  H_1(M,P;k ).
\end{gather*}
Here $P_L\subset \partial M_L$ is the image of $P$ by the natural identification
map $\partial M\ct\partial M_L$, and the middle isomorphism is induced
by the paths $p\times[0,1]\subset\partial M\times[0,1]\subset\partial
W_L$ for $p\in P$.

A {\em $k$--null-homologous $K_3$--move} is a $K_3$--move such that the
component $K$ in the definition of a $K_3$--move is
$k$--null-homologous.  A $k$--null-homologous $K_3$--move
transforms a $k$--null-homologous framed link into another
$k$--null-homologous framed link.

We will define an {\em IHX-move} in Section \ref{sec6}.  This
move corresponds to the IHX relation for tree claspers, and is closely
related to the IHX relation in the theory of finite type invariants of
links and $3$--manifolds.

The first main result in this paper is the following.

\begin{theorem}
  \label{Qnull-homologous}
  Let $M$ be a compact, connected, oriented $3$--manifold.  Let
  $P\subset \partial M$ be a subset containing exactly one point of
  each connected component of $\partial M$.  Let $L$ and $L'$ be
  $\Q$--null-homologous framed links in $M$.  Then the following
  conditions are equivalent.
  \begin{enumerate}
  \item $L$ and $L'$ are related by a sequence of stabilizations,
  handle-slides, $\Q$--null-homologous $K_3$--moves, and IHX-moves.
  \item There is an orientation-preserving diffeomorphism $h\co
    M_L\ct M_{L'}$ restricting to the
    identification map $\partial M_{L}\cong \partial M_{L'}$ such that the following
    diagram commutes.
    \begin{gather}
      \label{e29}
      \xymatrix{
	H_1(M_L,P_L;\Q ) \ar[rr]^{h_*}_{\cong}\ar[rd]_{g_L}
	&&
	H_1(M_{L'},P_{L'};\Q )\ar[dl]^{g_{L'}}\\
	&H_1(M,P;\Q ).
      }
    \end{gather}
  \end{enumerate}
\end{theorem}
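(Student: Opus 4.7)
For (i)$\Rightarrow$(ii) I would argue move by move. Stabilizations and handle-slides give canonical diffeomorphisms of $W_L$ rel $M$, and these automatically intertwine $g_L$ and $g_{L'}$. For a $\Q$-null-homologous $K_3$-move adding $K\cup K'$, the $0$-framed meridian $K'$ cancels the $2$-handle along $K$, yielding a canonical diffeomorphism rel boundary; the hypothesis that $K$ be $\Q$-null-homologous is precisely what keeps $L\cup K\cup K'$ inside the class of $\Q$-null-homologous framed links on which $g_{L\cup K\cup K'}$ is defined, and once we are in this class compatibility with $g_L$ is immediate from the cancelling-handle picture. For the IHX-move, compatibility will follow by inspection from the definition in Section \ref{sec:ihx-moves}, where the move will be constructed so that its effect on the surgery manifold is a diffeomorphism rel boundary intertwining the $g_L$-maps.

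The reverse direction (ii)$\Rightarrow$(i) is the main work. My plan is to first invoke the Fenn--Rourke theorem in the form proved by Roberts for $3$-manifolds with boundary: since $h\co M_L\ct M_{L'}$ restricts to the identification $\partial M_L\cong \partial M_{L'}$, this provides a sequence of stabilizations, handle-slides, and general (not necessarily $\Q$-null-homologous) $K_3$-moves from $L$ to $L'$, through intermediate framed links that need not be $\Q$-null-homologous. I would then refine this sequence so that every $K_3$-move becomes $\Q$-null-homologous, at the cost of inserting stabilizations, handle-slides, and IHX-moves, using the commuting diagram (\ref{e29}) as the global constraint that makes the refinement possible.

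The crux is a homological correction lemma: whenever a $K_3$-move along $K\cup K'$ with $[K]\neq 0$ in $H_1(M;\Q)$ appears, I would like to replace it by a composition of stabilizations, handle-slides, $\Q$-null-homologous $K_3$-moves, and IHX-moves. The strategy I have in mind is to use handle-slides to write $[K]$ as a $\Q$-linear combination of classes supported on short arcs, and then to use IHX-moves, which realize the Jacobi-type identity for tree claspers at the framed-link level, to exchange and cancel these contributions pairwise. The commuting diagram hypothesis ensures that the total $H_1$-defect introduced by the non-null-homologous $K_3$-moves along the Fenn--Rourke sequence cancels globally, which is what makes the local replacements line up.

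The main obstacle will be carrying out this correction lemma concretely. While the conceptual source of the IHX-move is the IHX relation among tree claspers, translating it into explicit framed-link operations requires careful framing and handle-slide bookkeeping, and one has to verify that the corrections patch together along the whole Fenn--Rourke sequence without producing new defects. Once the correction lemma is established, the reverse direction follows by splicing the refined sequence in place of the Fenn--Rourke one.
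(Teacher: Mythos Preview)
Your direction (i)$\Rightarrow$(ii) is fine and matches the paper in spirit.

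For (ii)$\Rightarrow$(i), however, your plan has a genuine gap. The role you assign to IHX-moves---correcting the class $[K]\in H_1(M;\Q)$ of the knot in a general $K_3$--move so as to make it $\Q$--null-homologous---is not something they can do. Every component of the IHX-link $L_{IHX}$ is already $\Z$--null-homologous in the handlebody $V_4$, hence in $M$; likewise, handle-slides over $\Q$--null-homologous components leave $[K]$ unchanged in $H_1(M;\Q)$. Thus none of the moves in your list can alter the rational homology class of a given component, and your ``homological correction lemma'' cannot be carried out: a single $K_3$--move with $[K]\neq0$ is not locally replaceable by the moves in (i), regardless of any global $H_1$--cancellation along the Fenn--Rourke sequence.

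The paper's argument is structurally different and locates the obstruction in degree $4$, not degree $1$. One does not start from a Roberts sequence and repair its $K_3$--moves. Instead, with $N=\ker(\pi_1M\to H_1(M;\Q))$ and $G=\pi_1M/N\cong\Z^r$, the commuting diagram \eqref{e29} is reinterpreted (Proposition~\ref{r21}) as saying that $h$ is a $G$--diffeomorphism, and one forms the closed $4$--manifold $W=W_L\cup_\partial(-W_{L'})$ together with a map $\rho_W\co W\to K(\Z^r,1)$. The class $\eta=(\rho_W)_*[W]\in H_4(\Z^r)\cong\bigwedge^4\Z^r$ is precisely the obstruction to $L$ and $L'$ being related by stabilizations, handle-slides and $\Q$--null-homologous $K_3$--moves (Theorem~\ref{t2}). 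The point of the IHX-move is Proposition~\ref{r14}: one such move changes $\eta$ by $\pm p(y_1\wedge\cdots\wedge y_4)$ for prescribed $y_i\in H_1M$, which is proved by exhibiting (Theorem~\ref{IHX-T4}) a handle decomposition of $T^4$ built from the IHX-link. Since these elements generate $H_4(\Z^r)$, one first applies IHX-moves to kill $\eta$, and then Theorem~\ref{t2} finishes. The closed case is deduced by deleting an open $3$--ball and applying the bounded case.
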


In Theorem \ref{Qnull-homologous}, IHX-moves are necessary only when
$\rank H_1M\ge4$, see Remark \ref{r3}.  If $M$ is a rational homology
sphere, then Theorem \ref{Qnull-homologous} without IHX-moves recovers
\cite[Theorem 8]{FR}, since every framed link in $M$ is
$\Q$--null-homologous.

Theorem \ref{Qnull-homologous} is proved in Section
\ref{sec:null-homol-fram} for $M$ with non-empty boundary, and in
Section \ref{sec:proof-theor-refqn} for $M$ closed.

If $H_1(M;\Z  )$ is free abelian, then $\Q$--null-homologous framed links
in $M$ are $\Z  $--null-homologous.  It is easy to see that Theorem
\ref{Qnull-homologous} implies the following.

\begin{theorem}
  \label{Znull-homologous}
  Let $M$ and $P$ be as in Theorem \ref{Qnull-homologous}, and assume
  that $H_1(M;\Z  )$ is free abelian.  Let $L$ and $L'$ be $\Z  $--null-homologous
  framed links in $M$.  Then the following conditions are equivalent.
  \begin{enumerate}
  \item $L$ and $L'$ are related by a sequence of stabilizations,
    handle-slides, $\Z  $--null-homologous $K_3$--moves, and IHX-moves.
  \item There is an orientation-preserving diffeomorphism $h\co
    M_L\ct M_{L'}$ restricting to the
    identification map $\partial M_{L}\cong \partial M_{L'}$ such that the following diagram
    commutes.
    \begin{gather}
      \label{e30}
      \xymatrix{
	H_1(M_L,P_L;\Z  ) \ar[rr]^{h_*}_{\cong}\ar[rd]_{g_L}
	&&
	H_1(M_{L'},P_{L'};\Z  )\ar[dl]^{g_{L'}}\\
	&H_1(M,P;\Z  ).
      }
    \end{gather}
  \end{enumerate}
\end{theorem}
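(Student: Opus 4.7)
The plan is to deduce Theorem \ref{Znull-homologous} directly from Theorem \ref{Qnull-homologous} by exploiting two elementary consequences of the hypothesis that $H_1(M;\Z)$ is free abelian. First, the coefficient map $H_1(M;\Z)\to H_1(M;\Q)$ is then injective, so a framed knot (or link) in $M$ is $\Z$--null-homologous if and only if it is $\Q$--null-homologous; in particular every $\Q$--null-homologous $K_3$--move in $M$ is automatically a $\Z$--null-homologous one. Second, $H_1(M,P;\Z)$ is itself free abelian: from the long exact sequence of the pair $(M,P)$, together with $H_1(P;\Z)=0$ and $\tilde H_0(M;\Z)=0$, one obtains a short exact sequence
\begin{equation*}
0\to H_1(M;\Z)\to H_1(M,P;\Z)\to \tilde H_0(P;\Z)\to 0,
\end{equation*}
which splits because $\tilde H_0(P;\Z)$ is free. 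In particular the natural map $H_1(M,P;\Z)\hookrightarrow H_1(M,P;\Q)$ is injective.

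For the direction (i)$\Rightarrow$(ii), I would observe that by the first fact above every $\Z$--null-homologous $K_3$--move appearing in the given sequence is in particular a $\Q$--null-homologous $K_3$--move, so the sequence is admissible in Theorem \ref{Qnull-homologous}(i). That theorem then supplies an orientation-preserving diffeomorphism $h\co M_L\ct M_{L'}$, restricting to the identification of boundaries, for which the triangle \eqref{e29} commutes. To upgrade to commutativity of \eqref{e30} over $\Z$, I would use naturality of the change-of-coefficients map together with the injectivity of $H_1(M,P;\Z)\hookrightarrow H_1(M,P;\Q)$: the homomorphism $g_L-g_{L'}\circ h_*\co H_1(M_L,P_L;\Z)\to H_1(M,P;\Z)$ vanishes after extending scalars to $\Q$, hence vanishes already over $\Z$.

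For the converse direction (ii)$\Rightarrow$(i), tensoring \eqref{e30} with $\Q$ yields commutativity of \eqref{e29}, and Theorem \ref{Qnull-homologous} then provides a sequence of stabilizations, handle-slides, $\Q$--null-homologous $K_3$--moves, and IHX-moves from $L$ to $L'$. By the first fact, each $\Q$--null-homologous $K_3$--move in $M$ is in fact $\Z$--null-homologous, so this sequence already realizes (i). No step in this deduction presents a genuine obstacle; the only point requiring care is verifying the torsion-freeness of $H_1(M,P;\Z)$, which is precisely what licenses transferring the commutative diagram from $\Q$-coefficients to $\Z$-coefficients.
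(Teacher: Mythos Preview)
Your proposal is correct and follows exactly the approach the paper indicates: the paper states that when $H_1(M;\Z)$ is free abelian, $\Q$--null-homologous and $\Z$--null-homologous coincide, and then simply asserts that Theorem~\ref{Qnull-homologous} implies Theorem~\ref{Znull-homologous}. Your argument supplies precisely the details behind this implication, including the torsion-freeness of $H_1(M,P;\Z)$ needed to pass the commutative triangle from $\Q$-- to $\Z$--coefficients.
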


In a future paper \cite{H2}, the first author plans to generalize
Theorem \ref{Znull-homologous} to the case where $H_1(M;\Z )$ has
non-trivial torsion.

\subsection{Kirby calculus for admissible framed links}

The second main result of this paper is a refinement of Theorem
\ref{Znull-homologous} to a special class of framed links, called {\em
admissible framed links}.

Let $L=L_1\cup \dots \cup L_k$ be a $\Z  $--null-homologous framed link in a compact, connected,
oriented $3$--manifold $M$.
Note that $L$ admits a well-defined {\em linking matrix}
\begin{gather*}
  \Lk(L)=(l_{ij})_{1\le i,j\le k},
\end{gather*}
where $l_{ii}$ is the framing of $L_i$, and $l_{ij}$ ($i\neq j$) is
the linking number of $L_i$ and $L_j$.  Then $L$ is said to be {\em
admissible} if the linking matrix $\Lk(L)$ is diagonal with diagonal
entries $\pm 1$.  We call surgery along an admissible framed link an
{\em admissible surgery}.

Admissible surgeries on $3$--manifolds have been studied in several
places.  It is well known that every integral homology $3$--sphere can
be obtained from $S^3$ by an admissible surgery.  Ohtsuki used
admissible surgeries to define the notion of finite type invariants of
integral homology spheres \cite{Ohtsuki:FTI}.
Cochran, Gerges and Orr \cite{CGO} studied the equivalence relation on
closed, oriented $3$--manifolds generated by admissible surgeries,
called {\em $2$--surgeries} in \cite{CGO}.  They gave a
characterization for two closed oriented $3$--manifolds to be
equivalent under admissible surgeries.
Cochran and Melvin \cite{CM} used admissible surgeries to define
finite type invariants of $3$--manifolds generalizing Ohtsuki's
definition of finite type invariants of integral homology spheres.

A {\em band-slide} on a framed link is an algebraically cancelling
pair of handle-slides, see Figure~\ref{bandslide}.
\FI{bandslide}{A band-slide of the component $L_i$ over
$L_j$.}
A band-slide
preserves the homology classes of the components of a link, and also
preserves the linking matrix of a $\Z $--null-homologous framed link.
Thus, a band-slide on an admissible framed link yields an admissible
framed link.

The first author proved the following refinement of Kirby's theorem to
admissible framed links in $S^3$.

\begin{theorem}[\cite{H:kirby}]
  \label{r45}
  Let $L$ and $L'$ be two admissible framed links in $S^3$.  Then the
  following conditions are equivalent.
  \begin{enumerate}
  \item $L$ and $L'$ are related by a sequence of stabilizations
  and band-slides.
  \item $S^3_L$ and $S^3_{L'}$ are orientation-preserving
  diffeomorphic.
  \end{enumerate}
\end{theorem}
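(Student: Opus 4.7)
The implication (i) $\Rightarrow$ (ii) is immediate: stabilization by a $\pm 1$-framed unknot in $S^3$ does not change the surgered manifold, and a band-slide is by definition an algebraically cancelling pair of handle-slides, each of which preserves the surgery up to orientation-preserving diffeomorphism.

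For (ii) $\Rightarrow$ (i), my plan starts from Kirby's theorem, which furnishes a sequence $L = L^{(0)} \to L^{(1)} \to \cdots \to L^{(n)} = L'$ of stabilizations and handle-slides, and then rewrites it so that every intermediate link is admissible and only stabilizations and band-slides appear. The obstruction is that a handle-slide of $L_i$ over $L_j$ in an admissible link changes the $i$-th diagonal entry of the linking matrix to $l_{ii} + l_{jj} + 2 l_{ij} = \pm 1 \pm 1 \in \{0,\pm 2\}$, so admissibility is generically destroyed after one move. Consequently, any destructive handle-slide in the Kirby sequence must be matched, later in the sequence, by a compensating handle-slide that restores the diagonal $\pm 1$ form, since the terminal link $L'$ is again admissible.

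The core of the plan is a pairing-and-replacement lemma, proved by induction on the length of the Kirby sequence: each destructive handle-slide can be brought into adjacency with its compensating partner via a rearrangement of commuting moves, after which the pair is replaced by a single band-slide. I would organize the bookkeeping through the $4$-manifold $W_L$, which is simply connected with diagonal $\pm 1$ intersection form. In this language, band-slides correspond to signed permutations of the diagonal basis; stabilizations correspond to orthogonal direct sum with $\langle \pm 1 \rangle$; and the hypothesis $S^3_L \cong S^3_{L'}$, combined with the diagonality of the intersection forms of $W_L$ and $W_{L'}$, implies by a standard classification that, after enough $\langle \pm 1 \rangle$-summands have been added, the two diagonal bases are related by signed permutations. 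Each such signed permutation is the shadow of a finite sequence of band-slides.

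The main obstacle is the geometric realization of this algebraic reorganization. Handle-slides do not commute in general, so the process of moving a slide past another to bring a compensating pair together introduces commutator-like correction terms, and each correction must be absorbed into further stabilizations and band-slides. One must then show that the accumulated stockpile of auxiliary $\pm 1$-framed unknots can be cancelled at the end while keeping every intermediate link admissible. Controlling this combinatorial bookkeeping, and verifying that the geometric moves faithfully implement the algebraic factorization provided by the intersection form, is where the bulk of the technical work lies.
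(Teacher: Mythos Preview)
Your proposal rests on an incorrect algebraic picture. Under the functor $\varphi\co\modS_{S^3,n}\to\GL(n;\Z)$ that records the action of elementary moves on linking matrices (Section~\ref{sec:orient-order-admiss}), a band-slide $\modW_{i,j}^{+1}\modW_{i,j}^{-1}$ maps to $(I_n+E_{i,j})(I_n-E_{i,j})=I_n$, not to a signed permutation; signed permutations arise from reordering and reorienting components, which are free operations on unordered, unoriented links. More seriously, a Kirby sequence $S$ between two admissible links of type $(p,q)$ yields $\varphi(S)\in\Opq$, and for $p,q\ge2$ this group is \emph{not} generated by signed permutations: Wall's theorem (Lemma~\ref{hk-lem43}) shows that an additional generator $D_{p,q}$ is required. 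So even after arbitrary stabilization the two diagonal bases need not differ by a signed permutation, and your pairing-and-replacement scheme has no mechanism for the $D_{p,q}$ factors. The appeal to ``commutator-like correction terms'' does not fill this gap; you yourself concede that the bookkeeping is where the real work lies, without supplying a method.

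The argument in \cite{H:kirby}, whose apparatus is reproduced in Section~\ref{sec9}, proceeds along different lines. Its key lemma (Theorem~\ref{hk-thm21} here) is that if a sequence $S$ of elementary moves satisfies $\varphi(S)=I_n$, then its endpoints are related by band-slides alone. Given a Kirby sequence $S\co L\to L'$ with $\varphi(S)\in\Opq$, one uses Wall's generators to construct an explicit sequence $T\co L\to L''$ through admissible links with $\varphi(T)=\varphi(S)$; then $\varphi(T\bar S)=I_{p+q}$ and the key lemma produces band-slides from $L'$ to $L''$. The remaining geometric input, specific to $S^3$, is that the generator $D_{p,q}$ can itself be realized by stabilizations and band-slides.
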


The second main result of this paper is Theorem \ref{admissible}
below, which refines Theorem \ref{Znull-homologous} and generalizes
Theorem \ref{r45}.  To state it, we need new moves on admissible
framed links called {\em pair
  moves}, {\em lantern-moves} and {\em admissible IHX-moves}.

Two admissible framed links in $M$ are said to be related by a {\em
pair move} if one of them, say $L'$, is obtained from the other, say
$L$, by adjoining a $2$--component admissible framed link $K^+\cup
K^-$ in $M\setminus L$, where $K^+$ and $K^-$ are parallel to each
other and $K^+$ and $K^-$ have framings $+1$ and $-1$, respectively,
see Figure \ref{pair-move}.  It follows that $L$ and $L'$ give
diffeomorphic results of surgery, since one can handle-slide $K^+$
over $K^-$ to obtain from 
$L'=L\cup K^+\cup K^-$ a framed link $\tilde{L}=L\cup J\cup K^-$ which is related to $L$
%%$L=L'\cup K^+\cup K^-$ a framed link $\tilde{L}=L'\cup J\cup K^-$ which is related to $L'$
by a $\Z$--null-homologous $K_3$--move.  \FI{pair-move}{A pair-move}

A {\em lantern-move} is defined as follows.  Let $V_3$ be a handlebody
of genus $3$, which is identified with the complement of the tubular
neighborhood of a trivial $3$--component string link
\begin{gather*}
  \gamma =\gamma _1\cup \gamma _2\cup \gamma _3=(\text{three points})\times [0,1]
\end{gather*} in the cylinder $D^2\times [0,1]$.  Let $K$ and $K'$ be
two framed links in $V_3$ as depicted in Figure \ref{lantern}(a) and
(b), respectively.  \FI{lantern}{A lantern-move in a genus $3$
handlebody $V_3$.}  Here all the components in $K$ and $K'$ are $+1$,
where the framings are defined in the cylinder.  The framed links $K$
and $K'$ correspond to the products of Dehn twists in a $3$--punctured
torus $D_3$ appearing in the two sides of the lantern relation in the
mapping class group.  Let $L$ be an admissible framed link in a
$3$--manifold $M$, and let $f\co V_3\hookrightarrow M\setminus L$ be
an orientation-preserving embedding such that both $L\cup f(K)$ and
$L\cup f(K')$ are admissible in $M$.  (In fact, $L\cup f(K)$ is
admissible if and only if $L\cup f(K')$ is admissible.)  Then the two
framed links $L\cup f(K)$ and $L\cup f(K')$ are said to be related by
a lantern-move.  A lantern-move preserves the diffeomorphism class of
the results of surgery since we have a diffeomorphism
$(V_3)_{K}\cong(V_3)_{K'}$ restricting to the canonical map $\partial
(V_3)_{K}\cong\partial (V_3)_{K'}$.  The latter fact follows since the
results of surgery along $K$ and $K'$ on the framed string link $\gamma \subset D^2\times [0,1]$
 are equivalent.  Alternatively, one
can check that $K$ and $K'$ are $\delta $--equivalent in $V_3$.

An {\em admissible IHX-move}, defined in Section
\ref{sec:admissible-ihx-moves-1}, is a modification of an IHX-move,
involving only admissible framed links.

\begin{theorem}
  \label{admissible}
  Let $M$, $P$ be as in Theorem \ref{Znull-homologous}.  Let $L$ and
  $L'$ be admissible framed links in $M$.  Then the following
  conditions are equivalent.
  \begin{enumerate}
  \item $L$ and $L'$ are related by a sequence of stabilizations,
    band-slides, pair-moves, admissible IHX-moves, and lantern-moves.
  \item There is an orientation-preserving diffeomorphism $h\co
    M_L\ct M_{L'}$ restricting to the natural
    identification map $\partial M_{L}\ct \partial  M_{L'}$ such that Diagram
    \eqref{e30} commutes.
  \end{enumerate}
\end{theorem}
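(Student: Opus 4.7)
The forward direction (i) $\Rightarrow$ (ii) is handled move by move. Stabilisations, band-slides (two algebraically cancelling handle-slides), and admissible IHX-moves are instances of moves already covered by Theorem~\ref{Znull-homologous}. A pair-move factors, as noted in the introduction, as a $\Z$--null-homologous $K_3$-move followed by a handle-slide; a lantern-move was also shown there to preserve the surgery result. In each case the induced diffeomorphism on surgery results is the identity on the boundary and makes \eqref{e30} commute, so Theorem~\ref{Znull-homologous} yields (ii).

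For the converse (ii) $\Rightarrow$ (i), apply Theorem~\ref{Znull-homologous} to obtain a sequence
\[
 L = L^{(0)} \to L^{(1)} \to \cdots \to L^{(n)} = L'
\]
of stabilisations, handle-slides, $\Z$--null-homologous $K_3$-moves, and IHX-moves. Each $L^{(i)}$ is $\Z$--null-homologous but need not be admissible. The plan, in the spirit of \cite{H:kirby}, is to admissibilise the intermediate stages by adjoining auxiliary $\pm1$-framed pairs via pair-moves, rewrite each of the original moves as a sequence of admissible moves acting on the enlarged links, and then remove the auxiliary pairs by inverse pair-moves at the end.

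The three local reductions required are the following. \textbf{(a)} A single handle-slide between two $\Z$--null-homologous links, after adjoining suitable auxiliary $\pm1$-framed Hopf pairs so that both endpoints are admissible, can be rewritten as a sequence of band-slides, pair-moves, and stabilisations; this is the direct analogue of Habiro's handle-slide decomposition from \cite{H:kirby} in our setting. \textbf{(b)} A $\Z$--null-homologous $K_3$-move is precisely the inverse of the pair-move factorisation used in the forward direction, so introducing suitable $\pm1$-framed parallels and performing band-slides recovers it within the admissible calculus. \textbf{(c)} A general IHX-move between admissible links reduces, after similar admissibilisation, to an admissible IHX-move modulo band-slides and pair-moves. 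The lantern-move is the essential new ingredient in step (a): it realises a mapping class group relation which cannot be expressed as a product of band-slides and pair-moves, and is forced precisely when the surgery diffeomorphism between $M_L$ and $M_{L'}$ involves an element of the corresponding Torelli-type subgroup.

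The principal obstacle is step (a) together with the bookkeeping needed to glue the local reductions into a globally valid sequence: one must check that the auxiliary pair-moved components introduced at each stage can be tracked consistently through later admissibilisations and ultimately removed with no net effect, and that the lantern-move genuinely closes the remaining gap at each handle-slide. Once these ingredients are in place, splicing the admissibilised versions of the original moves produces a sequence of stabilisations, band-slides, pair-moves, admissible IHX-moves, and lantern-moves from $L$ to~$L'$, establishing (i).
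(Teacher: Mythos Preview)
Your overall architecture---reduce via Theorem~\ref{Znull-homologous} and then convert each move type to admissible moves---matches the paper's, but the heart of the argument is missing and at least one step is misdescribed.

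The central gap is in step~(a). You cannot rewrite handle-slides \emph{locally}, one at a time, as sequences of band-slides, pair-moves and stabilisations: a single handle-slide on an admissible link does not produce an admissible link, and there is no local admissible substitute. The paper's argument is global. After first eliminating the IHX-moves and the $K_3$-moves (in that order), one is left with a sequence of handle-slides and stabilisations between admissible links; pushing the stabilisations to the ends leaves a pure handle-slide sequence $S$ between admissible links of type $(p,q)$ with $p,q\ge2$. The functor $\varphi$ of \cite{H:kirby} sends $S$ to a matrix $\varphi(S)\in\mathrm{O}(p,q;\Z)$, and the decisive input is Wall's theorem that $\mathrm{O}(p,q;\Z)$ is generated by permutations, sign changes, and one specific matrix $D_{p,q}$. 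One then constructs a second sequence $T$ realising the same matrix using only $\modP$-, $\modQ$- and $\modD^{\pm1}$-moves; since $\varphi(T\bar S)=I$, the result \cite[Theorem~2.1]{H:kirby} gives band-slides from $L'$ to the endpoint of $T$. The lantern-move enters not as a ``Torelli-type'' relation but precisely to realise the $\modD^{\pm1}$-move (Lemma~\ref{r4}): two lantern-moves plus pair-moves implement the eight handle-slides encoded by $D_{p,q}$. None of this algebraic mechanism appears in your sketch.

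Your step~(b) is also off: a $\Z$-null-homologous $K_3$-move adjoins a knot of \emph{arbitrary} framing together with a $0$-framed meridian, which is not the inverse of a pair-move. The paper's elimination (Section~\ref{sec:step-2.-eliminating}) instead first adjoins, via pair-moves, a fixed auxiliary admissible link $K$ whose components normally generate $[\pi_1M,\pi_1M]$; one then slides the $K_3$-component over these to make it null-homotopic, after which crossing changes (realised by slides over the $0$-framed meridian) unknot it and stabilisations remove it. Tracking the auxiliary $K$ through ambient isotopies of the original sequence requires a separate argument using the annuli $A_j$ bounded by the pairs $K_j^+\cup K_j^-$.
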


The set of moves given in Theorem \ref{admissible}(ii) is not minimal.
See Section \ref{sec:realizing-moves-with}.  In particular, admissible
IHX-moves are not necessary when $\rank H_1M<4$.  If, moreover, $M$ is
$S^3$, then we do not need the pair-moves or the lantern-moves and we
recover Theorem \ref{r45}.

Theorem \ref{admissible} gives a method to study $3$--manifolds
obtained by admissible surgery on a fixed $3$--manifold $M$ with free
abelian $H_1M$.  Thus, it is expected that Theorem \ref{admissible}
has some applications to the surgery equivalence relation studied by
Cochran, Gerges and Orr \cite{CGO} and to Cochran and Melvin's finite
type invariants \cite{CM}.

In particular, in a future paper the first author plans to use Theorem
\ref{admissible} in order to construct
an invariant $J_2(M)$ of $3$--manifolds $M$ which are admissible
surgery equivalent to a fixed $3$--manifold $M_0$ with
$H_1M_0\cong\Z^r$, $r\ge0$.  If $M_0=\Sigma_{g,1}\times[0,1]$ is a
cylinder over a surface $\Sigma_{g,1}$ of genus $g$ with
$\partial\Sigma_{g,1}\cong S^1$, the manifolds we consider here are
exactly homology cylinders over $\Sigma_{g,1}$ with vanishing first
Johnson homomorphism, and the invariant $J_2(M)$ coincides with the
image of $M$ by the second Johnson homomorphism.  Thus, the invariant
$J_2(M)$ may be regarded as a natural generalization of the second
Johnson homomorphism of homology cylinders.

\subsection{Organization of the paper}
\label{sec:organization-paper}

The rest of this paper is organized as follows.
In Section \ref{sec2} we recall the generalization of Fenn and
Rourke's theorem to $3$--manifolds with boundary obtained in \cite{HW}.
We use fundamental groupoids of $3$--manifolds to simplify the statement.
This result is modified in Section \ref{sec3} for $N$--links in a
$3$--manifold $M$.  Here $N$ is a normal subgroup of $\pi_1M$, and an
$N$--link is a framed link such that the homotopy class of each
component is contained in $N$.
In Section \ref{sec4} we fix a group $G$ and consider manifolds over
the Eilenberg--Mac Lane space $K(G,1)$.  This section includes further
modification of the result in Section \ref{sec3} and preparation for
the next section.
In Section \ref{sec5} we show that each homology class in $H_4(G)$
can be realized by a framed link in a handlebody $V$.
Sections  \ref{sec6} and \ref{sec7} are preparations for Section
\ref{sec8}.
In Section \ref{sec6} we define IHX-moves on null-homologous framed
links in a $3$--manifold, and in Section \ref{sec7} we give a new
handle decomposition of the $4$--torus $T^4$ related to the IHX-move.
In Sections \ref{sec8} and \ref{sec9} we prove Theorems
\ref{Qnull-homologous} and \ref{admissible}, respectively.

\begin{acknowledgments}
  The first author was partially supported by JSPS, Grant-in-Aid for
  Scientific Research (C) 24540077.  The second author was supported
  by SNF, No.\ 200020\_134774/1.  The first author thanks Christian
  Blanchet and Gregor Masbaum for helpful discussions.
\end{acknowledgments}

\section{Generalization of Fenn and Rourke's theorem to $3$--manifolds
 with boundary}
\label{sec2}

In this section, we state the generalization of Fenn and Rourke's
theorem \cite{FR} to $3$--manifolds with boundary that we proved in
\cite{HW}.  We mainly follow the constructions in \cite{HW}, but the
description given here is slightly simplified by the use of fundamental
groupoids.

\subsection{Fundamental groupoids}
\label{sec:fund-group}
Let $X$ be a topological space, and let $P\subset X$ be a subset.  Let
$\Pi (X,P)$ denote the fundamental groupoid of $X$ with respect to
$P$.  The objects of $\Pi (X,P)$ are the elements of $P$, and the
morphisms from $p\in P$ to $p'\in P$ are homotopy classes of paths
from $p$ to $p'$.  The set $\Pi (X,P)(p,p')$ of morphisms from $p$ to
$p'$ is denoted usually by $\Pi (X;p,p')$.  For $p\in P$, we set
$\pi _1(X,p)=\Pi (X;p,p)$, the fundamental group of $X$ at $p$.

If $X$ is connected, then $\Pi (X,P)$ is a connected groupoid, i.e., for
$p,p'\in P$, the set $\Pi (X;p,p')$ is non-empty.  If, moreover, $N$ is a
normal subgroup of $\pi _1(X,p)$, then we denote by $\Pi (X,P)/N$ the
quotient of $\Pi (X,P)$ by the equivalence relation $\sim$ on
the morphisms in $\Pi(X,P)$ such that for $g,g'\co p'\to p''$ ($p',p''\in P$), we have
$g\sim g'$ if and only if for any $f\co p\to p'$ we have $f^{-1}g^{-1}g'f\in N$.

In this paper, $\twoheadrightarrow$ for groupoids denotes an
epimorphism in the category of groupoids, i.e., a full functor which
is surjective on objects.  In fact, all the groupoid epimorphism
denoted by
$\twoheadrightarrow$ appearing below are be bijective on objects.  In the above
situation we have an epimorphism
\begin{gather*}
  \Pi (X,P)\twoheadrightarrow\Pi (X,P)/N,
\end{gather*}
which is the identity on objects.

In this section, we fix a compact, connected, oriented $3$--manifold
$M$ with non-empty boundary, whose components will be denoted by
$F_1, \ldots, F_t$ ($t\ge1$).  We also fix
\begin{gather*}
  P=\{p_1,\ldots ,p_t\}\subset \partial M,
\end{gather*}
where $p_i\in F_i$ for each $i=1,\ldots ,t$.
We consider the fundamental groupoid $\Pi (M,P)$ of $M$ with respect to
$P$.  Since $M$ is connected, the groups $\pi _1(M,p_i)$ for $i=1,\ldots ,t$
are isomorphic to each other.  We regard $p_1$ as the basepoint of
$M$, and often write $\pi _1M=\pi _1(M,p_1)$.

\subsection{Framed links and surgery}
\label{sec:framed-links-surgery}

A {\em framed link} $L=L_1\cup\dots\cup L_n$ in $M$ is a link such
that each component $L_i$ of $L$ is given a framing, i.e., a homotopy
class of a simple closed curve $\gamma_i$ in the boundary $\partial
N(L_i)$ of a tubular neighborhood $N(L_i)$ of $L_i$ in $M$ which is
homotopic to $L_i$ in $N(L_i)$. {\em Surgery} along a framed link $L$
denotes the process of removing the interior of $N(L_i)$, and gluing
a solid torus $D^2\times S^1$ to $\partial N(L_i)$ so that the curve
$\partial D^2\times \{*\}$, $*\in S^1$, is attached to
$\gamma_i\subset\partial N(L_i)$ for $i=1,\ldots,n$.  We denote
the result of surgery by $M_L$.  Note that the boundary $\partial M_L$ is
naturally identified with $\partial M$.

Surgery along a framed link can also be defined by using
$4$--manifolds, as mentioned in the introduction.  In the above
situation, let $W_L$ denote the
$4$--manifold obtained from the cylinder $M \times I$, where $I=[0,1]$,
by attaching a $2$--handle $D^2\times D^2$ along $N(L_i)\times\{1\}$
using a diffeomorphism
\begin{gather*}
  S^1\times D^2\overset{\cong}{\to} N(L_i),
\end{gather*}
which maps $S^1\times\{*\}$, $*\in\partial D^2$, onto the framing $\gamma_i$.
We have a natural identification
$$\partial W_L\cong M\underset{\partial M}{\cup}
(\partial M\times I)\underset{\partial M_L}{\cup} M_L.$$
Thus, $W_L$ is a cobordism between $M$ and $M_L$.
Note that $\partial W_L$ is a connected, closed $3$--manifold.

Set $P_L=\{p_1^L,\dots,p_t^L\}\subset \partial M_L$, where
$p_k^L=p_k\times\{1\}\in\partial M_L$ for $k=1,\dots,t$.  Let
$\gamma_k=p_k\times I \subset \partial W_L$ for $k=1,\dots,t$.  Note
that $\gamma _k$ is an arc in $\partial W_L$ from $p_k\in\partial
M\subset \partial W_L$ to $p_k^L$.

The point $p_1$ is regarded as a basepoint of $W_L$ as well as of
$M$, and we set $\pi _1W_L:=\pi _1(W_L,p_1)$.  We regard $p_1^L$ as the
basepoint of $M_L$ and write $\pi _1M_L:=\pi _1(M,p_1^L)$.

The inclusions
\begin{equation*}
  M \overset{i}{\hookrightarrow} W_L \overset{i'}{\hookleftarrow} M_L
\end{equation*}
induce full functors
\begin{gather*}
  \xymatrix{
    \Pi (M,P)\ar@{->>}[r]^{i_*}& \Pi (W_L,P)
    & \Pi (M_L,P_L)   . \ar@{->>}[l]_{i'_*}
  }
\end{gather*}
Here $i'_*$ is defined as the composite
\begin{gather*}
  \Pi (M_L,P_L)\overset{i'_*}{\longrightarrow} \Pi (W_L,P_L)\underset{\cong}{\overset{\gamma_1,\dots,\gamma_t}{\longrightarrow}} \Pi (W_L,P),
\end{gather*}
where the second isomorphism is induced by the arcs $\gamma_1,\dots,\gamma_t$.

Let $N_L$ denote the normal subgroup of $\pi _1M$ normally
generated by the homotopy classes of the components of $L$.  Then we have
\begin{gather}
  \label{e11}
  N_L=\ker(i_*\co \pi _1M\rightarrow \pi _1W_L).
\end{gather}

\subsection{Fenn--Rourke theorem for $3$--manifolds with boundary}
\label{sec:constr-homol-class}

Fenn and Rourke \cite[Theorem 6]{FR} characterized the condition for
two framed links in a closed, oriented $3$--manifold to be related by
a sequence of stabilizations and handle-slides.  Garoufalidis and
Kricker \cite{GK} and the authors \cite{HW} generalized it to
$3$--manifolds with boundary.
(The result in \cite{GK} was not correct for $3$-manifold with more
than one boundary components.  A correct version was given in
\cite{HW}.)
In this subsection we state this result in a slightly different way
using fundamental groupoids.

Let $L$ and $L'$ be framed links in $M$ and suppose that there is an
orientation-preserving diffeomorphism
\begin{gather*}
  h\co M_L\overset{\cong}{\rightarrow} M_{L'}
\end{gather*}
which restricts to the natural identification map
$\partial M_L\cong\partial M_{L'}$.
Then we obtain a closed, oriented $4$--manifold
\begin{gather*}
  W=W_{M,L,L',h}:= W_L\cup _\partial (-W_{L'})
\end{gather*}
by gluing $W_L$ with $-W_{L'}$ along their boundaries using the map
\begin{gather*}
 h\cup \id_{(M\times \{0\})\cup (\partial M\times [0,1])} \co \partial W_L\ct\partial W_{L'}.
\end{gather*}

Suppose that we have $N_L=N_{L'}$.  Then there exists a unique
groupoid isomorphism $f\co  \Pi (W_{L},P) \to \Pi (W_{L'},P)$, which is the
identity on objects, such that the triangle in the diagram
\begin{gather} \label{d1}
  \xymatrix{
  \Pi (M_{L},P_L) \ar[rr]^{h_*}_{\cong}\ar@{->>}[d]_{i'_*} & & \Pi (M_{L'},P_{L'}) \ar@{->>}[d]^{i'_*} \\
    \Pi (W_{L},P) \ar[rr]^{f}_{\cong}& &\Pi (W_{L'},P)\\
    &\Pi (M,P) \ar@{->>}[ul]^{i_*} \ar@{->>}[ur]_{i_*}&
  }
\end{gather}
commutes.

Suppose moreover that the square in Diagram \eqref{d1} commutes.

Let  $j,j',u,u'$ be the inclusion maps in the diagram
\begin{gather}
  \label{e14}
  \xymatrix{
    \partial W_L\ar[r]^{u'}\ar[d]_{u}&
    W_{L'}\ar[d]_{j'}\\
    W_L\ar[r]^{j}&
    W.
    }
\end{gather}
Consider the $\pi _1$ of the above diagram
\begin{gather}
  \label{e13}
  \xymatrix{
    \pi _1\partial W_L\ar@{->>}[r]^{u'_*}\ar@{->>}[d]_{u_*}&
    \pi _1W_{L'}\ar[d]_{j'_*}^{\cong}\\
    \pi _1W_L\ar[r]^{j_*}_{\cong}\ar[ur]^{f_1}_{\cong}&
    \pi _1W.
    }
\end{gather}
Here, the square is a pushout by the Van Kampen theorem since $\partial W_L$
is connected.
The isomorphism $f_1$ is defined by
\begin{gather*}
  f_1=f\co \Pi (W_L,P)(p_1,p_1)\to\Pi (W_{L'},P)(p_1,p_1).
\end{gather*}
In \cite[Lemma 2.1]{HW}, we proved that Diagram \eqref{e13} commutes.
It follows that $j_*$ and $j'_*$ are isomorphisms.
Thus we have
\begin{gather*}
  \pi _1W\cong \pi _1W_L\cong\pi _1W_{L'}\cong \pi _1M/N_L.
\end{gather*}

Let $K(\pi _1W,1)$ be the Eilenberg--Mac Lane space, which is obtained
from $W$ by adding cells of dimension $\ge 3$.  Let
\begin{gather*}
  \rho _W\co W\rightarrow  K(\pi _1W,1)
\end{gather*}
be the inclusion map.  We set
\begin{gather*}
  \eta (M,L,L',h)=(\rho _W)_\ast ([W]) \in H_4(\pi _1W),
\end{gather*}
where $[W]\in H_4W$ is the fundamental class.  Here, and in what
follows, for a group $G$ we identify $H_*(K(G,1))$ with $H_*(G)$.

Now we state our generalization of Fenn and Rourke's theorem.  (When
$\partial M$ is connected, this is equivalent to the corresponding
case of the statement given in \cite[Theorem 4]{GK}.)

\begin{theorem}[{\cite[Theorem 2.2]{HW}}]
  \label{r7}
  Let $L$ and $L'$ be framed links in a compact, connected, oriented
  $3$--manifold $M$ with non-empty boundary.
  Then the following conditions are equivalent.
  \begin{enumerate}
  \item $L$ and $L'$ are $\delta $--equivalent.
  \item There is a diffeomorphism $h\co  M_L\ct M_{L'}$ restricting to the
    identification map $\partial M_L\cong\partial M_{L'}$, and there is a
    groupoid isomorphism
    \begin{gather*}
      f\co \Pi (W_L,P)\ct \Pi (W_{L'},P)
    \end{gather*}
    such that Diagram \eqref{d1} commutes and we have
    $\eta (M,L,L',h)=0\in H_4(\pi _1W)$.
  \end{enumerate}
\end{theorem}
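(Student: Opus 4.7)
The plan is to adapt Fenn and Rourke's strategy \cite{FR} to the bounded setting, using the groupoid formulation to keep track of basepoints across the several components of $\partial M$. For (i)$\Rightarrow$(ii), I would verify invariance under each of the two moves. A stabilization leaves $M_L$ unchanged (so one takes $h = \id$) and replaces $W_L$ by a boundary connected sum with a punctured $\pm\mathbb{CP}^2$, which preserves $\pi _1 W_L$ (so $f = \id$); the closed $4$-manifold $W = W_L \cup _\partial (-W_{L'})$ then gains a $\pm\mathbb{CP}^2$ summand whose fundamental class pushes forward to $0 \in H_4(\pi _1 W)$ since $\mathbb{CP}^2$ is simply connected. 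A handle-slide leaves $M_L$ and $W_L$ unchanged up to diffeomorphism, so all the data is preserved; commutativity of Diagram \eqref{d1} follows in each case from the naturality of the inclusions.

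For (ii)$\Rightarrow$(i), I would form the closed oriented $4$-manifold $W = W_L \cup _\partial (-W_{L'})$ via $h$, observe that the pushout Diagram \eqref{e13} made commutative by $f$ identifies $\pi _1 W \cong \pi _1 M / N_L$, and then exploit $\eta (M,L,L',h) = 0 \in H_4(\pi _1 W)$ to construct a $5$-dimensional bordism $X$ from $W_L$ to $W_{L'}$, rel $M \cup (\partial M \times I)$ on one side and rel $h$ on the other. Arranging $X$ to have only handles of index $1$ and $2$ (after turning $3$- and $4$-handles upside down) and translating these handles into $4$-dimensional surgeries on a collar of $\partial W_L$, one obtains the desired sequence of stabilizations (from $1$-handles of $X$) and handle-slides (from $2$-handles of $X$) converting $L$ into $L'$.

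The main obstacle is twofold. First, one must bridge the gap between the purely homological condition $\eta = 0$ and the existence of a $5$-manifold $X$ with the required handle structure; the standard workaround is to first kill $\pi _1 W$ by interior surgery on $W$, which corresponds to additional stabilizations of both $L$ and $L'$ paired with appropriate handle-slides, after which Fenn and Rourke's closed simply connected case applies directly. Second, one must ensure that every intermediate move is supported away from a collar of $\partial M \times I$ in $W$, so that the identification $\partial M_L \cong \partial M_{L'}$, the basepoints $p_1,\dots,p_t$, and the framing arcs $\gamma _1,\dots,\gamma _t$ are all preserved throughout the construction. This is exactly where the full force of the commutativity of Diagram \eqref{d1}, rather than a mere isomorphism of fundamental groups at a single basepoint, is needed; the details of this boundary-preserving argument are supplied in \cite{HW}.
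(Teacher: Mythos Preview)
The paper does not prove Theorem~\ref{r7}; it is quoted verbatim as \cite[Theorem~2.2]{HW}, and the only argument the present paper supplies is the short remark immediately following the statement, explaining that the groupoid formulation here is equivalent to the one in \cite{HW} (fundamental groupoids $\Pi(M,P)$ in place of the sets $\pi_1(M;p_1,p_k)$, and $\pi_1W$ in place of $\pi_1W_L$). There is nothing further to compare against.

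Your sketch goes well beyond what this paper provides and is broadly in the spirit of the Fenn--Rourke argument as extended in \cite{HW}. One point to be careful about: your phrase ``kill $\pi_1W$ by interior surgery on $W$, which corresponds to additional stabilizations of both $L$ and $L'$'' is not accurate. Surgery on an essential circle in $W$ corresponds to a $K_3$--move, not to a stabilization, and $K_3$--moves are precisely what $\delta$--equivalence does \emph{not} allow. The actual route in \cite{FR,HW} is to use the hypothesis $\eta=0$ together with the isomorphism $\Omega_4(K(G,1))\cong H_4(G)\oplus\Z$ (signature) to conclude, after adjusting the signature by genuine stabilizations, that $W$ bounds a compact oriented $5$--manifold over $K(\pi_1W,1)$; one then takes a relative handle decomposition of that $5$--manifold and reads off stabilizations and handle-slides from the $2$-- and $3$--handles after trading. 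Since you ultimately defer the details to \cite{HW} anyway, this does not break your proposal, but the intermediate justification you give for that step is the wrong one.
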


Note that the statement of Theorem \ref{r7} is slightly different from
\cite[Theorem 2.2]{HW} in the following points:
\begin{itemize}
\item We use the fundamental groupoid $\Pi (M,P)$ etc. instead of
  $\pi _1(M;p_1,p_k)$ for $k=1,\ldots ,t$.
\item We use $\pi _1W$ instead of $\pi _1W_L(\cong\pi _1W)$.
\end{itemize}
These differences are not essential, and one can easily check that
Theorem \ref{r7} is equivalent to \cite[Theorem 2.2]{HW}.

\begin{remark}
  \label{r11}
  To the authors' knowledge, in the literature there have been no
  examples of the data in \cite[Theorem 6]{FR} and Theorem \ref{r7}
  with non-zero homology class in $H_4(\pi_1W)$.  In Section
  \ref{sec5}, we construct such examples for any $\alpha\in
  H_4(\pi_1W)$, see Proposition \ref{r18}.
\end{remark}

\section{$N$--links}
\label{sec3}

In this section, we give a modification of Theorem
\ref{r7} which will be useful in many situations.

We fix a normal subgroup $N$ of $\pi _1M$.
Let
\begin{gather*}
  q\co \pi _1M\twoheadrightarrow \pi _1M/N
\end{gather*}
denote the projection, which naturally extends to a full functor
\begin{gather*}
  q\co \Pi (M,P)\twoheadrightarrow \Pi (M,P)/N.
\end{gather*}

\subsection{$N$--links and surgery}
\label{sec:n-links-surgery}

A framed link $L$ in $M$ is called an {\em $N$--link} in $M$ if
$N_L\subset N$, i.e., if the homotopy class of each component of $L$ is in
$N$.

For an $N$--link $L$ in $M$, consider the following diagram
\begin{gather}
\label{e2}
  \xymatrix{
    \pi _1M_L \ar@{->>}[r]^{i'_*}\ar@{->>}[rrd]_{q_L}
    & \pi _1W_L\ar@{->>}[dr]^{\bq_L} &
    \pi _1M\ar@{->>}[l]_{i_*} \ar@{->>}[d]_{q}\\
    &&\pi _1M/N .
  }
\end{gather}
Since $N_L\subset N$,
there is a unique surjective homomorphism $\bq_L$ such that
$q=\bq_Li_*$.   We set $q_L:=\bq_L i'_*$.
Diagram \eqref{e2} naturally extends to a commutative diagram in groupoids
\begin{gather}
  \label{e8}
  \xymatrix{
    \Pi (M_L,P_L) \ar@{->>}[r]^{i'_*}\ar@{->>}[rrd]_{q_L}
    & \Pi (W_L,P)\ar@{->>}[dr]^{\bq_L} &
    \Pi (M,P)\ar@{->>}[l]_{i_*} \ar@{->>}[d]_{q}\\
    &&\Pi (M,P)/N .
  }
\end{gather}

Suppose that $L$ and $L'$ are $N$--links in $M$ and $h\co M_L\ct M_{L'}$
is a diffeomorphism restricting to the identification map
$\partial M_L\cong\partial M_{L'}$ such that the following diagram commutes.
\begin{equation}
  \label{e4}
  \xymatrix{
    \Pi (M_L,P_L)\ar@{>>}[dr]_{q_L} \ar@{>}[rr]_{\cong}^{h_*}
    && \Pi (M_{L'},P_{L'})\ar@{>>}[dl]^{q_{L'}}\\
    &\Pi (M,P)/N.
  }
\end{equation}

\begin{lemma}
  \label{r16}
  In the above situation,
  there is a unique functor
  \begin{gather*}
    g\co \Pi (W,P)\twoheadrightarrow\Pi (M,P)/N
  \end{gather*}
  such that $\bq_L=gj_*$ and $\bq_{L'}=gj'_*$.
\end{lemma}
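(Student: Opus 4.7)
The plan is to construct $g$ via the groupoid version of the Van Kampen theorem applied to the decomposition $W=W_L\cup_{\partial W_L}(-W_{L'})$, where the gluing identifies $M_L\subset\partial W_L$ with $M_{L'}\subset\partial W_{L'}$ via $h$. Since $\partial W_L$ is connected and contains $P$ (via $\partial M\subset\partial W_L$), a standard groupoid form of the Van Kampen theorem presents $\Pi(W,P)$ as the pushout of $\Pi(W_L,P)$ and $\Pi(W_{L'},P)$ over $\Pi(\partial W_L,P)$. By the universal property of this pushout, producing $g$ reduces to exhibiting functors out of $\Pi(W_L,P)$ and $\Pi(W_{L'},P)$ to $\Pi(M,P)/N$ that agree on $\Pi(\partial W_L,P)$; uniqueness is then automatic, because $j_*$ and $j'_*$ jointly surject onto every morphism of $\Pi(W,P)$.

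The substance of the argument is therefore to verify that $\bq_L$ and $\bq_{L'}$ agree on $\Pi(\partial W_L,P)$. I would decompose
\begin{gather*}
  \partial W_L=M\cup_{\partial M}(\partial M\times I)\cup_{\partial M_L}M_L
\end{gather*}
and check each piece. On $\Pi(M,P)$ the two composites $\bq_L\circ i_*$ and $\bq_{L'}\circ i_*$ both equal $q$ by Diagram \eqref{e8} applied to $L$ and to $L'$. On $\Pi(M_L,P_L)$, identified with $\Pi(M_{L'},P_{L'})$ through $h_*$, one has $\bq_L\circ i'_*=q_L$ by \eqref{e8}, while $\bq_{L'}\circ i'_*\circ h_*=q_{L'}\circ h_*=q_L$ by \eqref{e8} together with the hypothesis \eqref{e4}. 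On the cylindrical piece $\partial M\times I$, the arcs $p_k\times I$ are precisely the arcs used at the groupoid level to set up the isomorphism $\Pi(W_L,P_L)\cong\Pi(W_L,P)$ in the definition of $i'_*$ (and its $L'$-counterpart), so compatibility on this piece is automatic.

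The only technicality, and the place requiring mild care, is the formal invocation of the groupoid Van Kampen theorem for the closed pieces $W_L$ and $-W_{L'}$ inside $W$. This is handled routinely by thickening each to an open neighborhood which deformation retracts onto it, with intersection an open collar of $\partial W_L$, producing the connected open cover with $P$-containing connected intersection required by Brown's version of the theorem. Once this is in place, the construction of $g$ is a direct diagram chase from the three compatibility checks above, and yields both existence and uniqueness of $g$ with $\bq_L=g\circ j_*$ and $\bq_{L'}=g\circ j'_*$.
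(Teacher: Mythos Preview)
Your proposal is correct and follows essentially the same approach as the paper: both arguments invoke the groupoid Van Kampen theorem to present $\Pi(W,P)$ as the pushout of $\Pi(W_L,P)$ and $\Pi(W_{L'},P)$ over $\Pi(\partial W_L,P)$, and then verify the compatibility condition $\bq_L u_*=\bq_{L'}u'_*$ by checking it on the images of $\Pi(M,P)$ and $\Pi(M_L,P_L)$ (which generate $\Pi(\partial W_L,P)$) using diagrams \eqref{e8} and \eqref{e4}. The paper's write-up differs only cosmetically, organizing the verification around the maps $k_*$ and $v_*$ in a single large diagram rather than speaking of ``pieces'' of $\partial W_L$.
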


\begin{proof}
  Consider the following diagram.
  \begin{gather}
    \label{e16}
    \xymatrix{
      \Pi (M_L,P_L)
      \ar@/^1pc/[rr]^{h_*}
      \ar[dr]^{v_*}
      \ar[dd]_{(i')_*^{L}}
      \ar `l[d] +/l3.5pc/`[ddddr][ddddr]^{q_L}
      &
      \Pi (M,P)
      \ar[d]_{k_*}
      \ar[ddl]_{i_*^L}
      \ar[ddr]^{i_*^{L'}}
      &
      \Pi (M_{L'},P_{L'})
      \ar[dl]_{v'_*}
      \ar[dd]^{(i')_*^{L'}}
      \ar `r[d] +/r3.5pc/`[ddddl][ddddl]_{q_{L'}}
      \\
      &
      \Pi (\partial W_L,P)
      \ar[dl]^{u_*}
      \ar[dr]_{u_*'}
      &
      \\
      \Pi (W_L,P)
      \ar[dr]^{j_*}
      \ar[ddr]_{\bq_L}
      &&
      \Pi (W_{L'},P)
      \ar[dl]_{j'_*}
      \ar[ddl]^{\bq_{L'}}
      \\
      &
      \Pi (W,P)
      \ar@{.>}[d]_{g}
      &
      \\
      &
      \Pi (M,P)/N
      &
    }
  \end{gather}
  The arrows above $\Pi (W,P)$ are induced by a commutative diagram of
  inclusions of submanifolds of $W$, and hence form a commutative subdiagram.
  The middle diamond $j_*u_*=j'_*u'_*$ is a pushout.
  Therefore, to prove existence of $g\co \Pi (W,P)\rightarrow \Pi (M,P)/N$ which makes the above
  diagram commute (i.e., $gj_*=\bq_L$ and $gj'_*=\bq_{L'}$), it
  suffices to prove that $\bq_Lu_*=\bq_{L'}u'_*$.  Since the groupoid
  $\Pi (\partial W_L,P)$ is generated by the images of $k_*$ and $v_*$, it
  suffices to check that
  \begin{gather*}
    \bq_Lu_*k_*=\bq_{L'}u'_*k_*,\quad
    \bq_Lu_*v_*=\bq_{L'}u'_*v_*
  \end{gather*}
  Indeed, we have
  \begin{gather*}
    \bq_Lu_*k_*
    =\bq_Li_*^L
    =\bq_{L'}i_*^{L'}
    =\bq_{L'}u'_*k_*,
  \end{gather*}
  and
  \begin{gather*}
    \bq_Lu_*v_*
    =\bq_L(i')^{L}_*=q_L=q_{L'}h_*
    =\bq_{L'}(i')^{L'}_*h_*
    =\bq_{L'}u'_*v'_*h_*
    =\bq_{L'}u'_*v_*.
  \end{gather*}
\end{proof}

By Lemma \ref{r16}, there is a surjective homomorphism
\begin{gather*}
  g\co \pi _1W\twoheadrightarrow\pi _1M/N.
\end{gather*}
Let $K(\pi _1M/N,1)$ be obtained from $K(\pi _1W,1)$ by attaching cells of
dimension $\ge 2$.
Let
\begin{gather*}
  \rho _{W,N}\co W \rightarrow K(\pi _1M/N,1)
\end{gather*}
be the inclusion map.  Now, define a homology class
\begin{gather*}
  \eta _{\pi _1M/N}(M,L,L',h)=(\rho _{W,N})_*([W])\in H_4(\pi _1M/N).
\end{gather*}

\subsection{$K_3(N)$--moves}
\label{sec:n-moves}

A {\em $K_3(N)$--move} on a framed link in $M$ is a $K_3$--move
$L\leftrightarrow L\cup K\cup K'$ as in Figure \ref{K3move}, where
the homotopy class of $K$ is contained in $N$.  Note that a
$K_3(N)$--move on an $N$--link produces another $N$--link.

The {\em $\delta (N)$--equivalence} on $N$--links in $M$ is defined as the
equivalence relation generated by stabilizations, handle-slides and
$K_3(N)$--moves.

Suppose that $L$ and $L\cup K\cup K'$ are related by a $K_3(N)$--move as
above.  Let $V$ be a tubular neighborhood of $K$ in $M\setminus L$ containing
$K'$ in the interior.  Then there is a diffeomorphism $h\co M_L\cong
M_{L\cup K\cup K'}$ restricting to the identity on $M\setminus \opint V$.  Such an
$h$ is unique up to isotopy relative to $M\setminus\opint V$.  The
$4$--manifold $W_{L\cup K\cup K'}$ is diffeomorphic to the $4$--manifold obtained
from $W_L$ by surgery along the framed knot
\begin{gather*}
  \tilde{K}:=K\times \{1/2\}\subset M\times [0,1]\subset W_L\left(=M\times [0,1]\cup (\text{$2$--handles})\right),
\end{gather*}
where the framing of $\tilde{K}$ is determined by that of $K$.
(This fact follows since $(M\times[0,1])_{\tilde{K}}$ is diffeomorphic
to
the relative double $D(W_K)=W_K\cup_{M_K} (-W_K)$ of the $4$--manifold $W_K$, and
the $2$--handle in $W_K$ induces the dual $2$--handle in $D(W_K)$,
which is attached along a $0$--framed meridian to $K$, i.e., $K'$.)
Thus, there is a natural surjective homomorphism
\begin{equation*}
  \theta \co \pi _1W_L\twoheadrightarrow\pi _1W_{L\cup K\cup K'}
\end{equation*}
with kernel normally generated by the homotopy class of $\tilde{K}$.
We have a cobordism
\begin{gather*}
  X:=(W_L\times [0,1])\cup (\text{a $2$--handle attached along $\tilde{K}\times \{1\}$})
\end{gather*}
between $W_L$ and $(W_L)_{\tilde{K}}\cong W_{L\cup K\cup K'}$.  This
cobordism $X$ is over $K(\pi _1M/N,1)$ since $\tilde{K}$ maps to a
null-homotopic loop in $K(\pi _1M/N,1)$.

The homomorphism $\theta$ extends in a natural way to a full,
identity-on-objects functor
\begin{equation*}
  \theta \co \Pi (W_L,P)\twoheadrightarrow\Pi (W_{L\cup K\cup K'},P).
\end{equation*}

\begin{lemma}
  \label{r15}
  In the above situation with $L':=L\cup K\cup K'$, Diagram \eqref{e4}
  commutes and we have
  \begin{gather}
    \label{e17}
    \eta _{\pi _1M/N}(M,L,L',h)=0\in H_4(\pi _1M/N).
  \end{gather}
\end{lemma}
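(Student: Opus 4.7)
The plan is to extract both statements from the $5$--dimensional cobordism $X$ introduced just before the lemma, together with the surjection $\theta$. The crucial fact is that $\tK=K\times\{1/2\}$ is freely homotopic in $W_L$ to $K\subset M$ via $K\times[0,1/2]$, so $[\tK]=i_*[K]\in\pi_1 W_L$ lies in $\ker\bq_L$ since $[K]\in N$. Because $\ker\theta$ is normally generated by $[\tK]$, the functor $\bq_L$ descends along $\theta$ to some $\bq_{L'}'\co\Pi(W_{L'},P)\twoheadrightarrow\Pi(M,P)/N$ with $\bq_{L'}'\circ\theta=\bq_L$. The uniqueness of the surjection in \eqref{e8}, applied to $L'$, together with the identity $i_*^{L'}=\theta\circ i_*^L$ (both realized by the inclusion $M\hookrightarrow X$), then identifies $\bq_{L'}'$ with $\bq_{L'}$.

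To verify that Diagram \eqref{e4} commutes, I would work inside $X$ and use its boundary structure. The cylinder $\partial W_L\times[0,1]\subset\partial X$ contains $M_L\times[0,1]$, whose top face is identified with $M_{L'}$ via $h$. Hence the two composites $M_L\hookrightarrow W_L\hookrightarrow X$ and $M_L\xto{h}M_{L'}\hookrightarrow W_{L'}\hookrightarrow X$ are joined by a canonical homotopy through that cylinder. Tracing this through the arcs $\gamma_k$ yields the groupoid equality $\theta\circ(i')^L_*=(i')^{L'}_*\circ h_*$, and composing with $\bq_{L'}$ gives $q_L=q_{L'}\circ h_*$ by the factorization of the first paragraph.

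For the vanishing $\eta_{\pi_1M/N}(M,L,L',h)=0$, the same cobordism $X$ supplies the null-bordism. Collapsing the collar $\partial W_L\times[0,1]$ along the canonical gluing (identity on $M\cup\partial M\times[0,1]$ and equal to $h$ on the $M_L$--part) identifies $\partial X$ with $W=W_L\cup_\partial(-W_{L'})$. Lemma \ref{r16} produces $g\co\pi_1 W\to\pi_1M/N$; since $g|_{W_L}=\bq_L$ kills $[\tK]$, this map factors through $\pi_1X=\pi_1W_L/\langle[\tK]\rangle$. By the defining property of $K(\pi_1M/N,1)$, the classifying map $\rho_{W,N}$ therefore extends over $X$, exhibiting $[W]$ as a boundary in $K(\pi_1M/N,1)$ and giving $(\rho_{W,N})_*[W]=0$.

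The main technical obstacle is bookkeeping: matching the basepoint arcs $\gamma_k$ across the cobordism so that the groupoid-level identities such as $\theta\circ(i')^L_*=(i')^{L'}_*\circ h_*$ hold on the nose rather than only up to conjugation, and verifying that the collar collapse identifies $\partial X$ with $W$ equipped with the correct orientation, so that the fundamental class of $W$ is precisely the boundary of $[X]$.
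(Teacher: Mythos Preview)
Your proposal is correct and follows essentially the same approach as the paper: both establish $\bq_L=\bq_{L'}\circ\theta$ and $(i')^{L'}_*\circ h_*=\theta\circ(i')^L_*$ to obtain commutativity of \eqref{e4}, and both use the $5$--cobordism $X$ over $K(\pi_1M/N,1)$ for the vanishing of $\eta$. The only difference is organizational: the paper records the needed identities in diagram \eqref{e12} and simply notes that $X$ is already a cobordism over $K(\pi_1M/N,1)$ (since $\tK$ maps to a null-homotopic loop), whereas you spell out the extension of $\rho_{W,N}$ by invoking Lemma~\ref{r16} and factoring $g$ through $\pi_1X$.
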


\begin{proof}
To prove commutativity of \eqref{e4}, consider the following diagram.
\begin{equation}
  \label{e12}
  \xymatrix{
    \Pi (M_L,P_L) \ar@{>>}[d]^{(i')^L_\ast}\ar[rr]^{h_\ast}_{\cong}
	\ar@{>>} `l[d] +/l3.5pc/`[dddr][dddr]^{q_L}
    &&
    \Pi (M_{L'},P_{L'}) \ar@{>>}[d]_{(i')^{L'}_\ast}
     \ar@{>>} `r[d] +/r3.5pc/`[ddd] [dddl]_{q_{L'}}
    &\\
    \Pi (W_L,P) \ar@{>>}[rr]^{\theta }
    \ar@{>>}[ddr]_{\bq_L}
    &&
    \Pi (W_{L'},P)
    \ar@{>>}[ddl]^{\bq_{L'}}
    &\\
    &\Pi (M,P) \ar@{>>}[ul]_{i^L_*} \ar@{>>}[ur]^{i^{L'}_\ast}
    \ar@{>>}[d]_{q}
    &\\
    &
    \Pi (M,P)/N
    &
  }
  \end{equation}
We have
\begin{equation*}
  q_{L'}h_*
  =\bq_{L'}(i')^{L'}_*h_*
  =\bq_{L'}\theta (i')^L_*
  =\bq_L(i')^L_*
  =q_L.
\end{equation*}
Here we have $\bq_{L'}\theta =\bq_L$ since
we have
\begin{equation*}
  \bq_{L'}\theta i^L_*
  =\bq_{L'}i^{L'}_*
  =q
  =\bq_L i^L_*
\end{equation*}
and the functor $i^L_*$ is full and the identity on objects.

Now, we will prove \eqref{e17}.
As we have observed, $W_L$ and $W_{L'}=W_{L\cup K\cup K'}$ are cobordant over
$K(\pi _1M/N,1)$.
Hence we have
\begin{gather*}
  \eta _{\pi _1M/N}(M,L,L',h)=(\rho _{W,N})_*([W])=0.
\end{gather*}
\end{proof}

\subsection{Characterization of $\delta (N)$--equivalence}
\label{sec:char-n-equiv}

We have the following characterization of the $\delta (N)$--equivalence.

\begin{theorem}
  \label{t2}
  Let $M$ be a compact, connected, oriented $3$--manifold with
  non-empty boundary.  Let $P\subset \partial M$ contain exactly one
  point of each connected component of $\partial M$.  Let $N$ be a
  normal subgroup of $\pi _1M$.  Let $L$ and $L'$ be $N$--links in
  $M$.  Then the following conditions are equivalent.
  \begin{enumerate}
  \item $L$ and $L'$ are $\delta (N)$--equivalent.
  \item There is a diffeomorphism $h\co M_L\cong M_{L'}$
    restricting to the identification map $\partial M_L\cong\partial M_{L'}$ such that
    Diagram \eqref{e4} commutes and we have
    \begin{gather}
      \label{e9}
      \eta _{\pi _1M/N}(M,L,L',h)=0\in H_4(\pi _1M/N).
    \end{gather}
  \end{enumerate}
\end{theorem}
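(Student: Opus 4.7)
For (i) $\Rightarrow$ (ii), the plan is to induct on the number of moves relating $L$ to $L'$, reducing each step to either Theorem \ref{r7} or Lemma \ref{r15}. A single stabilization or handle-slide does not change $N_L$, so Theorem \ref{r7} supplies $h$ together with a groupoid isomorphism $f$ making Diagram \eqref{d1} commute, with $\eta(M, L, L', h) = 0$ in $H_4(\pi_1 W)$; composing every vertex of \eqref{d1} with the natural projection $\pi_1 W \cong \pi_1 M/N_L \onto \pi_1 M/N$ yields commutativity of \eqref{e4} and sends $\eta$ to $\eta_{\pi_1 M/N} = 0$. A single $K_3(N)$-move is Lemma \ref{r15}. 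To chain two steps $L\to L_1 \to L'$ with diffeomorphisms $h_1,h_2$, the closed $4$-manifold $W_{M,L,L',h_2h_1}$ is $\pi_1 M/N$-bordant (via a cobordism containing $W_{L_1}$ as a separating hypersurface) to the disjoint union $W_{M,L,L_1,h_1}\sqcup W_{M,L_1,L',h_2}$, so $\eta_{\pi_1 M/N}$ is additive, and commutativity of \eqref{e4} composes directly.

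For (ii) $\Rightarrow$ (i), the strategy is to enlarge both $L$ and $L'$ by $K_3(N)$-moves until Theorem \ref{r7} applies directly. Choose elements $\alpha_1,\ldots,\alpha_r\in N$ normally generating $N$, represent them by disjoint framed simple closed curves $K_i$ in $M\setminus(L\cup L')$, and give each $K_i$ a small $0$-framed meridian $K_i'$. The corresponding $K_3(N)$-moves applied to both $L$ and $L'$ produce $\tilde L = L\cup\bigsqcup_i(K_i\cup K_i')$ and $\tilde L' = L'\cup\bigsqcup_i(K_i\cup K_i')$ satisfying $N_{\tilde L}=N_{\tilde L'}=N$, so $\bq_{\tilde L}$ and $\bq_{\tilde L'}$ become isomorphisms from $\pi_1 W_{\tilde L},\pi_1 W_{\tilde L'}$ to $\pi_1 M/N$. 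Let $\sigma\co M_L \ct M_{\tilde L}$ and $\sigma'\co M_{L'} \ct M_{\tilde L'}$ denote the canonical diffeomorphisms coming from the $K_3$-moves, and set $\tilde h := \sigma'\circ h\circ\sigma^{-1}$.

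To apply Theorem \ref{r7} to $(\tilde L,\tilde L',\tilde h)$, take $f\co \Pi(W_{\tilde L},P)\ct \Pi(W_{\tilde L'},P)$ to be the composite of the two identifications with $\Pi(M,P)/N$ provided by $\bq_{\tilde L}$ and $\bq_{\tilde L'}$. The triangles in Diagram \eqref{d1} commute automatically; the square reduces to $q_{\tilde L'}\tilde h_*=q_{\tilde L}$, which via the identities $q_{\tilde L}\sigma_*=q_L$ and $q_{\tilde L'}\sigma'_*=q_{L'}$ (consequences of the fact, used in the proof of Lemma \ref{r15}, that the cobordism from $W_L$ to $W_{\tilde L}$ is over $K(\pi_1 M/N,1)$) reduces precisely to the hypothesis $q_{L'}h_*=q_L$. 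Gluing these auxiliary cobordisms along $\tilde h$ further exhibits $W = W_L\cup -W_{L'}$ and $\tilde W = W_{\tilde L}\cup -W_{\tilde L'}$ as bordant over $K(\pi_1 M/N,1)$; since $\pi_1\tilde W \cong \pi_1 M/N$, this forces $\eta(M,\tilde L,\tilde L',\tilde h) = \eta_{\pi_1 M/N}(M,L,L',h) = 0$. Theorem \ref{r7} then yields that $\tilde L$ and $\tilde L'$ are $\delta$-equivalent, and combining with the $K_3(N)$-moves connecting $L,L'$ to $\tilde L,\tilde L'$ gives that $L$ and $L'$ are $\delta(N)$-equivalent.

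The main obstacle is the bordism comparison: one must verify carefully that the auxiliary $K(\pi_1 M/N,1)$-cobordisms produced by the $K_3(N)$-moves on the two sides patch together along the boundary identification determined by $\tilde h$ to give a legitimate $K(\pi_1 M/N,1)$-bordism between $W$ and $\tilde W$. A secondary technical point is the groupoid bookkeeping required to check that $f$ behaves correctly under composition with $\sigma$ and $\sigma'$; all the relevant diagrams must be verified to commute at the level of fundamental groupoids rather than just fundamental groups.
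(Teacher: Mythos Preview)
Your argument for (i) $\Rightarrow$ (ii) is correct and matches the paper's approach; the paper simply cites Lemma \ref{r15} and the ``only if'' part of Theorem \ref{r7}, leaving the additivity of $\eta_{\pi_1M/N}$ under composition of moves implicit, while you spell it out.

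For (ii) $\Rightarrow$ (i), your reduction to Theorem \ref{r7} via $\tilde L=L\cup\bigsqcup(K_i\cup K_i')$ is exactly the paper's strategy, and the groupoid and bordism verifications you flag as ``main obstacle'' and ``secondary technical point'' are precisely what the paper carries out in its big diagram \eqref{e26}. However, there is a genuine gap: you write ``choose elements $\alpha_1,\dots,\alpha_r\in N$ normally generating $N$'', which silently assumes that $N$ is normally finitely generated in $\pi_1M$. The theorem is stated for an \emph{arbitrary} normal subgroup $N$, and normal subgroups of finitely generated groups need not be normally finitely generated. Without this finiteness you cannot realize $N$ as $N_K$ for a framed link $K$, so $\bq_{\tilde L}$ will never become an isomorphism and Theorem \ref{r7} cannot be invoked.

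The paper handles this by a separate direct-limit argument: starting from the normally finitely generated subgroup $N_0$ generated by $N_L\cup N_{L'}$, it enlarges $N_0$ to a normally finitely generated $N_1\subset N$ large enough that the analogue of Diagram \eqref{e4} already commutes over $\pi_1M/N_1$ (using finite generation of $\pi_1M_L$), and then, using that group homology commutes with direct limits, finds a further normally finitely generated $N_2$ with $N_1\subset N_2\subset N$ over which the class $\eta_{\pi_1M/N_2}(M,L,L',h)$ already vanishes. Your argument then applies verbatim with $N$ replaced by $N_2$, giving $\delta(N_2)$-equivalence and hence $\delta(N)$-equivalence. You should add this reduction step.
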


\begin{proof}[Proof Theorem \ref{t2}, ``only if'' part]
This part follows from Lemma \ref{r15} and the ``only if''
part of Theorem \ref{r7}.
\end{proof}

For the ``if'' part, we first consider the case where $N$ is normally
finitely generated in $\pi _1M$.

\begin{proof}[Proof of Theorem \ref{t2}, ``if'' part with $N$ normally finitely generated in $\pi _1M$]
  By the assumption, there is a framed link $K=K_1\cup \dots \cup K_k$ in $M$
  disjoint from both $L$ and $L'$ such that $N_K=N$.
  Let $K^*=K_1^*\cup \dots \cup K_k^*$ be a framed link in $M$ consisting of
  small $0$--framed meridians $K_j^*$ to
  $K_j$.  Thus $L$ and $\tL:=L\cup K\cup K^*$ (resp. $L'$ and
  $\tL':=L'\cup K\cup K^*$) are related by $k$ $K_3(N)$--moves.  We have
  $N=N_{\tL}=N_{\tL'}$.

  It suffices to prove that $\tilde L$ and $\tilde L'$ are
  $\delta $--equivalent.  Consider the following diagram.

  \begin{equation}
    \label{e26}
  \xymatrix@C=3.8mm{
    \Pi (M_L,P_L) \ar[r]^{m_*}_\cong
    \ar@{>>}[d]^{(i')^L_*} \ar@<1.5ex>@/^1.5pc/[rrrr]^{h_*}_\cong
   	\ar@{>>} `l[d] +`[dddrr][dddrr]^{q_L}
    &
    \Pi (M_{\tL},P_{\tL})
      \ar[rr]^{\tilde{h}_*=(m'hm^{-1})_*}_\cong
      \ar@{>>}[d]^{(i')^{\tL}_*}
      &
      &
      \Pi (M_{\tL'},P_{\tL'})
      \ar@{>>}[d]_{(i')^{\tL'}_*}
      &
      \Pi (M_{L'},P_{L'})
      \ar[l]_{m'_*}^\cong
      \ar@{>>}[d]_{(i')^{L'}_*}
     \ar@{>>} `r[d] +/r3pc/`[ddd] [dddll]_{q_{L'}}
      \\
      \Pi (W_L,P)
      \ar@{>>}[r]^{\theta }
      \ar@{>>}[rrdd]_{\bq_L}
      &
      \Pi (W_{\tL},P)
      \ar[rr]^{f:=\bq_{\tL'}^{-1}\bq_{\tL}}_\cong
      \ar[rdd]_{\bq_{\tL}}^\cong
      & &
      \Pi (W_{\tL'},P)
      \ar[ldd]^{\bq_{\tL'}}_\cong
      &
      \Pi (W_{L'},P)
      \ar@{>>}[l]_{\theta '}
      \ar@{>>}[lldd]^{\bq_{L'}}
      \\
      &
      &
      \Pi (M,P)
      \ar@{>>}[ul]_{i_*^{\tL}}
      \ar@{>>}[ur]^{i_*^{\tL'}}
      \ar@{>>}[d]_q
      &
      &
      \\
      &
      &
      \Pi (M,P)/N
      &
      &
      }
  \end{equation}

  Here $m\co M_L\ct M_{\tL}$ and $m'\co M_{L'}\ct M_{\tL'}$ are
  natural diffeomorphisms, and we set
  $\tilde{h}=m'hm^{-1}\co M_{\tL}\ct M_{\tL'}$.  All the faces
  except the middle square commute.  Since the outermost triangle
  commutes, i.e., $q_L=q_{L'}h_*$, one can check that
  \begin{gather*}
    \bq_{\tL'}f(i')^{\tL}_*=\bq_{\tL'}(i')^{\tL'}_*\tilde{h}_*.
  \end{gather*}
  Since $\bq_{\tL'}$ is an isomorphism, the middle square commutes, i.e.,
  \begin{gather*}
    f(i')^{\tL}_*=(i')^{\tL'}_*\tilde{h}_*.
  \end{gather*}
  Thus, the whole Diagram \eqref{e26} commutes.

  Set
  \begin{gather*}
    W:=W_{M,L,L',h}= W_{L}\cup _\partial (-W_{L'}),\\
    \tilde{W}:=W_{M,\tL,\tL',\tilde{h}}= W_{\tL}\cup _\partial (-W_{\tL'}).
  \end{gather*}
  By commutativity of the middle pentagon, the homology class
  \begin{gather*}
    \eta _{\pi _1M/N}(M,\tL,\tL',\tilde{h})=(\rho _{\tilde{W}})_*([\tilde{W}])\in H_4(\pi _1M/N),
  \end{gather*}
  is defined.  We claim that $\tilde{W}$ and $W$ are bordant over
  $K(\pi _1M/N)$.  Indeed, there is an oriented, compact
  $5$--cobordism $X$ between $W$ and $\tilde{W}$ constructed as in
  Section \ref{sec:n-moves}, which maps to $K(\pi _1M/N,1)$.  Hence it
  follows that
  \begin{gather*}
    \eta_{\pi_1M/N}(M,\tilde{L},\tilde{L}',\tilde{h})
    =(\rho_{\tilde{W}})_*([\tilde{W}])
    =(\rho_{{W}})_*([\tilde{W}])
    =\eta_{\pi_1M/N}(M,{L},{L}',{h}),
  \end{gather*}
  which is $0$ by the assumption.
  Then, by Theorem \ref{r7}, it follows that $\tL$ and $\tL'$ are $\delta $--equivalent.
\end{proof}

\begin{proof}[Proof of Theorem \ref{t2}, ``if'' part (general case)]
  Let $N_0\subset N$ denote the smallest normal subgroup in $\pi _1M$
  containing $N_L\cup N_{L'}$.
  Let $$q^0\co \Pi (M,P)\onto\Pi (M,P)/N_0$$ be the projection.
  Let $$\bq^0_L\co \Pi (W_L,P)\onto\Pi (M,P)/N_0$$ be the functor such
  that $q^0=\bq^0_L i^L_*$.  Set
  $$q^0_L=\bq^0_L(i')^L_*\co \Pi (M_L,P_L)\onto\Pi (M,P)/N_0.$$
  Similarly, define $$\bq^0_{L'}\co \Pi (W_{L'},P)\onto\Pi (M,P)/N_0$$ and
  $$q^0_{L}\co \Pi (M_{L'},P_{L'})\onto\Pi (M,P)/N_0.$$

  Let $\bar{N}_1\subset \pi _1M/N_0$ be the normal subgroup generated by
  the elements
  \begin{gather*}
    q^0_L(a)^{-1}\cdot q^0_{L'}(h_*(a))
  \end{gather*}
  for $a\in \pi _1M_L$.  By $q_L=q_{L'}h_*$, it follows that $\bar{N}_1\subset N/N_0$.
  Since $\pi _1M_L$ is finitely generated, it follows
  that $\bar{N}_1$ is normally finitely generated in $\pi _1M/N_0$.
  Set
  \begin{gather*}
    N_1=(q^0)^{-1}(\bar{N}_1)\subset N,
  \end{gather*}
  which is normally finitely generated in $\pi _1M$.

  Let $p_{N_0,N_1}\co \Pi (M,P)/N_0\onto\Pi (M,P)/N_1$ be the projection.
  Set
  \begin{gather*}
    q^1_L=p_{N_0,N_1}q^0_L\co \Pi (M_L,P_L)\onto\Pi (M,P)/N_1,\\
    q^1_{L'}=p_{N_0,N_1}q^0_{L'}\co \Pi (M_{L'},P_{L'})\onto\Pi (M,P)/N_1.
  \end{gather*}

  We have $q^1_L=q^1_{L'}h_*$.  Hence we have a well-defined homology
  class
  \begin{gather*}
    \eta _{\pi _1M/N_1}(M,L,L',h)\in H_4(\pi _1M/N_1).
  \end{gather*}
  Since $N$ is a union of normally finitely generated subgroups of
  $\pi _1M$ and homology preserves direct limits, it follows that there
  is a normally finitely generated subgroup $N_2$ of $\pi _1M$ such that
  $N_1\subset N_2\subset N$ and
  \begin{gather*}
    (p_{N_1,N_2})_*(\eta _{\pi _1M/N_1}(M,L,L',h))=\eta _{\pi _1M/N_2}(M,L,L',h)=0\in H_4(\pi _1M/N_2),
  \end{gather*}
  where $p_{N_1,N_2}\co \Pi (M,P)/N_1\onto\Pi (M,P)/N_2$ is the projection.
  The following triangle commutes
  \begin{gather*}
    \xymatrix{
      \Pi (M_L,P_L)
      \ar[rr]_{\cong}^{h_*}
      \ar@{>>}[rd]_{q^2_{L}}
      &&
      \Pi (M_{L'},P_{L'})
      \ar@{>>}[ld]^{q^2_{L'}}
      \\
      &
      \Pi (M,P)/N_2,
      }
  \end{gather*}
  where $q^2_L=p_{N_1,N_2}q^1_L$ and $q^2_{L'}=p_{N_1,N_2}q^1_{L'}$.
  Now we can apply the above-proved case of the theorem to deduce that $L$ and $L'$
  are $\delta (N_2)$--equivalent.  Hence they are $\delta (N)$--equivalent.
\end{proof}

\section{Manifolds over $K(G,1)$}
\label{sec4}

\subsection{Bordism groups}
\label{sec:bordism-groups}

Fix a group $G$.  Let $K(G,1)$ denote the Eilenberg--Mac Lane space.

By an {\em $n$--manifold over $K(G,1)$} or {\em $G$--$n$--manifold} we
mean a pair $(M,\rho _M)$ of a compact, oriented, smooth $n$--manifold $M$
and a map $\rho _M\co M\rightarrow K(G,1)$.  Here we require no condition about the
basepoints even when $M$ has a specified basepoint.  A
$G$--$n$--manifold $(M,\rho _M)$ will often be simply denoted by $M$.

For $n\ge 0$, let $\Omega _n(G)=\Omega _n(K(G,1))$ denote the $n$--dimensional
oriented bordism group of $K(G,1)$, which is defined to be the set of
bordism classes of closed $G$--$n$--manifolds.

There is a natural map
\begin{gather*}
  \theta _n\co \Omega _n(G)\rightarrow H_n(G)
\end{gather*}
defined by
\begin{gather*}
  \theta _n([M,\rho _M])=(\rho _M)_*([M])\in H_n(G).
\end{gather*}
As is well known (see e.g.\ \cite[Section 2]{HK}), one can use
$\Omega_0=\Omega_4=\Z$, $\Omega_1=\Omega_2=\Omega_3=0$ and the
Atiyah--Hirzebruch spectral sequence to show that $\theta _n$ is an
isomorphism for $n=0,1,2,3$, and we have an isomorphism
\begin{gather}
  \label{e5}
  \left(\begin{matrix}
    \theta _4\\\sigma
  \end{matrix}\right)\co \Omega _4(G)\ct H_4(G) \oplus \Z,
\end{gather}
where
\begin{gather*}
  \sigma ([M,\rho _M])=\text{signature}(M)\in \Z  .
\end{gather*}

\subsection{$G$--surfaces, bordered $3$--manifolds and cobordisms}
\label{sec:g-surfaces-bordered}
By a {\em $G$--surface} we mean a closed $G$--$2$--manifold.

Let $(\Sigma ,\rho _\Sigma )$ be a $G$--surface.  A {\em
$(\Sigma ,\rho _\Sigma )$--bordered $3$--manifold} will mean a triple $(M,\rho _M,\phi _M)$
such that $(M,\rho _M)$ is a $G$--$3$--manifold and $\phi _M\co \Sigma \ct\partial M$ is an
orientation-preserving diffeomorphism satisfying
$\rho _\Sigma =(\rho _M|_{\partial M})\phi _M$.

A {\em cobordism} between $(\Sigma ,\rho _\Sigma )$--bordered $3$--manifolds
$\triM$ and $\tri{M'}$ is a triple $\tri{W}$ consisting of a
$G$--$4$--manifold $(W,\rho _W)$ and an orientation-preserving diffeomorphism
\begin{gather*}
  \phi _W\co M\cup _\Sigma (-M')\ct \partial W,
\end{gather*}
where $M\cup _\Sigma (-M')$ is the closed oriented $4$--manifold obtained by
gluing $M$ and $-M'$ along their boundaries using the diffeomorphism
$\phi _{M'}\phi _M^{-1}\co \partial M\ct \partial M'$, such that
the following diagram commutes
\begin{gather*}
  \xymatrix{
    M
    \ar[d]_{\text{incl}}
    \ar[drr]^{\rho _M}
    &&\\
    M\cup _\Sigma (-M')
    \ar[r]^-{\phi _W}
    &
    W
    \ar[r]^{\rho _W\quad \quad }
    &
    K(G,1).
    \\
    M'
    \ar[u]^{\text{incl}}
    \ar[urr]_{\rho _{M'}}
    &&
    }
\end{gather*}
We denote this situation by $\triW\co \tri{M}\rightarrow \tri{M'}$ or simply by
$W\co M\rightarrow M'$.

Two cobordisms $W,W'\co M\rightarrow M'$ between $(\Sigma ,\rho _\Sigma )$--bordered
$3$--manifolds $M=(M,\rho _M)$ and $M'=(M',\rho _{M'})$ are said to be {\em
cobordant} if there is a {\em cobordism} between them, i.e., a triple
$\tri{X}$ consisting of a $G$--$5$--manifold $X=(X,\rho _X)$ and an
orientation-preserving diffeomorphism
\begin{gather*}
  \phi _X\co W'' \ct \partial X,
\end{gather*}
where $W'':=W\cup _{M\cup _\Sigma (-M')}(-W')$ is the closed, oriented
$4$--manifold obtained from $W$ and $-W'$ by gluing along
${M\cup _\Sigma (-M')}$ using the diffeomorphism $\phi _{W'}\phi _W^{-1}\co W\rightarrow W'$,
such that the following diagram commutes
\begin{gather*}
  \xymatrix{
    W
    \ar[d]_{\text{incl}}
    \ar[drr]^{\rho _W}
    &&\\
    W''
    \ar[r]^-{\phi _X}
    &
    X
    \ar[r]^{\rho _X\quad \quad }
    &
    K(G,1).
    \\
    W'
    \ar[u]^{\text{incl}}
    \ar[urr]_{\rho _{W'}}
    &&
    }
\end{gather*}

\subsection{Cobordism groupoid $\modC =\modC _{(\Sigma ,\rho _\Sigma )}$}
\label{sec:cobordism-groupoid-=}
As in the last subsection, let $\Sigma =(\Sigma ,\rho _\Sigma )$ be a $G$--surface.

For our purpose, it is convenient to introduce the category
$\modC =\modC _{(\Sigma ,\rho _\Sigma )}$ of $\Sigma $--bordered $3$--manifolds and cobordism
classes of cobordisms between $\Sigma $--bordered $3$--manifolds, defined as
follows.

The objects in $\modC $ are $\Sigma $--bordered $3$--manifolds.  The morphisms
between two $\Sigma $--bordered $3$--manifolds $M=\triM$ and
$M'=\tri{M'}$ are the cobordism classes of cobordisms between
$M$ and $M'$.

The composition in $\modC $ is induced by the composition of cobordisms
defined below.  Two cobordisms $W\co M\rightarrow M'$ and $W'\co M'\rightarrow M''$ can be
composed in the usual way: let $W'\circ W=W'\cup _{M'}W$ be the
$4$--manifold obtained by gluing $W'$ and $W$ along $M'$ using the
maps
\begin{gather*}
  \phi _{W}|_{M'}\co M'\to W,\quad
  \phi _{W'}|_{M'}\co M'\to W'.
\end{gather*}
Let
\begin{gather*}
  \rho _{W'\circ W} = \rho _{W'}\cup \rho _W\co W'\circ W\rightarrow K(G,1),
\end{gather*}
and
\begin{gather*}
  \phi _{W'\circ W}= (\phi _{W'}|_{M''})\cup (\phi _{W}|_{M})\co M\cup _{\Sigma }(-M'')\ct
  \partial (W'\circ W).
\end{gather*}
Then we obtain a new cobordism over $K(G,1)$
\begin{gather*}
  {W'\circ W}=\tri{W'\circ W}\co \tri{M}\rightarrow \tri{M''}.
\end{gather*}

The identity morphism $1_M\co M\rightarrow M$ is represented by the ``reduced''
cylinder $C_M=\tri{C_M}$.
The $4$--manifold $C_M$ is defined by
\begin{gather}
  \label{e-reduced-cylinder}
  C_M = (M\times [0,1])/\sim,
\end{gather}
where $\sim$ is generated by $(x,t)\sim(x,t')$ for $x\in \partial M$ and
$t,t'\in [0,1]$.
The map $\rho _{C_M}\co C_M\rightarrow K(G,1)$ is induced by the composite
\begin{gather*}
  M\times [0,1]\overset{\text{proj}}{\longrightarrow} M \xto{\rho _M}K(G,1).
\end{gather*}
The map $\phi _{C_M}\co M\cup _\Sigma (-M)\rightarrow \partial C_M$ is given by
\begin{gather*}
  \phi _{C_M} = \phi _{M,\partial C_M}\cup \phi _{-M,\partial C_M},
\end{gather*}
where $\phi _{M,\partial C_M}\co M\hookrightarrow \partial C_M$ is induced by $M\cong M\times \{1\}\hookrightarrow
M\times [0,1]$, and $\phi _{-M,\partial C_M}\co (-M)\hookrightarrow \partial C_M$ is induced by $M\cong
M\times \{0\}\hookrightarrow M\times [0,1]$.

It is not difficult to check that the above definition gives a
well-defined category.

By abuse of notation, the morphism in $\modC $ represented by a cobordism
$W=\triW$ from $M$ to $M'$ is again denoted by ${W}=\tri{W}$.

The category $\modC $ is a groupoid by the same reason that $\Omega _n(G)$ is a
group.  Indeed, for a morphism ${W}=\tri{W}\co \triM\rightarrow \tri{M'}$ in
$\modC $, the inverse ${W}^{-1}$ is represented by the cobordism
\begin{gather*}
  W^{-1}:=(-W,\rho _{W^{-1}},\phi _{W^{-1}})\co \tri{M'}\rightarrow \tri{M},
\end{gather*}
where $\rho _{W^{-1}}=\rho _W\co (-W)\rightarrow K(G,1)$, and
$\phi _{W^{-1}}\co M'\cup _{\Sigma }(-M)\ct\partial (-W)$ is the composite
\begin{gather*}
  M'\cup _{\Sigma }(-M) \cong
  -(M\cup _{\Sigma }(-M))\overset{-\phi _W}{\underset{\cong}{\longrightarrow}}(-\partial W)\cong \partial (-W).
\end{gather*}
The composite $W^{-1}\circ W$ is cobordant to $C_M$ via a cobordism
$\tri{X}$.  Here the $5$--manifold $X$ is the ``partially reduced
cylinder''
\begin{gather*}
  X:=(W\times [0,1])/\sim,
\end{gather*}
where $(\phi _W(x),t)\sim(\phi _W(x),t')$ for $x\in (-M')\subset M\cup
_\partial (-M')$, $t\in [0,1]$.  The map $\rho _X\co X\rightarrow
K(G,1)$ is induced by the composite
\begin{gather*}
  W\times [0,1]\xto{\text{proj}}W \xto{\rho _W}K(G,1).
\end{gather*}
The diffeomorphism $$\phi _X\co (W^{-1}\circ W)\cup _\partial
(-C_M)\rightarrow \partial X$$ is given by
\begin{gather*}
  \phi _X(w)=(w,0)\quad \text{for $w\in W\subset W^{-1}\circ W$},\\
  \phi _X(w)=(w,1)\quad \text{for $w\in -W\subset W^{-1}\circ W$},\\
  \phi _X([x,t])=(i_{M,W},t)\quad \text{for $x\in M$, $t\in [0,1]$}.
\end{gather*}
Here $[x,t]\in C_M$ is represented by $(x,t)\in M\times [0,1]$, and
$i_{M,W}\co M\to W$ is the composite
\begin{gather*}
  M \subset  M\cup _\partial (-M')\xto{\phi _{W}}\partial W\subset W.
\end{gather*}
Similarly, $W\circ W^{-1}$ is cobordant to $C_{M'}$.  Thus $W\co M\rightarrow M'$
is an isomorphism in~$\modC $.

\subsection{$G$--diffeomorphism}
\label{sec:g-diffeomorphism}
Let $\triM$ and $\tri{M'}$ be two
$(\Sigma ,\rho _\Sigma )$--bordered $3$--manifolds.

By a {\em $G$--diffeomorphism} $$h\co \triM\ct \tri{M'}$$ we mean a
diffeomorphism $h\co M\ct M'$ such that
\begin{enumerate}
\item $h$ is compatible with the maps $\phi _M\co \Sigma \ct \partial M$ and
  $\phi _{M'}\co \Sigma \ct\partial M'$, i.e., we have $\phi _{M'}=(h|_{\partial M})\phi _M$,
\item $h$ is compatible with the maps $\rho _M\co M\rightarrow K(G,1)$ and
  $\rho _{M'}\co M'\rightarrow K(G,1)$ {\em up to homotopy relative to
  $\partial M$}, i.e., we have
  \begin{gather}
    \label{e19}
    \rho _M{\simeq}\rho _{M'}h\co M\rightarrow K(G,1)\quad (\text{rel $\partial M$}).
  \end{gather}
\end{enumerate}
In this case, $\triM$ and $\tri{M'}$ are said to be {\em
  $G$--diffeomorphic}.

We have the following characterization of $G$--diffeomorphism in terms
of fundamental groupoids.

\begin{proposition}
  \label{r21}
  Let $(\Sigma ,\rho _\Sigma )$ be a non-empty $G$--surface.
  Let $\triM$ and $\tri{M'}$ be connected $(\Sigma ,\rho _\Sigma )$--bordered
  $3$--manifolds.  Let $P_\Sigma \subset \Sigma $ be a subset containing exactly one
  point of each connected component of $\Sigma $, and set
  $P=\phi _M(P_\Sigma )\subset \partial M$ and $P'=\phi _{M'}(P_\Sigma )\subset \partial M'$.  Then the
  following conditions are equivalent.
  \begin{enumerate}
  \item $\triM$ and $\tri{M'}$ are $G$--diffeomorphic.
  \item There is a diffeomorphism $h\co M\ct M'$ compatible with the
  maps $\phi _M$ and $\phi _{M'}$ such that the following groupoid diagram
  commutes
    \begin{gather}
      \label{e22}
      \xymatrix{
	\Pi (M,P)
	\ar[rr]^{h_*}_{\cong}
	\ar[dr]_{(\rho _M)_*}
	&&
	\Pi (M',P')
	\ar[dl]^{(\rho _{M'})_*}
	\\
	&
	\Pi (K(G,1),\rho _\Sigma (P_\Sigma )).
	&
      }
    \end{gather}
  \end{enumerate}
\end{proposition}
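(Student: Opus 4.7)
The plan: For the direction (i)$\Rightarrow$(ii), I would start from the defining homotopy $H\co \rho_M\simeq \rho_{M'}h$ relative to $\partial M$. Given a morphism in $\Pi(M,P)$ represented by a path $\gamma\co I\to M$ with endpoints in $P\subset \partial M$, the map $(t,s)\mapsto H(\gamma(t),s)$ is a homotopy rel endpoints in $K(G,1)$ between $\rho_M\circ\gamma$ and $\rho_{M'}\circ h\circ\gamma$, so $(\rho_M)_\ast[\gamma]=(\rho_{M'})_\ast h_\ast[\gamma]$. This is exactly commutativity of \eqref{e22}.

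For the main direction (ii)$\Rightarrow$(i), I would first observe that compatibility of $h$ with $\phi_M$ and $\phi_{M'}$, combined with $\rho_M\phi_M=\rho_\Sigma=\rho_{M'}\phi_{M'}$, forces $\rho_M=\rho_{M'}h$ on $\partial M$. It therefore suffices to upgrade this to a homotopy $\rho_M\simeq \rho_{M'}h$ rel $\partial M$. The idea is to invoke the standard classification of maps into an Eilenberg--Mac Lane space: for a CW pair $(X,A)$ with $A$ meeting every component of $X$ and a subset $S\subset A$ picking one point from each component of $A$, two maps $f_0,f_1\co X\to K(G,1)$ that agree on $A$ are homotopic rel $A$ if and only if the induced functors $(f_0)_\ast,(f_1)_\ast\co \Pi(X,S)\to \Pi(K(G,1),f_0(S))$ coincide. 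Applied with $X=M$, $A=\partial M$, $S=P$, $f_0=\rho_M$, $f_1=\rho_{M'}h$, the commutativity of \eqref{e22} delivers exactly the required hypothesis, producing the homotopy and hence the $G$-diffeomorphism.

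The content of this classification is standard obstruction theory, and verifying it is the main technical point. After fixing a CW structure on $M$ containing $\partial M$ as a subcomplex and $P$ among the $0$-cells, one attempts to extend the partial homotopy -- defined as $f_0\cup f_1$ on $M\times\{0,1\}$ and as the constant homotopy on $\partial M\times I$ -- over $M\times I$ skeleton by skeleton. Obstructions to extending over a $(k+1)$-cell take values in $\pi_k(K(G,1))$, which vanish for $k\ge 2$ by asphericity of $K(G,1)$; the obstruction at the $1$-cell level is absorbed by choosing, for each interior $0$-cell $v$, a path in $K(G,1)$ from $f_0(v)$ to $f_1(v)$; and the obstruction at the $2$-cell level becomes a $G$-valued cocycle that, after a suitable adjustment of those path choices, is precisely the discrepancy between the two induced groupoid homomorphisms along $1$-cells.

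The hard part is organizing this obstruction bookkeeping: one must show that commutativity of \eqref{e22} permits a coherent adjustment of all path choices so that every $2$-cell obstruction vanishes simultaneously. Working with the full fundamental groupoid $\Pi(M,P)$ rather than with based $\pi_1$ is essential here, both because $\partial M$ may have several components (so a single basepoint does not see all of the required data) and because $G$ is in general non-abelian, so that the cocycle calculation does not collapse to a cohomological one. Once the obstructions are resolved, the resulting extension gives the homotopy $\rho_M\simeq \rho_{M'}h$ rel $\partial M$ and completes the proof.
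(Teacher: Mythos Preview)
Your argument is correct and rests on the same obstruction-theoretic fact as the paper's, but the organization differs in a way worth noting. You invoke directly a groupoid version of the classification of maps into $K(G,1)$: for a CW pair $(X,A)$ with $A$ possibly disconnected and $S\subset A$ a set of basepoints, two maps agreeing on $A$ are homotopic rel $A$ iff their induced functors $\Pi(X,S)\to\Pi(K(G,1),f_0(S))$ agree. You then identify the ``hard part'' as coherently adjusting the path choices at interior $0$--cells so that all $2$--cell obstructions vanish simultaneously.

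The paper sidesteps this bookkeeping by a concrete reduction: it chooses simple arcs $\gamma_2,\ldots,\gamma_t$ in $M$ joining $\phi_M(p_1)$ to the other basepoints $\phi_M(p_i)$, uses commutativity of \eqref{e22} only on the classes $[\gamma_i]\in\Pi(M,P)$ to homotope $\rho_M$ rel $\partial M$ so that it agrees with $\rho_{M'}h$ on the enlarged, now \emph{connected} subcomplex $\partial M\cup\gamma_2\cup\cdots\cup\gamma_t$, and then quotes the standard single-basepoint fact (connected $Y\subset X$, $f|_Y=f'|_Y$, $f_*=f'_*$ on $\pi_1$ $\Rightarrow$ $f\simeq f'$ rel $Y$). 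In effect, the arcs $\gamma_i$ are exactly the ``coherent path choices'' you leave abstract; adding them to the subcomplex converts your groupoid hypothesis into an ordinary $\pi_1$ hypothesis at $\phi_M(p_1)$. Your route is more conceptual and states a reusable lemma; the paper's is shorter and avoids formulating the groupoid version at all.
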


\begin{proof}
  It is clear that (i) implies (ii).

  Suppose that (ii) holds.  It suffices to prove \eqref{e19}.  Suppose
  $p_1,\ldots ,p_t$ $(t\ge 1)$ be the elements of $P_\Sigma $.  For $i=2,\ldots ,t$,
  let $\gamma _i$ be a simple curve between $\phi _M(p_1)$ and $\phi _M(p_i)$ in
  $M$ such that $\gamma _i\cap \gamma _j=\{\phi _M(p_1)\}$ if $i\neq j$.
  Commutativity of \eqref{e22} implies that, for each $i=2,\ldots ,t$, the
  maps $\rho _{M}|_{\gamma _i}\co \gamma _i\rightarrow K(G,1)$ and
  $(\rho _{M'}h)|_{\gamma _i}\co \gamma _i\rightarrow K(G,1)$ are
  homotopic relative to endpoints.
  Hence $\rho _M$ is homotopic rel $\partial M$ to a map
  $(\rho _M)_1\co M\rightarrow K(G,1)$ such that
  \begin{gather*}
    (\rho _M)_1|_{\partial M\cup \gamma _2\cup \dots \cup \gamma _t}=(\rho _{M'}h)|_{\partial M\cup \gamma _2\cup \dots \cup \gamma _t}.
  \end{gather*}
  Note that the subcomplex ${\partial M\cup \gamma _2\cup \dots \cup \gamma _t}$ of $M$ is
  connected.
  By \eqref{e22} we have the following commutative diagram.
  \begin{gather}
    \label{e15}
    \xymatrix{
      \pi _1(M,\phi _M(p_1))
      \ar[rr]^{h_*}_{\cong}
      \ar[dr]_{(\rho _M)_*}
      &&
      \pi _1(M',\phi _{M'}(p_1))
      \ar[dl]^{(\rho _{M'})_*}
      \\
      &
      \pi _1(K(G,1),\rho _\Sigma (p_1))=G.
      &
    }
  \end{gather}
  By the property of the Eilenberg--Mac Lane space $K(G,1)$, it
  follows that $(\rho _M)_1$ is homotopic rel ${\partial M\cup \gamma
  _2\cup \dots \cup \gamma _t}$ to $\rho _{M'}h$.  (Here we use the
  following fact.  Let $X$ be a connected CW complex and let $Y\subset
  X$ be a connected subcomplex.  Suppose $f,f'\co X\rightarrow K(G,1)$
  be maps such that $f|_Y=f'|_Y$ and $f_*=f'_*\co \pi _1X\rightarrow
  \pi _1(K(G,1))=G$.  Then $f$ and $f'$ are homotopic rel $Y$.)
\end{proof}

\subsection{Mapping cylinder}
\label{sec:mapping-cylinder}
Let $h\co \triM\ct \tri{M'}$ be a $G$--diffeomorphism of
$(\Sigma ,\rho _\Sigma )$--bordered $3$--manifolds.

As before, let $C_M=\tri{C_M}$ denote the reduced cylinder over $M$,
which is a cobordism from $M$ to itself.

A {\em mapping cylinder} associated to $h$ is a cobordism
\begin{gather*}
  C_h=(C_M, \rho _{C_h}, \phi _{C_h})\co \triM \rightarrow  \tri{M'}
\end{gather*}
defined as follows.  The map
\begin{gather*}
  \rho _{C_h}\co C_M\rightarrow K(G,1)
\end{gather*}
is induced by a homotopy
\begin{gather*}
  \rho _{\tilde{C}_h}\co M\times [0,1]\rightarrow K(G,1)
\end{gather*}
realizing \eqref{e19}.  The map $\rho _{C_h}$ is well defined since
\begin{gather*}
  \rho _{\tilde{C}_h}(x,0)=\rho _M(x),\quad
  \rho _{\tilde{C}_h}(x,1)=\rho _{M'}(x),\quad
  \rho _{\tilde{C}_h}(y,t)=\rho _M(y)
\end{gather*}
for $x\in M$, $y\in \partial M$, $t\in [0,1]$.
The map
\begin{gather*}
  \phi _{C_h}\co M\cup _\Sigma (-M')\ct \partial C_h
\end{gather*}
is obtained by gluing two diffeomorphisms
\begin{gather*}
  M\ct M\times \{0\},\quad \text{and}\quad M'\xtoc{h^{-1}} M\ct M\times \{1\}.
\end{gather*}

By the property of $K(G,1)$, it follows that $C_h$ defines a unique
morphism from $M$ to $M'$ in $\modC $.

\subsection{Closure map}
\label{sec:closure-map}

Let $W=\tri{W}\co M\rightarrow M$ be an endomorphism of
$M=\triM\in\Ob(\modC )$.
Note that $\phi _W\co M\cup _\Sigma (-M)\ct\partial W$.

 Let $\hat{W}$ denote the closed $4$--manifold
obtained from $W$ by identifying $\phi _W(M)\subset \partial W$ and $\phi _W(-M)\subset \partial W$ by
the diffeomorphism $(\phi _W|_{-M})\circ(\phi _W|_M)^{-1}$.  The map
$\rho _W\co W\rightarrow K(G,1)$ induces a map
\begin{gather*}
  \rho _{\hat{W}}\co \hat{W}\rightarrow K(G,1).
\end{gather*}
Set
\begin{gather*}
  \tr(W)=[\hat{W},\rho _{\hat{W}}]\in \Omega _4(G).
\end{gather*}

If $W\co M\rightarrow M$ and $W'\co M\rightarrow M$ are cobordant, then $\hat{W}$ and
$\hat{W'}$ are cobordant.
Hence we have a map
\begin{gather*}
  \tr\co \End_\modC (M)\rightarrow \Omega _4(G).
\end{gather*}

For two cobordisms $M\xto{W}M'\xto{{W'}}M$ in $\modC $, we have
the {\em trace identity}
\begin{gather}
  \label{e7}
  \tr({W'}\circ W)  =  \tr(W\circ{W'}).
\end{gather}

\begin{remark}
  The map $\tr\co \End_\modC (M)\rightarrow \Omega _4(G)$ is a group homomorphism.
  We do not need this fact in the rest of this paper.
\end{remark}

\subsection{Functor induced by a $3$--cobordism}
\label{sec:functor-induced-3}
Let $\Sigma =(\Sigma ,\rho _\Sigma )$ and $\Sigma '=(\Sigma ',\rho _{\Sigma '})$ be two $G$--surfaces, and
let $M_0=\tri{M_0}$ be a cobordism between $\Sigma '$ and $\Sigma $, i.e., $\tri{M_0}$
is a $(\Sigma '\sqcup(-\Sigma ))$--bordered $G$--$3$--manifold.  Then we have a functor
\begin{gather*}
  F_{M_0}\co \modC _\Sigma \rightarrow \modC _{\Sigma '}
\end{gather*}
defined as follows.

For an object $M=\triM\in \Ob(\modC _{\Sigma })$, define
\begin{gather*}
  F_{M_0}(\triM)=\tri{F_{M_0}(M)},
\end{gather*}
where
\begin{gather*}
  F_{M_0}(M) = M\cup _\Sigma M_0,\\
  \rho _{F_{M_0}(M)}=\rho _M\cup \rho _{M_0},\\
  \phi _{F_{M_0}(M)}=\phi _{M_0}|_{\Sigma '}.
\end{gather*}
To simplify the notations, we set $M''=M\cup _\Sigma M_0$.

For a morphism $$\tri{W}
\co \triM\rightarrow \tri{M'}$$ in $\modC _\Sigma $, set
\begin{gather*}
  F_{M_0}(\tri{W})=\tri{F_{M_0}(W)}.
\end{gather*}
Here
\begin{gather*}
  F_{M_0}(W) = C_{M''}\cup _M W
\end{gather*}
is obtained by gluing $C_{M''}$ and $W$ along $M$ using the maps
\begin{gather*}
  M \overset{\phi _W|_{M}}{\hookrightarrow} \partial W\quad \text{and}\quad
  M \ct M\times \{0\}\subset M''\times \{0\}\subset \partial C_{M''}.
\end{gather*}
We set
\begin{gather*}
  \rho _{F_{M_0}(W)}=\rho _W\cup \rho _{C_{M''}}\co F_{M_0}(W)\rightarrow K(G,1).
\end{gather*}
The map
\begin{gather*}
  \phi _{F_{M_0}(W)}\co (M\cup _\Sigma M_0)\cup _{\Sigma '}(-(M'\cup _\Sigma M_0))\ct \partial (F_{M_0}(W))
\end{gather*}
is defined in an obvious way.
It is not difficult to check that $F_{M_0}$ is a well-defined functor.

The following proposition means that the functor $F_{M_0}$ preserves
the closure map~$\tr$.

\begin{proposition}
  \label{r8}
  Let $\Sigma $, $\Sigma '$ and $M_0$ be as above.  For an endomorphism
  ${W}\co M\rightarrow M$ in $\modC _\Sigma $, we have
  \begin{gather*}
    \tr(F_{M_0}({W})) = \tr({W})\in \Omega _4(G).
  \end{gather*}
\end{proposition}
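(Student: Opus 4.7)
My plan is to exhibit a compact oriented $G$--$5$--manifold $X$ with boundary $\widehat{F_{M_0}(W)}\sqcup(-\widehat{W})$, compatible with the structure maps to $K(G,1)$, so that $\tr(F_{M_0}(W))=\tr(W)$ in $\Omega_4(G)$ by definition.

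First I analyze $\widehat{F_{M_0}(W)}$ by decomposing $F_{M_0}(W) = C_{M''}\cup_M W$ into three pieces: the original $W$, the $M$--subcylinder $M\times I\subset C_{M''}$, and the $M_0$--subcylinder $(M_0\times I)/(\Sigma'\times I)\subset C_{M''}$, the last two glued along $\Sigma\times I=\pa M\times I$. Tracking the effect of the closure (which identifies source $M''$ with target $-M''$), the piece $W\cup(M\times I)$ reassembles as a copy of $\widehat{W}$, since the extra collar $M\times I$ may be absorbed without changing the diffeomorphism type; the $M_0$--subcylinder closes to
\begin{gather*}
\widetilde{N} := (M_0\times S^1)\big/\bigl(\{x\}\times S^1\sim\pt \text{ for all } x\in\Sigma'\bigr),
\end{gather*}
which has $\pa\widetilde{N}=\Sigma\times S^1$; and the gluing surface $\Sigma\times I$ closes to an embedded $\Sigma\times S^1$ since its two endpoints are both identified with the single $\Sigma\subset\widehat{W}$. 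This yields a diffeomorphism
\begin{gather*}
\widehat{F_{M_0}(W)} \;\cong\; \widehat{W}_0\cup_{\Sigma\times S^1}\widetilde{N},
\end{gather*}
where $\widehat{W}_0 = \widehat{W}\setminus\opint N(\Sigma)$ is the complement of an open tubular neighborhood $N(\Sigma)\cong\Sigma\times D^2$ of the embedded surface $\Sigma\hookrightarrow\widehat{W}$ arising from the closure.

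Second, I build a $5$--dimensional $G$--bordism $Y$ from $\Sigma\times D^2$ to $\widetilde{N}$ relative to the common boundary $\Sigma\times S^1$. The natural candidate is $Y := (M_0\times D^2)/{\sim}$, where each disk $\{x\}\times D^2$ for $x\in\Sigma'$ is collapsed to a point; standard corner-smoothing shows $Y$ admits a smooth $5$--manifold structure with $\pa Y = (\Sigma\times D^2)\cup_{\Sigma\times S^1}\widetilde{N}$, and the composition $\rho_{M_0}\circ\proj\co M_0\times D^2\to K(G,1)$ descends to $Y$ since the identifications only affect points sharing the same $M_0$--coordinate. I then assemble $X := (\widehat{W}\times I)\cup_{\Sigma\times D^2}Y$ by gluing $Y$ to $\widehat{W}\times\{1\}$ along $\Sigma\times D^2\subset\widehat{W}$; the two $K(G,1)$--maps agree on the gluing locus (after a small homotopy exploiting the contractibility of $D^2$), one boundary component of $X$ is $\widehat{W}\times\{0\}=\widehat{W}$, and the other, obtained by replacing $\Sigma\times D^2$ in $\widehat{W}\times\{1\}$ with the remainder $\widetilde{N}$ of $\pa Y$, is precisely $\widehat{F_{M_0}(W)}$.

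The main obstacle I anticipate is verifying the smooth manifold structure of $Y$ near the collapsed $\Sigma'$ when $\Sigma'\neq\emptyset$: locally the model is a bundle over $\Sigma'$ whose fibre is the open cone on $D^2$, and one must smooth this carefully in a way compatible with the collar structures on $\Sigma\subset\pa M_0$ and $\Sigma\subset\pa M$ used above, while keeping the maps to $K(G,1)$ consistent. These verifications are standard but require attention to corners and tubular neighborhoods.
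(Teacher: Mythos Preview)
Your proposal is correct and, once unpacked, produces essentially the same $5$--cobordism as the paper's proof. The paper's argument is far terser: it simply takes $X=\widehat{W'}\times[0,1]$ with $W'=F_{M_0}(W)$ and collapses, on the end $\widehat{W'}\times\{0\}$, the $[0,1]$--direction of $C_{M''}$ over $M_0$, asserting that the quotient $X/{\sim}$ is a $G$--$5$--cobordism from $\widehat{W'}$ to $\widehat{W}$. If you decompose this quotient using your splitting $\widehat{W'}\cong\widehat{W}_0\cup_{\Sigma\times S^1}\widetilde{N}$, the piece $(\widetilde{N}\times[0,1])/{\sim}$ is exactly your $Y=(M_0\times D^2)/{\sim_{\Sigma'}}$ (since $(S^1\times[0,1])/(S^1\times\{0\})\cong D^2$), and the remaining piece $\widehat{W}_0\times[0,1]$ differs from your $\widehat{W}\times[0,1]$ only by a collar $\Sigma\times D^2\times[0,1]$, which is absorbed into $Y$. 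So the two descriptions are reparametrizations of one construction: the paper collapses from the $\widehat{W'}$ side, you build up from the $\widehat{W}$ side.

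Your explicit decomposition has the advantage of making the smoothness issues visible. The concern you raise near $\Sigma'$ is genuine but mild: the cone on $D^2$ is a convex body in $\mathbb{R}^3$, hence homeomorphic to $D^3$, so $Y$ is a topological manifold and can be smoothed by rounding the cone. The paper's collapse has an analogous issue at the interface $\Sigma$ between $M$ and $M_0$ (where the collapse is one--sided), which it passes over in silence; your formulation isolates these corners more cleanly.
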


\begin{proof}
  Set $W'=F_{M_0}(W)$, and let $\hat{W}'$ be the closed $4$--manifold
  associated to $W'$ as defined in Section \ref{sec:closure-map}.
  Consider the cylinder $X=\hat{W}'\times [0,1]$, which is a $5$--cobordism
  between $\hat{W}'$ and itself.  Let $\sim$ be the equivalence
  relation on $\hat{W}'\times \{0\}\subset \partial X$ by
  \begin{gather*}
    ((x,t),0)\sim((x,t'),0)
  \end{gather*}
  for $x\in M_0$ and $t,t'\in [0,1]$.  The $5$--manifold $X/\sim$ is a
  cobordism between $\hat{W}'$ and $\hat{W}$, on which one can
  construct a structure of a cobordism of closed $G$--$4$--manifolds in a
  natural way.  Hence we have the result.
\end{proof}

\subsection{Restatement of Theorem \ref{t2}}
\label{sec:rest-theor-reft2}

As in Sections \ref{sec2} and \ref{sec3}, let $M$ be a compact,
connected, oriented $3$--manifold with $\partial M\neq\emptyset$.  Let $N$ be
a normal subgroup in $\pi _1M$ and set $G=\pi _1M/N$.  Let $q\co \pi _1M\rightarrow G$
be the projection.

Let $\rho _M\co M\rightarrow K(G,1)$ be the composite of the natural maps
\begin{gather*}
  M\xto{} K(\pi _1M,1) \xto{K(q,1)} K(G,1).
\end{gather*}
Set $\rho _{\partial M}=\rho _M|_{\partial M}\co \partial M\rightarrow K(G,1)$.  Note that
$M=(M,\rho _M,\id_{\partial M})$ is a $(\partial M,\rho _{\partial M})$--bordered $3$--manifold.
In the following, we work in the groupoid $\modC =\modC _{(\partial M,\rho _{\partial M})}$.

Let $L$ be an $N$--link in $M$.  Recall that
\begin{gather*}
  W_L=(M\times [0,1])\cup (\text{$2$--handles attached along $L\times \{1\}$}).
\end{gather*}
By abuse of notation, let $W_L$ denote the $4$--manifold obtained from
$W_L$ by ``reducing $\partial M\times [0,1]$'' by the equivalence relation
$(x,t)\sim(x,t')$, $x\in \partial M$, $t,t\in [0,1]$.  One can identify $W_L$
with
\begin{gather*}
  C_{M}\cup (\text{$2$--handles attached along $L\times \{1\}$}).
\end{gather*}

The map $\rho _M\co M\rightarrow K(G,1)$ extends to
\begin{gather*}
  \rho _{W_L}\co W_L\rightarrow K(G,1),
\end{gather*}
which is unique up to homotopy relative to $M$.  Set
$$\rho _{M_L}=\rho _{W_L}|_{M_L}\co M_L\rightarrow K(G,1).$$ Then $W_L$ is a cobordism
between $(\partial M,\rho _{\partial M})$--bordered $3$--manifolds $(M,\rho _M)$ and
$(M_L,\rho _{M_L})$.

Let $L'$ be another $N$--link in $M$.  If there is a $G$--diffeomorphism
\begin{gather*}
  h\co \tri{M_L}\ct \tri{M_{L'}},
\end{gather*}
then we have
\begin{gather}
  \label{e10}
  \eta _G(M,L,L',h)=\theta _4(\tr(W_{L'}^{-1}\circ C_h\circ W_L)),
\end{gather}
where $C_h\co M_L\rightarrow M_{L'}$ is the mapping cylinder of $h$.

Now, we can restate Theorem \ref{t2} as follows.

\begin{theorem}
  \label{r10}
  Let $M$, $N$, $G$ be as above.  Let $L$ and $L'$ be $N$--links in
  $M$.  Then the following conditions are equivalent.
  \begin{enumerate}
  \item $L$ and
    $L'$ are $\delta (N)$--equivalent.
  \item There is a $G$--diffeomorphism $h\co M_L\rightarrow M_{L'}$ such that
    \begin{gather}
      \label{e21}
      \theta _4(\tr(W_{L'}^{-1}\circ C_h\circ W_L))=0.
    \end{gather}
  \end{enumerate}
\end{theorem}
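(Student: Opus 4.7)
The plan is to derive Theorem \ref{r10} directly from Theorem \ref{t2} by checking that conditions (ii) in the two theorems say the same thing. There are two things to verify: (a) the $G$--diffeomorphism condition corresponds to the commutativity of Diagram \eqref{e4}, and (b) the homological condition $\theta_4(\tr(W_{L'}^{-1}\circ C_h\circ W_L))=0$ corresponds to $\eta_{\pi_1M/N}(M,L,L',h)=0$.

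For (a), I would apply Proposition \ref{r21} with $\Sigma=\partial M$. Since $\rho_{M_L}\co M_L\to K(G,1)$ is induced by the composite of the natural maps $M_L\to K(\pi_1M_L,1)\to K(G,1)$, the induced functor $(\rho_{M_L})_*\co \Pi(M_L,P_L)\to \Pi(K(G,1),\rho_\Sigma(P_\Sigma))$ is naturally identified with $q_L\co \Pi(M_L,P_L)\onto\Pi(M,P)/N$, and similarly for $L'$. Thus the groupoid triangle \eqref{e22} in Proposition \ref{r21} coincides with Diagram \eqref{e4}, so the existence of a $G$--diffeomorphism $h\co \tri{M_L}\ct \tri{M_{L'}}$ is equivalent to the existence of a boundary-preserving diffeomorphism $h$ making \eqref{e4} commute.

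For (b), I would unpack the endomorphism $V:=W_{L'}^{-1}\circ C_h\circ W_L\co M\to M$ in $\modC$. Composing the three cobordisms amounts to gluing $W_L$ (with its reduced cylinder structure), the mapping cylinder $C_h$, and $-W_{L'}$ along $M_L$ and $M_{L'}$ respectively; closing the resulting endomorphism then identifies the two outer copies of $M$. The mapping cylinder $C_h$ is topologically a reduced cylinder on $M_L$, glued to $-W_{L'}$ via $h$, so collapsing the cylinder factor produces a closed $4$--manifold $\widehat V$ diffeomorphic to the closed $4$--manifold $W=W_L\cup_\partial(-W_{L'})$ of Section \ref{sec:constr-homol-class}. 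The map $\rho_{\widehat V}\co \widehat V\to K(G,1)$, built from $\rho_{W_L}$, $\rho_{C_h}$, $\rho_{W_{L'}}$, agrees up to homotopy with $\rho_{W,N}$; this follows from the standard fact that two maps from a CW complex to $K(G,1)$ which agree on a connected subcomplex and induce the same map on $\pi_1$ are homotopic rel that subcomplex. Consequently $[\widehat V,\rho_{\widehat V}]=[W,\rho_{W,N}]\in\Omega_4(G)$, and
\begin{gather*}
\theta_4(\tr(V))=(\rho_{\widehat V})_*([\widehat V])=(\rho_{W,N})_*([W])=\eta_{\pi_1M/N}(M,L,L',h).
\end{gather*}

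Combining (a) and (b), the equivalence of (i) and (ii) in Theorem \ref{r10} is immediate from Theorem \ref{t2}. The main technical point is (b): keeping careful track of the gluings and the reduced cylinder structures to check that $\widehat V$ and $W$ represent the same class in $\Omega_4(G)$. Once the cylinder-collapse identification is in place, the argument is formal, relying only on Proposition \ref{r21}, the definition of the closure map $\tr$, and the isomorphism $\theta_4\oplus\sigma$ of \eqref{e5}.
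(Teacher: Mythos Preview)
Your proposal is correct and follows exactly the approach the paper takes: the paper presents Theorem~\ref{r10} as a restatement of Theorem~\ref{t2}, asserting equation~\eqref{e10} without proof and leaving the identification of the $G$--diffeomorphism condition with Diagram~\eqref{e4} implicit. You have simply filled in the details the paper omits, using Proposition~\ref{r21} for part~(a) and unpacking the closure of $W_{L'}^{-1}\circ C_h\circ W_L$ for part~(b), which is precisely what is needed.
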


\section{Characterization of $G$--diffeomorphism}
\label{sec5}

In applications of Theorem \ref{r10}, the homological condition
\eqref{e21} could be an obstruction.  In this section we show that this
condition can be eliminated by introducing new moves on
$N$--links corresponding to elements of the homology group $H_4(G)$.

\subsection{Framed link realization of $\alpha \in H_4(G)$}
\label{sec:fram-link-real}
A framed link $L$ in  a $G$--$3$--manifold $(M,\rho _M)$ is said to be
{\em $G$--trivial} if we have $(\rho _M)_*(N_L)=\{1\}$.  In other
words, $L$ is $G$--trivial if each component of $L$ is mapped by
$\rho_M$ to a null-homotopic loop in $K(G,1)$.

\begin{theorem}
  \label{t3}
  Let $G$ be a group, and let $\alpha \in H_4(G)$.  Then there are
  \begin{itemize}
  \item a $G$--$3$--manifold $(V,\rho _V)$ with $V$ a handlebody,
  \item a $G$--trivial framed link $K$ in $(V,\rho _V)$,
  \item a $G$--diffeomorphism $h_V\co V_K\ct V$
  \end{itemize}
  such that we have
  \begin{gather}
    \label{e20}
    \theta _4(\tr({C_{h_V}} \circ {W^V_K}))=\alpha.
  \end{gather}
  Here the cobordism $W_K^V\co V\to V_K$ is defined by
  \begin{gather*}
    W_K^V=C_V\cup (\text{$2$--handles attached along $K\times\{1\}$}).
  \end{gather*}
\end{theorem}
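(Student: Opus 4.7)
The plan is to realize $\alpha$ geometrically as the image $(\rho_X)_*[X]$ of a closed oriented $4$--manifold $X$ built exactly in the form $\widehat{C_{h_V}\circ W_K^V}$. By the isomorphism \eqref{e5}, the composite $\theta_4\co \Omega_4(G)\twoheadrightarrow H_4(G)$ is surjective, so pick any closed oriented $G$--$4$--manifold $(X,\rho_X)$ with $(\rho_X)_*[X]=\alpha$; the signature of $X$ is irrelevant as it lies in the kernel of $\theta_4$.

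Give $X$ a handle decomposition with one $0$--handle, $g$ $1$--handles, $m$ $2$--handles, $g'$ $3$--handles, and one $4$--handle. By stabilizing with cancelling $1$--$3$ handle pairs, arrange $g=g'$. The union $A$ of the $0$--handle and $1$--handles is a $4$--dimensional handlebody with $\partial A\cong\#_g(S^1\times S^2)$. Let $V$ be the $3$--dimensional genus--$g$ handlebody; the reduced cylinder $C_V$ is also a $4$--dimensional handlebody of genus $g$ with $\partial C_V$ equal to the double of $V$, namely $V_+\cup_{\partial V}V_-\cong\#_g(S^1\times S^2)$, so identify $A\cong C_V$. By handle slides of the $2$--handles over the $1$--handles together with isotopy inside $\partial C_V$, push the framed attaching link of the $2$--handles into $V_+$; call the resulting framed link $K\subset V_+\cong V$. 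Then $C_V\cup(\text{2--handles along }K)$ is precisely $W_K^V$.

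The remaining piece $A'$, the union of the $3$--handles and $4$--handle, is dually a $4$--dimensional handlebody of genus $g$, hence diffeomorphic to $\natural^g(S^1\times B^3)\cong V\times[0,1]$. The gluing identifies $\partial A'$ with $\partial W_K^V=V_K\cup_{\partial V}V_-$, giving a genus--$g$ Heegaard splitting of $\#_g(S^1\times S^2)$; by Waldhausen's uniqueness of Heegaard splittings of $\#_g(S^1\times S^2)$, $V_K$ must itself be a genus--$g$ handlebody, in particular diffeomorphic to $V$. Choosing a product structure $A'\cong V\times[0,1]$ compatible with this boundary splitting produces a diffeomorphism $h_V\co V_K\ct V$ such that $A'$ realizes the mapping cylinder $C_{h_V}$ as a cobordism from $V_K$ to $V$. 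Setting $\rho_V=\rho_X|_V$, Proposition \ref{r21} ensures $h_V$ is a $G$--diffeomorphism, since both sides inherit compatible maps to $K(G,1)$ from $\rho_X$. Moreover, each component of $K$ bounds the core disk of a $2$--handle of $X$, hence is null-homotopic in $X$ and so mapped by $\rho_X$ to a null-homotopic loop in $K(G,1)$; thus $K$ is $G$--trivial in $(V,\rho_V)$.

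By construction, $\widehat{C_{h_V}\circ W_K^V}$ is diffeomorphic to $X$ as a closed $G$--$4$--manifold, so $\theta_4(\tr(C_{h_V}\circ W_K^V))=(\rho_X)_*[X]=\alpha$, which is \eqref{e20}. The main obstacle will be arranging the handle decomposition of $X$ as above and verifying that $A'$ admits a product structure $V\times[0,1]$ compatible with the prescribed splitting $V_K\cup V_-$ of its boundary; this is exactly where Waldhausen's uniqueness theorem on Heegaard splittings of $\#_g(S^1\times S^2)$ is used in an essential way, and also where stabilization by $1$--$3$ canceling handle pairs may be required to match the genera on both sides.
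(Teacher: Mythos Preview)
There is a genuine gap in your use of Waldhausen's theorem. Waldhausen's result asserts that any two genus--$g$ Heegaard splittings of $\#_g(S^1\times S^2)$ are isotopic; in particular, it presupposes that \emph{both} sides of the splitting are already known to be handlebodies. In your situation you know that $V_-$ is a genus--$g$ handlebody sitting inside $\partial A'\cong\#_g(S^1\times S^2)$, but you have not established that the complementary piece $(V_+)_K$ is a handlebody. Without this, the decomposition $(V_+)_K\cup_\Sigma V_-$ is not yet a Heegaard splitting, and Waldhausen does not apply. It is not true in general that a genus--$g$ handlebody embedded in $\#_g(S^1\times S^2)$ has handlebody complement: for $g=1$, a small null\-homotopic solid torus in $S^1\times S^2$ has reducible complement. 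So even if $(V_+)_K$ turns out to be a handlebody in your particular setup, this requires an independent argument, and it is exactly the heart of the matter---it is equivalent to the statement $V_K\cong V$ that you are trying to prove. (There is also a minor slip: there is no such thing as a cancelling $1$--$3$ handle pair; presumably you mean to add cancelling $1$--$2$ and $2$--$3$ pairs separately to balance $g$ and $g'$.)

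The paper's proof sidesteps this difficulty by reversing the order of operations. Rather than extracting $V$ from a handle decomposition of a closed representative and then trying to show $V_K\cong V$ after the fact, the paper first chooses a handlebody $V$ and an embedding $g\co V\hookrightarrow U$ with $g_*\co\pi_1V\twoheadrightarrow\pi_1U$, and cuts $U$ along $g(V)$ to obtain a cobordism $E\co V\to V$ over $K(G,1)$. Both ends of $E$ are $V$ \emph{by construction}, so no Waldhausen-type argument is needed. The remaining work is to replace $E$, within its bordism class over $K(G,1)$, by a cobordism $E'$ built from $C_V$ using only $2$--handles; this is done by trading the $1$--handles (and dually the $3$--handles) for $2$--handles via surgery along null\-homotopic circles in $E$, the surjectivity of $g_*$ guaranteeing that such null\-homotopic completions of the $1$--handle cores exist. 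The resulting $E'$ is then manifestly of the form $C_{h_V}\circ W_K^V$. In short, the paper modifies the $4$--manifold (staying in the bordism class) to force the desired structure, whereas you try to keep the $4$--manifold fixed and read off the structure, which runs into the unresolved question of whether $V_-$ is unknotted in $\partial A'$.
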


We call $((V,\rho _V),K,h_V)$ a {\em framed link realization} of $\alpha $.

\begin{proof}
  Since $\theta _4\co \Omega _4(G)\rightarrow H_4(G)$ is surjective, $\alpha \in H_4(G)$ is
  represented by a closed, connected, oriented $G$--$4$--manifold
  $(U,\rho _U)$.  Thus we have $(\rho _U)_*([U])=\alpha $, where $[U]\in H_4(U)$ is
  the fundamental class of $U$.

  Suppose that $\pi _1(U)$ is generated by $r(\ge 0)$ elements.  Let $V$
  denote the $3$--dimensional handlebody of genus $r$.  Take an
  embedding $g\co V\hookrightarrow U$ such that $g_*\co \pi _1V\rightarrow \pi _1U$ is
  surjective.  Set $\rho _V=\rho _Ug\co V\rightarrow K(G,1)$.  Then we have a
  $(\partial V,\rho _{\partial V})$--bordered $3$--manifold
  $\tri{V}$ in an obvious way.

  Let $E$ denote the $4$--manifold obtained from $U$ by cutting along
  the $3$--submanifold $g(V)$.  We regard $E$ as a cobordism from $V$ to itself.  Let
  $\phi _E\co V\cup _{\partial V}(-V)\ct\partial E$ be the boundary parameterization.  Let
  $\rho _E\co E\rightarrow K(G,1)$ be the composite of $\rho _U$ with the canonical
  map $E\rightarrow U$.  Then we have a cobordism
  \begin{gather*}
    E=\tri{E}\co \tri{V}\rightarrow \tri{V},
  \end{gather*}
  which represents an endomorphism ${E}\co V\rightarrow V$ in the category
  $\modC _{\partial V}$.  By construction, we have $\tr(E)=[U,\rho _U]$,
  hence
  \begin{gather}
    \label{e6}
    \theta _4(\tr({E}))=\alpha .
  \end{gather}

  Take a handle decomposition of $E$
  \begin{gather}
    \label{e18}
    E\cong
    C_V\cup (\text{$1$--handles})\cup (\text{$2$--handles})\cup (\text{$3$--handles}),
  \end{gather}
  where $C_V$ is the reduced cylinder of $V$.  We will construct a new
  cobordism $E'\co V\rightarrow V$ cobordant to $E$ over $K(G,1)$ such that $E'$ has a handle
  decomposition with only $2$--handles, by handle-trading as
  follows.

  Suppose that there is a $1$--handle $D^3\times [0,1]$ in the handle
  decomposition \eqref{e18}.  Let $\gamma =\{0\}\times [0,1]\subset D^3\times [0,1]$ be the
  core of the $1$--handle.  Since $g_*$ is surjective, it follows that
  there is a path $\gamma '$ in $V\times \{1\}\subset \partial C_V$ such that $\partial \gamma '=\partial \gamma $
  and the union $\gamma '':=\gamma \cup \gamma '$ is null-homotopic in $E$.  Surgery on
  $E$ along $\gamma ''$ (with any of the at most two possible framings) gives a
  $4$--manifold $E_{\gamma ''}$ cobordant to $E$.  Since $\gamma ''$ is
  null-homotopic in $E$, the map $\rho _E\co E\rightarrow K(G,1)$ extends to a map
  $\rho _{X^E_{\gamma ''}}\co X^E_{\gamma ''}\rightarrow K(G,1)$, where
  \begin{gather*}
    X^E_{\gamma ''}=(C_E\times [0,1])\cup (\text{$2$--handle attached along
      $\gamma ''\times \{1\}$})
  \end{gather*}
  is the cobordism between $E$ and $E_{\gamma ''}$ associated with the
  surgery along $\gamma ''$.  Thus $(E,\rho _E)$ is bordant over $K(G,1)$ to
  $E_{\gamma ''}$.  The manifold $E_{\gamma ''}$ admits a handle decomposition
  with the number of $1$--handles less by $1$ than \eqref{e18}.  By
  induction, we can trade all the $1$--handles, and all the $3$--handles by
  duality, to obtain a desired cobordism $\tri{E'}$ between
  $\tri{V}$ to itself.

  Since the cobordism $E'$ has only $2$--handles, it follows that the
  cobordism $E'$ is equivalent to the composite $C_{h_V}\circ W^V_K$,
  where $K$ is a $G$--trivial framed link in $V$, and $C_{h_V}$ is a
  mapping cylinder of a $G$--diffeomorphism $h_V\co V_K\ct V$.
  It follows that
  \begin{gather*}
    \theta _4(\tr({C_{h_V}}\circ{W^V_K}))
    =\theta _4(\tr({E'}))
    =\theta _4(\tr({E}))
    =\alpha .
  \end{gather*}
\end{proof}

\subsection{Moves on framed links associated to framed link realizations}
\label{sec:-moves-h4g}

As in Section \ref{sec4}, let $\Sigma=(\Sigma,\rho_\Sigma)$ be a
$G$--surface and $M=\triM$ a $\Sigma$--bordered $3$--manifold.  Set
\begin{gather*}
  N=\ker((\rho_M)_*\co\pi_1M\to G).
\end{gather*}
A framed link in $M$ is $G$--trivial if and only if it is an $N$--link.

Let $R=((V,\rho _V),K,h_V)$ be a framed link realization of $\alpha
\in H_4(G)$, and let $L$ be an $N$--link $L$ in $M$.  Suppose that
there is an orientation-preserving embedding $f\co V\hookrightarrow
M\setminus L$ such that $\rho _Mf\simeq\rho _V\co V\rightarrow
K(G,1)$.  Then the framed link $L\cup f(K)$ in $M$ is again an $N$--link.
We say that $L\cup f(K)$ is obtained from $L$ by an {\em $R$--move}.

An $R$--move preserves the $G$--diffeomorphism class of results of surgery.
Indeed, there is a $G$--diffeomorphism
\begin{gather*}
  h\co M_{L\cup f(K)}\ct M_L
\end{gather*}
obtained by gluing $h_V\co V_K\ct V$ and $\id_{M\setminus \opint
f(V)}$.

\begin{proposition}
  \label{r18}
  In the above situation, we have
  \begin{gather*}
    \eta _G(M,L\cup f(K),L,h)
    =\theta _4(\tr( W_{L}^{-1}\circ C_h \circ W_{L\cup f(K)}  ))
    =\alpha \in H_4(G).
  \end{gather*}
\end{proposition}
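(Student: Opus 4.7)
The plan is to reduce the trace $\tr(W_L^{-1}\circ C_h\circ W_{L\cup f(K)})$ to $\tr(C_{h_V}\circ W^V_K)$, at which point Theorem~\ref{t3} yields $\alpha$ after applying $\theta_4$, and \eqref{e10} identifies the result with $\eta_G(M,L\cup f(K),L,h)$. First, because $f(V)\subset M\setminus L$, the framed links $L$ and $f(K)$ are disjoint and their surgeries can be performed in either order, giving a factorization $W_{L\cup f(K)} = W^{M_L}_{f(K)}\circ W_L\co M\rightarrow M_L\rightarrow M_{L\cup f(K)}$, where $W^{M_L}_{f(K)}$ denotes the trace cobordism of surgery along $f(K)\subset M_L$.

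Next, since $\modC$ is a groupoid, $W_L\circ W_L^{-1}$ coincides with the identity morphism $C_{M_L}$ of $M_L$. Applying the trace identity~\eqref{e7} with $W=W_L^{-1}$ and $W'=C_h\circ W^{M_L}_{f(K)}\circ W_L$, followed by this cancellation, one obtains
\[
  \tr(W_L^{-1}\circ C_h\circ W^{M_L}_{f(K)}\circ W_L)
  = \tr(C_h\circ W^{M_L}_{f(K)}\circ W_L\circ W_L^{-1})
  = \tr(C_h\circ W^{M_L}_{f(K)}),
\]
reducing the problem to computing the trace of the endomorphism $C_h\circ W^{M_L}_{f(K)}$ of $M_L$.

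The geometric heart of the argument is to recognize this endomorphism as $F_{N_0}(C_{h_V}\circ W^V_K)$, where $N_0:=M_L\setminus\opint f(V)$ is endowed with $\rho_{N_0}=\rho_{M_L}|_{N_0}$ and regarded as a cobordism from $\partial M$ to $\partial V$ by using $f$ to identify $\partial f(V)$ with $\partial V$. On objects, $F_{N_0}(V)=V\cup_{\partial V}N_0=M_L$ and $F_{N_0}(V_K)=V_K\cup_{\partial V}N_0=M_{L\cup f(K)}$; on morphisms one verifies $F_{N_0}(W^V_K)=W^{M_L}_{f(K)}$ (by absorbing the $V\times I$ collar that appears in the definition of $F_{N_0}$ into $C_{M_L}$) and $F_{N_0}(C_{h_V})=C_h$ (since $h$ is $h_V$ on $f(V)$ and the identity on $N_0$). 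By functoriality, $F_{N_0}(C_{h_V}\circ W^V_K)=C_h\circ W^{M_L}_{f(K)}$, and Proposition~\ref{r8} then gives $\tr(C_h\circ W^{M_L}_{f(K)})=\tr(C_{h_V}\circ W^V_K)$, whose image under $\theta_4$ is $\alpha$ by Theorem~\ref{t3}.

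The main obstacle will be the two morphism-level identifications $F_{N_0}(W^V_K)=W^{M_L}_{f(K)}$ and $F_{N_0}(C_{h_V})=C_h$ in $\modC$, which require unpacking the definition of $F_{N_0}$ through the reduced cylinder $C_{M_L}$ and verifying compatibility of boundary parameterizations and the maps to $K(G,1)$. However, once one observes that the inserted piece $W^V_K$ (respectively $C_{h_V}$) is supported in $f(V)\times I\subset M_L\times I$ while its complement is a trivial cylinder that merges with the collar, these identifications become manifest, and the chain of equalities above completes the proof.
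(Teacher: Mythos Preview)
Your proof is correct and follows essentially the same route as the paper: factor $W_{L\cup f(K)}=W^{M_L}_{f(K)}\circ W_L$, apply the trace identity~\eqref{e7} to cancel $W_L\circ W_L^{-1}$, identify $C_h\circ W^{M_L}_{f(K)}$ with $F_{M_L\setminus\opint f(V)}(C_{h_V}\circ W^V_K)$, and invoke Proposition~\ref{r8}. The only quibble is attribution in the last step: $\theta_4(\tr(C_{h_V}\circ W^V_K))=\alpha$ holds by the \emph{hypothesis} that $R$ is a framed link realization of $\alpha$ (i.e., by the defining equation~\eqref{e20}), not by Theorem~\ref{t3} itself.
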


\begin{proof}
  Note that the cobordism $W_{L\cup f(K)}\co M\rightarrow M_{L\cup
  f(K)}$ is a composition of two cobordisms
  \begin{gather*}
    M \xto{W_L} M_L \xto{W^{M_L}_{f(K)}} M_{L\cup f(K)},
  \end{gather*}
  where
  \begin{gather*}
    W_{f(K)}^{M_L}=C_{M_L}\cup (\text{$2$--handles attached along $f(K)\times \{1\}$})
  \end{gather*}
  and we identify $M_{L\cup f(K)}$ with $(M_L)_{f(K)}$.
  Hence we have
  \begin{gather*}
    \begin{split}
      \theta _4(\tr( W_{L}^{-1}\circ C_h \circ W_{L\cup f(K)}  ))
      =&\theta _4(\tr( W_{L}^{-1}\circ C_h \circ W^{M_L}_{f(K)} \circ W_L  ))\\
      =&\theta _4(\tr( C_h \circ W^{M_L}_{f(K)} \circ W_L \circ W_{L}^{-1} ))
      \quad \quad \text{by \eqref{e7}}
      \\
      =&\theta _4(\tr( C_h \circ W^{M_L}_{f(K)} ))
      \\
      =&\theta _4(\tr( F_{M_L\setminus \opint f(V)}( C_{h_V} \circ W^V_K )))\\
      =&\theta _4(\tr( C_{h_V} \circ W^V_K ))
      \quad \quad \quad \quad\quad \quad  \text{by Prop. \ref{r8}}\\
      =&\alpha .
    \end{split}
  \end{gather*}
\end{proof}

The following result follows immediately from Theorem \ref{r10} and
Proposition \ref{r18}.

\begin{proposition}
  \label{r19}
  Let $R$ and $R'$ be two framed link realizations of $\alpha\in
  H_4(G)$.  Let $L$ be an $N$--link in $M$.  Suppose that we can
  obtain an $N$--link $L_R$ (resp.\ $L_{R'}$) from $L$ by an $R$--move
  (resp.\ $R'$--move).  Then $L_R$ and $L_{R'}$ are
  $\delta(N)$--equivalent.
\end{proposition}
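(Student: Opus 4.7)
The plan is to invoke Theorem \ref{r10} for the pair $(L_R,L_{R'})$: it suffices to exhibit a $G$--diffeomorphism $h\co M_{L_R}\ct M_{L_{R'}}$ with $\eta_G(M,L_R,L_{R'},h)=0$. Applying Proposition \ref{r18} separately to the $R$--move and to the $R'$--move, I obtain $G$--diffeomorphisms $h_R\co M_{L_R}\ct M_L$ and $h_{R'}\co M_{L_{R'}}\ct M_L$ with
\[
\eta_G(M,L_R,L,h_R)=\eta_G(M,L_{R'},L,h_{R'})=\alpha.
\]

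Next I would take $h:=h_{R'}^{-1}\circ h_R$. By the functoriality of mapping cylinders in the groupoid $\modC$, one has $C_h\sim C_{h_{R'}}^{-1}\circ C_{h_R}$. Inserting the groupoid identity $W_L\circ W_L^{-1}=1_{M_L}$ between these two factors, the morphism appearing in formula \eqref{e10} for $\eta_G(M,L_R,L_{R'},h)$ is rewritten as
\[
W_{L_{R'}}^{-1}\circ C_h\circ W_{L_R}\;\sim\;B^{-1}\circ A,
\]
where $A:=W_L^{-1}\circ C_{h_R}\circ W_{L_R}$ and $B:=W_L^{-1}\circ C_{h_{R'}}\circ W_{L_{R'}}$ are endomorphisms of $M$ in $\modC$, each satisfying $\theta_4(\tr A)=\theta_4(\tr B)=\alpha$ by the preceding step.

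The crux, and what I expect to be the main obstacle, is the additivity identity
\[
\theta_4(\tr(B^{-1}\circ A))=\theta_4(\tr A)-\theta_4(\tr B)=\alpha-\alpha=0.
\]
One clean route is to invoke the group--homomorphism property of $\tr\co\End_{\modC}(M)\to\Omega_4(G)$ stated in the remark following Proposition \ref{r8}. Alternatively, one can prove it by a direct five-dimensional cobordism over $K(G,1)$ relating the closure of $B^{-1}\circ A$ to the disjoint union $\hat A\sqcup(-\hat B)$ of the closures of $A$ and $-B$: insert a trivial ``bubble'' $W_L\cup_M(-W_L)$ into the closure of $B^{-1}\circ A$ and fill the resulting difference using $W_L\times[0,1]$ to exhibit the needed bordism over $K(G,1)$. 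Once additivity is established, Theorem \ref{r10} immediately yields that $L_R$ and $L_{R'}$ are $\delta(N)$--equivalent.
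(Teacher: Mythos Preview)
Your proposal is correct and follows exactly the route the paper indicates: the paper's entire proof is the one-line statement that Proposition~\ref{r19} ``follows immediately from Theorem~\ref{r10} and Proposition~\ref{r18},'' and you have spelled out precisely those details.

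One small remark: you invoke the group-homomorphism property of $\tr$ from the Remark following \eqref{e7}, but note that the paper explicitly says ``We do not need this fact in the rest of this paper.'' Your alternative direct cobordism argument is therefore the one more in the spirit of the paper, and it works: the closure of $B^{-1}\circ A$ is obtained by gluing the closures $\hat A$ and $-\hat B$ along the common piece $W_L$, and filling $W_L\times[0,1]$ exhibits the bordism over $K(G,1)$ to $\hat A\sqcup(-\hat B)$. In fact this very additivity computation appears later in the paper (in the step ``Eliminating IHX-moves'' of Section~\ref{sec:proof-theor-refqn}, or rather Section~9), so your identification of it as the crux is apt.
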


\subsection{$M$--applicable framed link realizations}
\label{sec:m-applicable-framed}

A framed link realization $R=((V,\rho _V),K,h_V)$ of $\alpha \in
H_4(G)$ is said to be {\em $M$--applicable} if we have
\begin{gather}
  \label{e1}
  (\rho_V)_*(\pi_1V)\subset (\rho_M)_*(\pi_1M).
\end{gather}
This condition is equivalent to that one can apply an $R$--move to one
(and in fact every) $N$--link $L$ in $M$.

If $(\rho_M)_*$ is surjective, then every framed link realization of
$\alpha$ is $M$--applicable.

Now we consider the condition for $\alpha\in H_4(G)$ to be realized by
an $M$--applicable framed link realization.  Set
\begin{gather*}
  G_M:= (\rho_M)_*(\pi_1M)\subset G,
\end{gather*}
and let $j\co G_M\to G$ be the inclusion homomorphism.
Let $j_*\co H_4(G_M)\to H_4(G)$ be the induced homomorphism.

\begin{proposition}
  \label{r22}
  Let $\alpha\in H_4(G)$.  Then there is an $M$--applicable framed
  link realization $R$ of $\alpha$ if and only if $\alpha\in
  j_*(H_4(G_M))$.  In particular, if $(\rho_M)_*$ is surjective, then
  for every $\alpha\in H_4(G)$ there is an $M$--applicable framed
  link realization of $\alpha$.
\end{proposition}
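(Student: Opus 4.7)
The plan is to show that an $M$-applicable framed link realization of $\alpha$ in $G$ is essentially the same data as a framed link realization of some $\beta \in H_4(G_M)$ with $j_*(\beta) = \alpha$, pushed forward along $j$. Throughout I will use that the inclusion $G_M \hookrightarrow G$ makes $K(G_M,1) \to K(G,1)$ a covering map, that $j$ is injective on fundamental groupoids, and that $\theta_4$ and the closure map $\tr$ are natural with respect to group homomorphisms; this naturality will automatically supply the identity $j_*(\beta) = \alpha$ in both directions.

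For the ``if'' implication, I write $\alpha = j_*(\beta)$ with $\beta \in H_4(G_M)$ and apply Theorem \ref{t3} to the group $G_M$ and the class $\beta$ to obtain a framed link realization $((V, \tilde{\rho}_V), K, \tilde{h}_V)$ of $\beta$. Setting $\rho_V = j \circ \tilde{\rho}_V$ produces a realization $R = ((V, \rho_V), K, h_V)$ of $\alpha$ in $G$: the link $K$ stays $G$-trivial because $j$ is injective; $h_V$ stays a $G$-diffeomorphism by post-composing the $G_M$-homotopy defining $\tilde h_V$ with $j$; and $(\rho_V)_*(\pi_1 V) \subset j_*(G_M) = G_M$, so $R$ is $M$-applicable by construction.

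For the ``only if'' implication, suppose $R = ((V, \rho_V), K, h_V)$ is an $M$-applicable realization of $\alpha$. The $M$-applicability together with the covering $K(G_M,1) \to K(G,1)$ supplies, by the standard lifting criterion, a lift $\tilde{\rho}_V \co V \to K(G_M,1)$ of $\rho_V$. Extending this lift across the $2$-handles of $W^V_K$ is possible because the components of $K$ are null-homotopic in $K(G,1)$ and hence lift to null-homotopic loops in $K(G_M,1)$; restricting gives a lift $\tilde{\rho}_{V_K}$. After compatible basepoint choices, the covering homotopy property lifts the homotopy $\rho_V h_V \simeq \rho_{V_K}$ rel $\partial V_K$ to a homotopy in $K(G_M,1)$, which by Proposition \ref{r21} exhibits $h_V$ as a $G_M$-diffeomorphism. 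The resulting $G_M$-realization $\tilde R$ then realizes some class $\tilde\alpha \in H_4(G_M)$ with $j_*(\tilde\alpha) = \alpha$ by naturality of $\theta_4$ and $\tr$. The ``in particular'' clause is immediate: if $(\rho_M)_*$ is surjective, then $G_M = G$ and $j_* = \id$, so every $\alpha \in H_4(G)$ lies trivially in $j_*(H_4(G_M))$.

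The main technical obstacle is the promotion of $h_V$ from a $G$-diffeomorphism to a $G_M$-diffeomorphism in the ``only if'' direction; this reduces to the covering homotopy property applied to the homotopy $\rho_V h_V \simeq \rho_{V_K}$, together with the injectivity of $j$ on fundamental groupoids, which guarantees that the $\pi_1$-conditions needed for the lifts to fit together are preserved.
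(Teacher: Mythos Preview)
Your proof is correct, and the ``if'' direction matches the paper's argument essentially verbatim. A small quibble: in that direction you write that ``$K$ stays $G$--trivial because $j$ is injective,'' but injectivity is irrelevant there; any homomorphism sends null-homotopic loops to null-homotopic loops. Injectivity is only needed in the other direction.

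Your ``only if'' argument takes a genuinely different route from the paper. The paper uses the equivalence between $M$--applicability and the existence of an embedding $f\co V\hookrightarrow M$ with $\rho_M f\simeq\rho_V$; it then invokes Proposition~\ref{r18} to identify $\alpha$ with $\theta_4(\tr(C_h\circ W_{f(K)}))$ for $h=h_V\cup\id_{M\setminus\opint f(V)}$, and observes that this closed $4$--manifold maps to $K(G,1)$ through $K(G_M,1)$ simply because $\rho_M$ does. Your approach stays in $V$ and lifts everything through the covering $K(G_M,1)\to K(G,1)$: lift $\rho_V$, extend over the $2$--handles (using that $G$--trivial implies $G_M$--trivial via injectivity of $j$), and use the covering homotopy property to promote $h_V$ to a $G_M$--diffeomorphism. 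This is clean and self-contained; it avoids the ambient manifold $M$ entirely and makes the naturality of $\theta_4\circ\tr$ do the work. The paper's version is shorter because it has Proposition~\ref{r18} already in hand, but yours is arguably more conceptual. One correction: your citation of Proposition~\ref{r21} is misplaced---the lifted homotopy rel $\partial V_K$ is already the \emph{definition} of a $G_M$--diffeomorphism (see \eqref{e19}); Proposition~\ref{r21} is the converse translation into fundamental-groupoid language, which you do not need here.
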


\begin{proof}
  Let $R=((V,\rho _V),K,h_V)$ be an $M$--applicable framed link
  realization of $\alpha$.  There is an orientation-preserving
  embedding $f\co V\hookrightarrow M$ such that $\rho_M f\simeq
  \rho_V$.  By Proposition \ref{r18}, we have
  \begin{gather*}
    \alpha
    = \theta_4(\tr(C_h\circ W_{f(K)}))\in j_*(H_4(G_M)),
  \end{gather*}
  where $h=h_V\cup\id_{M\setminus\opint f(V)}\co M_{f(K)}\ct M$, since
  the map $C_h\circ W_{f(K)}\to K(G,1)$ factors through
  $K(G_M,1)$.

  Conversely, suppose $\alpha\in j_*(H_4(G_M))$.  Let
  $\tilde{\alpha}\in H_4(G_M)$ be a lift of $\alpha$ along $j_*$,
  i.e., we have $j_*(\tilde{\alpha})=\alpha$.  By Theorem \ref{t3},
  there is a framed link realization
  $\tilde{R}=((V,\tilde{\rho}_V),K,h_V)$ of $\tilde{\alpha}$.  Then
  $R:=((V,\rho _V),K,h_V)$ is a framed link realization of $\alpha$,
  where $\rho_V=K(j,1)\tilde{\rho}_V$.  (Here $K(j,1)\co K(G_M,1)\to
  K(G,1)$ is the map induced by $j$.)  Clearly, $R$ is
  $M$--applicable.
\end{proof}

By Proposition \ref{r19}, for each $\alpha\in j_*(H_4(G_M))$ the notion of
$R$--move up to $\delta(N)$--equivalence does not depend on the choice
of $R$.  We say that two $N$--links in $M$ are related by an {\em
$\alpha $--move} if they are related by an $R$--move for some
$M$--applicable framed link realization $R$ of $\alpha $.

\subsection{Characterization of $G$--diffeomorphisms}
\label{sec:char-g-diff}

Let $\{\alpha_i\}_{i\in I}$ be an indexed set of generators of
$j_*(H_4(G_M))$.  Theorem \ref{general} below characterizes
$G$--diffeomorphism of results of surgery along $N$--links in $M$.

\begin{theorem}
  \label{general}
  Let $L$ and $L'$ be
  $N$--links in $M$.  Then the following conditions are equivalent.
  \begin{enumerate}
  \item $M_L$ and $M_{L'}$ are $G$--diffeomorphic.
  \item $L$ and $L'$ are related by a sequence of $\alpha _i$--moves
    for $i\in I$ and $\delta (N)$--equivalence.
  \end{enumerate}
\end{theorem}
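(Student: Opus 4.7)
The plan is to establish both directions separately. For the \emph{if} direction I would check that each allowed move preserves the $G$--diffeomorphism class of the result of surgery. Stabilizations, handle-slides, and $K_3(N)$--moves yield canonical diffeomorphisms of the surgered manifold, which are automatically $G$--diffeomorphisms since the maps to $K(G,1)$ are all induced by $\rho_M$. For an $\alpha_i$--move the required $G$--diffeomorphism $M_{L\cup f(K)}\ct M_L$ is $h_V\cup\id_{M\setminus\opint f(V)}$, as noted just before Proposition \ref{r18}.

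For the \emph{only if} direction, given a $G$--diffeomorphism $h\co M_L\ct M_{L'}$, I consider the obstruction
\[
\eta := \theta_4(\tr(W_{L'}^{-1}\circ C_h\circ W_L))\in H_4(G).
\]
Every piece of this cobordism inherits its map to $K(G,1)$ from $\rho_M$ and so factors through $K(G_M,1)$; hence $\eta\in j_*(H_4(G_M))$ and we may write $\eta=\sum_i n_i\alpha_i$ with finitely many nonzero integers $n_i$. The strategy is to kill $\eta$ by a sequence of $\alpha_i$--moves and then apply Theorem \ref{r10}.

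The crux is an additivity claim: an $\alpha$--move on $L$ changes $\eta$ by $\alpha$. Given an $M$--applicable realization $R=((V,\rho_V),K,h_V)$ of $\alpha$ and an embedding $f\co V\hookrightarrow M\setminus L$, set $h_R=h_V\cup\id_{M\setminus\opint f(V)}\co M_{L\cup f(K)}\ct M_L$ and $h'=h\circ h_R\co M_{L\cup f(K)}\ct M_{L'}$. Since $W_L\circ W_L^{-1}$ equals the identity in $\modC$, the morphism $W_{L'}^{-1}\circ C_{h'}\circ W_{L\cup f(K)}$ decomposes in $\End_\modC(M)$ as
\[
(W_{L'}^{-1}\circ C_h\circ W_L)\circ(W_L^{-1}\circ C_{h_R}\circ W_{L\cup f(K)}),
\]
whose right factor has trace $\alpha$ by Proposition \ref{r18} and whose left factor has trace $\eta$ by hypothesis. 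Additivity of $\theta_4\circ\tr$ under composition in $\End_\modC(M)$, the remark following \eqref{e7}, then gives the updated obstruction $\eta+\alpha$. I would supply a direct pair-of-pants type $5$--cobordism argument over $K(G,1)$ to justify this additivity and keep the argument self-contained.

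With additivity in hand, performing a finite sequence of $\alpha_i$-- or $(-\alpha_i)$--moves (the latter realized by applying Theorem \ref{t3} to $-\alpha_i$, one for each unit of $n_i$) produces a link $\tilde L$ related to $L$ by $\alpha_i$--moves together with a $G$--diffeomorphism $\tilde h\co M_{\tilde L}\ct M_{L'}$ satisfying $\eta(M,\tilde L,L',\tilde h)=0$. Theorem \ref{r10} then yields $\tilde L\sim_{\delta(N)}L'$, and concatenating with the chain from $L$ to $\tilde L$ gives the required sequence. I expect the additivity step to be the main obstacle; everything else is essentially bookkeeping against Theorems \ref{r10} and \ref{t3} and Proposition \ref{r22}.
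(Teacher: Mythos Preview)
Your proposal is correct and follows essentially the same route as the paper. The paper's proof is terser: it notes that the obstruction $\alpha_{L,L',h}$ lies in $j_*(H_4(G_M))$, expresses it as a sum of copies of $\pm\alpha_i$, and simply asserts that modifying $L$ by $\alpha_i$--moves kills the obstruction so that Theorem~\ref{r10} applies; you have made explicit the additivity step (via the factorization in $\End_\modC(M)$ and the homomorphism property of $\tr$) that the paper leaves implicit.
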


\begin{proof}
  Clearly, (ii) implies (i).  We prove the reverse implication.

  By assumption, there is a $G$--diffeomorphism $h\co M_L\ct M_{L'}$.
  Set
  \begin{gather*}
    \alpha_{L,L',h}:=\theta_4(\tr(W_{L'}^{-1}\circ C_h \circ W_L))
    \in j_*(H_4(G_M)).
  \end{gather*}
  Since $\{\alpha_i\}_{i\in I}$ generates $j_*(H_4(G_M))$, the element
  $\alpha_{L,L',h}$ can be expressed as a sum of copies of
  $\pm\alpha_i$, $i\in I$.  Thus, by modifying the framed link $L$ by
  the $\alpha_i$--moves, we may assume
  $\alpha_{\tilde{L},L',\tilde{h}}=0$ for the framed link $\tilde{L}$
  obtained from $L$ by these moves and an appropriate diffeomorphism
  $\tilde{h}\co M_{\tilde{L}}\ct M_{L'}$.  Then, by Theorem \ref{r10},
  it follows that $\tilde{L}$ and $L'$ are $\delta(N)$--equivalent.
\end{proof}

We expect that there will be some applications of Theorem
\ref{general} to $3$--dimensional Homotopy Quantum Field Theory (HQFT)
with target space $K(G,1)$ \cite{Turaev:HQFT}.

\section{IHX-moves}
\label{sec6}

In this section, we define $Y_2$--claspers in a $3$--manifold, which
are a special kind of claspers introduced in
\cite{Goussarov1,Goussarov2,H:claspers} and used in the theory of
finite type invariants of links and $3$--manifolds
\cite{Bar-Natan,Ohtsuki:book}.  To each clasper a framed link is
associated on which one can perform surgery.  We define an IHX-move on
the framed links associated to the disjoint union of $Y_2$--claspers.
This move preserves the result of surgery up to diffeomorphism.  An
IHX-move is closely related to the IHX-relation in the theory of
finite type invariants.  This move is related to a handle
decomposition of the $4$--torus $T^4$.

\subsection{$Y_2$--claspers}
\label{sec:y2-claspers}

Let $M$ be a compact, connected, oriented $3$--manifold.
We define $Y_2$--claspers in $M$, which is a special kind of tree
claspers.

A {\em $Y_2$--clasper} in $M$ is a subsurface embedded in the interior
of $M$ which is decomposed into four annuli, two disks and five bands
as depicted in Figure \ref{Y2} (a).  \FI{Y2}{(a) $Y_2$--clasper $T$.
(b) Drawing of $T$.} We usually depict a $Y_2$--clasper as a framed
graph as in Figure \ref{Y2} (b) using the blackboard framing
convention.

We associate to a $Y_2$--clasper $T$ in $M$ a $2$--component framed link
$L_T$ in a small regular neighborhood $N(T)$ of $T$ in $M$ as
depicted in Figure \ref{Y2-assoc} (a).
\FI{Y2-assoc}{(a) The framed link $L_T=L_{T,1}\cup L_{T,2}$ associated to the
$Y_2$--clasper $T$.  (b) Another framed link $L_T^\adm$ associated to
$T$.}  Note that the framed link $L_T$ is $\Z  $--null-homologous in
$N(T)$, hence in $M$.  Surgery along the $Y_2$--clasper $T$ is defined
to be surgery along the associated framed link $L_T$.

Figure \ref{Y2-assoc} (b) shows another framed link $L_T^\adm$
associated to $T$, called the {\em associated admissible framed link}
of $T$, which is used in Section \ref{sec:admissible-ihx-moves-1}.

\begin{lemma}
  \label{r2}
  The framed links $L_T^\adm$ and $L_T$ in $N(T)$ are $\delta $--equivalent.
\end{lemma}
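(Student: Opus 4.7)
The plan is to prove the $\delta $--equivalence by producing an explicit sequence of stabilizations and handle-slides transforming $L_T$ into $L_T^\adm$ within the handlebody $N(T)$. Since the difference between the two framed links in Figure~\ref{Y2-assoc} is localized near each of the trivalent vertices of the $Y_2$--clasper, the problem reduces to a universal local calculation and is independent of how $T$ is embedded in $M$.

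Concretely, I would first stabilize $L_T$ by adjoining a pair of $\pm 1$--framed unknots in small balls disjoint from $L_T$. I would then perform a carefully chosen sequence of handle-slides of the two components of $L_T$ over these auxiliary unknots (and, if needed, over each other), which simultaneously shifts the $0$--framings of $L_T$ to $\pm 1$ and cancels the linking of the two components. Tracking the blackboard framings through the sequence produces, up to isotopy, the framed link $L_T^\adm$ in Figure~\ref{Y2-assoc}(b). Since every move used is either a stabilization or a handle-slide, the resulting framed link is $\delta $--equivalent to the original $L_T$.

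As a conceptual sanity check, the lemma can also be approached via Theorem~\ref{r7} applied to $N(T)$: both $L_T$ and $L_T^\adm$ encode the same geometric clasper surgery, so their surgeries give diffeomorphic $3$--manifolds rel boundary, and the fundamental-groupoid diagram \eqref{d1} can be arranged to commute; the only non-trivial condition is the vanishing of the $H_4(\pi _1 W)$--obstruction, which can be verified using the fact that $\pi _1 N(T)$ is free.

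The main obstacle is the explicit framing bookkeeping in the direct approach. Conceptually the lemma is standard clasper calculus and the $\delta $--equivalence is morally expected, but pinning down a precise sequence of Kirby moves and verifying that the blackboard framings balance at each step requires a careful case-by-case check.
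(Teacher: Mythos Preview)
Your direct approach is essentially what the paper does, though the specifics differ. The paper's proof uses a \emph{single} stabilization rather than a pair: starting from the two-component link $L_T$, one adjoins one $\pm 1$--framed unknot and performs two handle-slides to reach an intermediate three-component link $L'_T$, then slides the middle component of $L'_T$ over the other two to arrive at $L_T^\adm$. In particular $L_T^\adm$ has three components, so your description ``shifts the $0$--framings of $L_T$ to $\pm 1$ and cancels the linking of the two components'' is aiming at a target different from the actual $L_T^\adm$. Once you have the figures in front of you the whole argument is five moves, so the bookkeeping obstacle you anticipate is smaller than you fear.

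Your alternative route via Theorem~\ref{r7}, however, contains a real gap. The obstruction $\eta(N(T),L_T,L_T^\adm,h)$ lives in $H_4(\pi_1 W)$ with $\pi_1 W\cong \pi_1 N(T)/N_{L_T}$, not in $H_4(\pi_1 N(T))$; freeness of $\pi_1 N(T)$ tells you nothing about $H_4$ of this quotient. In fact $N(T)$ is a genus-$4$ handlebody and $L_T$ is null-homologous, so if you instead pass to the abelianization (as in Theorem~\ref{t2}) the relevant group is $H_4(\Z^4)\cong\Z$, precisely the home of the IHX obstruction---so this route cannot sidestep a genuine computation. You would also need to verify $N_{L_T}=N_{L_T^\adm}$ before Diagram~\eqref{d1} even makes sense. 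The explicit Kirby-move argument is both shorter and safer here.
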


\begin{proof}
  By using one stabilization and two handle-slides, we obtain from
  $L_T$ the framed link $L'_T$ depicted in Figure \ref{Y2-assoc-2}.
  \FI{Y2-assoc-2}{The framed link $L'_T$.}  Then, by handle-sliding
  the middle component over the other two components in $L'_T$, we
  obtain~$L^\adm_T$.
\end{proof}

\subsection{IHX-claspers and IHX-links}
\label{sec:ihx-claspers-ihx}
Let $\gamma =\gamma _1\cup \dots \cup \gamma _4$ be a trivial string link in the cylinder
$D^2\times [0,1]$, i.e., $\gamma $ is a proper $1$--submanifold of $D^2\times [0,1]$
of the form $\text{(four points in $\opint D^2$)}\times [0,1]$.
Let $N(\gamma )\subset D^2\times [0,1]$ be a small tubular neighborhood of $\gamma $ in
$D^2\times [0,1]$, and set
\begin{gather*}
  V_4 = \overline{(D^2\times [0,1])\setminus N(\gamma )}.
\end{gather*}

Let $T_{IHX}=T_1\cup T_2\cup T_3\subset V_4$ be the disjoint union of three
$Y_2$--claspers $T_1,T_2,T_3$ as depicted in Figure \ref{IHX}.
\FI{IHX}{$T_{IHX}\subset V_4$}
We call $T_{IHX}$ the {\em IHX-clasper}.

\begin{theorem}
  \label{t-IHX}
  Surgery along $T_{IHX}$ preserves the manifold $V_4$.  More
  precisely,
  There is a diffeomorphism
  \begin{gather}
    \label{e25}
    h_V\co (V_4)_{T_{IHX}}\ct V_4
  \end{gather}
  restricting to $\id_{\partial V_4}$.  (Note that such a diffeomorphism is
  unique up to isotopy relative to $\partial V_4$.)
\end{theorem}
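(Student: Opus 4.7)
The plan is to show that the $6$--component framed link $L_{T_{IHX}} := L_{T_1}\cup L_{T_2}\cup L_{T_3}$ in $V_4$ is $\delta$--equivalent to the empty framed link, which by definition produces a sequence of stabilizations and handle-slides realizing a boundary-fixing diffeomorphism $h_V\co(V_4)_{T_{IHX}}\ct V_4$. Because such an $h_V$ is unique up to isotopy relative to $\partial V_4$, this is all we need.

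First I would apply Lemma \ref{r2} to each $T_i$, replacing the $2$--component link $L_{T_i}$ by the $\delta$--equivalent $3$--component admissible framed link $L^\adm_{T_i}$. This turns the problem into showing that the $9$--component admissible link $L^\adm_{T_{IHX}} := L^\adm_{T_1}\cup L^\adm_{T_2}\cup L^\adm_{T_3}$ is $\delta$--equivalent to the empty link; admissibility keeps the linking matrix simple and makes it easy to monitor framings under handle-slides. Second, I would exploit the symmetry of the IHX--clasper: the three $Y_2$--claspers share the same four leaves, one on each strand of $\gamma = \gamma_1\cup\dots\cup\gamma_4$, and differ only in how their two internal trivalent vertices route among those leaves. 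Pairing corresponding components of $L^\adm_{T_1}$, $L^\adm_{T_2}$, $L^\adm_{T_3}$ across the three configurations, an explicit sequence of handle-slides reduces each pair to an isolated $\pm1$--framed unknot, which is then deleted by an inverse stabilization. This is the Kirby-theoretic incarnation of the classical IHX identity for tree claspers in the calculus of Goussarov and Habiro \cite{Goussarov2,H:claspers}, upgraded from the usual $Y_3$--equivalence level to actual $\delta$--equivalence because surgery along $L_{T_{IHX}}$ is meant to return the ambient manifold on the nose.

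The main obstacle will be the explicit bookkeeping of the Kirby moves. Even with admissibility, the $9$ components have a nontrivial linking pattern inherited from the three overlaid trivalent graphs, and one must verify that each handle-slide in the proposed sequence really does yield the claimed cancellation, including framing corrections. A conceptually cleaner alternative, which I would pursue in parallel and which I expect to be the route consistent with the rest of the paper, is to construct a collection of properly embedded $2$--disks in $V_4\times[0,1]$ whose boundaries agree (up to isotopy) with $L_{T_{IHX}}\times\{1\}$, so that the $2$--handles of $W_{L_{T_{IHX}}}$ geometrically cancel against $1$--handles of a suitable handle decomposition of $V_4\times[0,1]$. This gives the trivial cobordism from $V_4$ to $(V_4)_{T_{IHX}}$ and hence the diffeomorphism $h_V$ directly; moreover, this picture is precisely the local model that underlies the new handle decomposition of the $4$--torus $T^4$ developed in Section \ref{sec7}, which makes it plausible that the actual proof proceeds via a geometric handle-cancellation argument rather than through raw Kirby diagrammatics.
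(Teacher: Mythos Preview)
Your proposal outlines two strategies but completes neither, and neither matches what the paper actually does.  The first strategy---reducing $L^\adm_{T_{IHX}}$ to the empty link by an explicit sequence of handle-slides and inverse stabilizations---is in principle sound, but you identify the ``explicit bookkeeping of the Kirby moves'' as the main obstacle and do not carry it out; the claim that ``pairing corresponding components \ldots\ reduces each pair to an isolated $\pm1$--framed unknot'' is exactly the content of the theorem and is left unproved.  The second strategy (finding $2$--disks in $V_4\times[0,1]$ to cancel the $2$--handles) is also only sketched, and your guess that Section~\ref{sec7} supplies this argument is backwards: the handle decomposition of $T^4$ in Section~\ref{sec7} \emph{uses} Theorem~\ref{t-IHX} rather than proving it.

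The paper's proof avoids Kirby diagrammatics entirely by exploiting the fact that $V_4$ is the complement of a trivial $4$--string link $\gamma$ in $D^2\times[0,1]$: it suffices to show that surgery along $T_{IHX}$ takes $\gamma$ to a string link isotopic (rel endpoints) to $\gamma$.  Lemma~\ref{l23} computes, via two clasper moves, that surgery along $T_1$ turns $\gamma$ into an explicit pure braid $\beta_1$.  The configurations $(\gamma,T_2)$ and $(\gamma,T_3)$ are conjugates of $(\gamma,T_1)$ by powers of a cyclic braid $\alpha$, so $\gamma_{T_2}$ and $\gamma_{T_3}$ are the corresponding conjugate braids $\beta_2,\beta_3$.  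One then checks directly that the product $\beta_1\beta_2\beta_3$ is trivial---this is the Witt--Hall commutator identity in the pure braid group.  This route replaces nine framed components and their linking data with a single braid computation, and completely sidesteps the bookkeeping obstacle you flagged.
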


Theorem \ref{t-IHX} is closely related to the IHX relation in the
theory of finite type invariants.  Similar results, with different
configurations of $Y_2$--claspers, have been obtained in \cite{Goussarov2,CT}.

To prove Theorem \ref{t-IHX}, we need the following.

\begin{lemma}
  \label{l23}
  Let $T_1\subset V_4$ be the first component of $T_{IHX}$, see Figure
  \ref{r23} (a).  By surgery along $T_1$, we obtain from $\gamma $ a
  pure braid $\beta _1:=\gamma _{T_1}$ as depicted in Figure \ref{r23}
  (b).  (Here string links are considered to be framed, and we use the
  blackboard framing convention.)  \FI{r23}{(a) The $Y_2$--clasper
  $T_1$ and the trivial string link $\gamma $.  (b) The pure braid
  $\beta _1$.}
\end{lemma}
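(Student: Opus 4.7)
The plan is to verify the identity $\gamma_{T_1}=\beta_1$ by a direct picture-based computation on the associated framed link of $T_1$.

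First, I would write out the $2$--component framed link $L_{T_1}\subset N(T_1)\subset V_4$ associated to the $Y_2$--clasper $T_1$, following the convention of Figure \ref{Y2-assoc}(a): each of the two nodes of the I-shape tree contributes an unknotted circle, and the five bands together with the four leaves dictate how these two components are linked with each other and, through the leaves, with the strands $\gamma_1,\dots,\gamma_4$. Since each leaf of $T_1$ is a meridian of a single strand $\gamma_i$, each leaf bounds an embedded disk in the ambient ball $D^2\times[0,1]$ meeting $\gamma$ transversely once. Therefore, regarded in the ball $D^2\times[0,1]$, the framed link $L_{T_1}$ is of Borromean type in the sense of Habiro's clasper calculus, and surgery along it yields a manifold diffeomorphic to $D^2\times[0,1]$ via a diffeomorphism $h_1\co (D^2\times[0,1])_{T_1}\ct D^2\times[0,1]$ which is unique up to isotopy rel boundary.

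The core step is then to compute $h_1(\gamma)$ and match it with $\beta_1$. The cleanest route is to apply Kirby moves: first replace $L_{T_1}$ by a pair of small $\pm1$--framed unknots connected to the strands of $\gamma$ by clasping bands, and then slide and isotope to recognize the result as an iterated commutator of meridians acting on one strand. Because the tree of $T_1$ is I-shaped with its two nodes separating the four leaves into two pairs $(\ell_a,\ell_b)$ and $(\ell_c,\ell_d)$, the standard clasper-to-commutator formula predicts that surgery converts $\gamma$ into the pure braid realizing the nested commutator $[[m_a,m_b],[m_c,m_d]]$ of meridians inserted along the appropriate strand. A direct comparison with Figure \ref{r23}(b) should then show this agrees with $\beta_1$.

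The main obstacle is bridging the pictorial data in Figure \ref{r23}(b) and the algebraic description in terms of a nested commutator. Rather than importing a general formula as a black box, I would perform an explicit sequence of handle-slides and isotopies on $L_{T_1}\cup\gamma$, reducing each node of $T_1$ independently to a Borromean surgery (which, when one leaf is a meridian and the remaining two leaves link $\gamma$, changes the string link by a single commutator of meridians) and then composing the two resulting local changes to obtain the double commutator. Tracking the framings and the orientation conventions on the leaves through this reduction is the delicate part, but the outcome is forced, since both $\gamma_{T_1}$ and the predicted nested commutator must agree in the metabelian quotient of $P_4$ and in fact in the full pure braid group by Habiro's uniqueness statement for $Y_k$--surgery. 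The resulting pure braid is then exactly the $\beta_1$ of Figure \ref{r23}(b).
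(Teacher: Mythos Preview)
Your approach is valid and conceptually parallel to the paper's, but the execution differs in a useful way to note.  The paper stays entirely in the clasper formalism: it applies Habiro's move~2 once to split the $Y_2$--clasper $T_1$ at its middle edge into two linked $Y$--claspers, and then applies move~10 twice (once for each $Y$--clasper) to read off the resulting string link as $\beta_1$ directly from the picture.  This is three moves and an isotopy, with no passage through the associated framed link $L_{T_1}$ and no Kirby calculus.  Your plan---translate to $L_{T_1}$, reduce each node to a Borromean surgery, and compose the two commutators---is essentially a rederivation of moves~2 and~10 at the level of framed links; it works, but it is longer, and you would still need to carry out the handle-slides explicitly rather than describe them.

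One genuine weak point: your final paragraph appeals to agreement in the metabelian quotient of $P_4$ and to an unspecified ``uniqueness statement for $Y_k$--surgery'' to conclude that $\gamma_{T_1}$ equals the predicted nested commutator in $P_4$.  Neither of these closes the argument.  Agreement modulo $[P_4,[P_4,P_4]]$ does not determine a pure braid, and there is no off-the-shelf uniqueness theorem that pins down $\gamma_{T_1}$ without actually doing the computation.  You should drop that sentence and instead finish the explicit Kirby (or clasper) reduction; the identity $\gamma_{T_1}=\beta_1$ is a concrete isotopy of string links, and the proof has to exhibit it.
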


\begin{proof}
  By clasper calculus (see \cite{H:claspers}) we can transform
  $(\gamma ,T_1)$ into $(\beta _1,\emptyset)$ as follows.  We use the
  two clasper operations, which do not change the isotopy class of the
  result of surgery, depicted in Figure \ref{m2m10}.
  \FI{m2m10}{Moves 2 and 10 from \cite{H:claspers}}
  First, apply move 2 to the clasper
  $T_1$ as shown in Figure~\ref{T1-braid}~(a).  \FI{T1-braid}{(a)
  $T_1$ after move 2.  (b) $\gamma$ after surgery along $T_1$.}  Then,
  apply move 10 twice to obtain Figure~\ref{T1-braid}~(b), which is
  isotopic to $\beta_1$.
\end{proof}

\begin{proof}[Proof of Theorem \ref{t-IHX}]
  First, we see that the pairs $(\gamma ,T_2)$ and $(\gamma ,T_3)$ are conjugate
  with $(\gamma ,T_1)$ as follows.
  Let $\alpha ^{\pm 1}$ be the braids depicted in Figure \ref{a}.
  \FI{a}{Braids $\alpha$ and $\alpha^{-1}$}
  Then we have
  \begin{gather*}
    (\gamma ,T_2)\cong \alpha ^2(\gamma ,T_1)\alpha ^{-2},\\
    (\gamma ,T_3)\cong \alpha (\gamma ,T_1)\alpha ^{-1}.
  \end{gather*}
  Here the composition $\beta \beta '$ of two tangles $\beta$ and $\beta'$ possibly with claspers
  is obtained by stacking $\beta $ on the top of $\beta '$.

  Then, the result $\gamma _{T_2}$ from $\gamma $ by surgery along $T_2$ is the
  conjugate
  \begin{gather*}
    \gamma _{T_2} \cong \alpha ^2 \gamma _{T_1}  \alpha ^{-2}\cong \alpha ^2\beta _1\alpha ^{-2}=:\beta '_2,
  \end{gather*}
  where we used $\gamma _{T_1}\cong\beta _1$ (Lemma \ref{l23}).
  Similarly, we have
  \begin{gather*}
    \gamma _{T_3} \cong \alpha  \gamma _{T_1}  \alpha ^{-1}\cong \alpha \beta _1\alpha ^{-1}=:\beta '_3.
  \end{gather*}

  These are pure braids depicted below: \FIG{t-IHX-1}.

  By isotopy we obtain the braids $\beta_2$ and $\beta_3$
  \FIG{t-IHX-2} Now, one can check that the composition
  $\beta_1\beta_2\beta_3$ is isotopic to the trivial string link
  $\gamma$.  Thus, surgery along $T_{IHX}$ preserves $V_4$.
\end{proof}

\begin{remark}
  Theorem \ref{t-IHX} may be regarded as a topological version of the
  Witt-Hall identity
  \begin{gather*}
    [z,[y^{-1},x]]^{y^{-1}}\cdot [y,[x^{-1},z]]^{x^{-1}}
    \cdot [x,[z^{-1},y]]^{z^{-1}}=1
  \end{gather*}
  in the free group $\langle x,y,z\rangle$ inside the $4$-strand pure braid group, where $x,y,z$ are defined
  by \FIG{F3-PB} and $[x,y]=xyx^{-1}y^{-1}$ is the commutator.
\end{remark}

\subsection{IHX-moves}
\label{sec:ihx-moves}

Let $L$ be a framed link in a $3$--manifold $M$.  Let
$f\co V_4\hookrightarrow M\setminus L$ be an orientation-preserving embedding.
Then the framed links $L$ and $L\cup f(L_{IHX})$ are said to be related
by an {\em IHX-move}.

An IHX-move preserves the result of surgery.  More precisely, there is
a diffeomorphism
\begin{gather*}
  h\co M_{f(L_{IHX})}\rightarrow M
\end{gather*}
restricting to $\id_{M\setminus \opint{f(V)}}$, which is unique up to isotopy
through such diffeomorphisms.  Indeed, the diffeomorphism $h$ is
obtained by gluing the composite
\begin{gather*}
  f(V)_{f(L_{IHX})}\cong V_{L_{IHX}} \xtoc{h_V}V\cong f(V)
\end{gather*}
and $\id_{M\setminus \opint f(V)}$.

Note that if $L'$ is obtained from a $\Z  $-(resp.\ $\Q$-)null-homologous
framed link $L$ by an IHX-move, then $L'$ is
again $\Z$-(resp.\ $\Q$-)null-homologous.

\section{A handle decomposition of $T^4$}
\label{sec7}

In this section, we construct a new handle decomposition of the
$4$--torus $T^4$ involving the IHX-link.

Consider the framed link with dotted circles obtained from the
IHX-link $L_{IHX}\subset V_4\subset D^2\times [0,1]$ as follows.  We embed $D^2\times [0,1]$
into $S^3$, close the trivial string link $\gamma $ in a natural way to
obtain an unlink $J=J_1\cup \dots \cup J_4$, and put a dot on each component of
$J$.   Here,
each $Y_2$--clasper $T_i$ of $T_{IHX} = T_1\cup T_2\cup T_3$ is
regarded as its associated framed link which we denote by $K_i \cup
K_i'$.  The framed link
\begin{gather*}
  (J_1\cup \dots \cup J_4)\cup (K_1\cup K_1'\cup K_2\cup K_2'\cup K_3\cup K_3') \subset  S^3
\end{gather*}
gives a handlebody $W^{(2)}$ consisting of one $0$--handle
$W^{(0)}=B^4$, four $1$--handles $B_1,\ldots ,B_4$ corresponding to
$J_1,\ldots ,J_4$, and six $2$--handles $H_1,H_1',H_2,H_2',H_3,H_3'$
corresponding to $K_1, K_1', K_2, K_2', K_3, K_3'$.  We set
\begin{gather*}
  W^{(1)}=W^{(0)}\cup B_1\cup B_2\cup B_3 \cup B_4,\\
  W^{(2)}= W^{(1)} \cup H_1\cup H_1'\cup H_2\cup H_2' \cup H_3 \cup H_3'.
\end{gather*}

Since surgery along $K_1\cup K_1'\cup K_2\cup K_2'\cup K_3\cup K_3'$
preserves the result of surgery, we have
\begin{gather*}
  \partial W^{(2)}\cong \partial W^{(1)} \cong \sharp^4(S^2\times S^1).
\end{gather*}
Hence we can attach four $3$--handles and one $4$--handle to obtain an
oriented, closed $4$--manifold $W$.

\begin{theorem}
  \label{IHX-T4}
  The $4$--manifold $W$ is diffeomorphic to the $4$--torus $T^4$.  Thus
  the framed link obtained from Figure \ref{IHX-1} by replacing
  $Y_2$--claspers with the associated framed link presents a handle
  decomposition of $T^4$.
  \FI{IHX-1}{Handle decomposition for the $4$--manifold $W$.  Here,
  each $Y_2$--clasper represents the associated $2$--component framed
  link.}
\end{theorem}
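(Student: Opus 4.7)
The plan is to match the given handle decomposition of $W$ with the standard product handle decomposition of the 4-torus $T^4=(S^1)^4$, which has $\binom{4}{k}$ handles of index $k$. Our diagram contains $1+4+6+4+1=16$ handles, with the six 2-handles playing the role of the six product 2-cells, each attached along a commutator $[x_i,x_j]$ in $\pi_1$ of the 1-skeleton.

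First I would use Theorem \ref{t-IHX} to compute $\partial W^{(2)}$. We have $\partial W^{(1)}=\sharp^4(S^2\times S^1)$, in which $V_4$ sits as a genus-$4$ Heegaard handlebody. Since surgery on $V_4$ along $L_{IHX}$ yields $V_4$ again by Theorem \ref{t-IHX}, the induced surgery on $\partial W^{(1)}$ preserves $\sharp^4(S^2\times S^1)$, so the four 3-handles and one 4-handle really do cap off the boundary in the standard way.

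Next I would identify the homotopy classes in $\pi_1W^{(1)}=F_4=\langle x_1,x_2,x_3,x_4\rangle$ of the attaching circles of the six 2-handles, where $x_i$ is the meridian of the dotted circle $J_i$. Reading off Figure \ref{Y2-assoc}(a), a $Y_2$-clasper with leaf partition $(\{a,b\},\{c,d\})$ should contribute two $0$-framed components freely homotopic to the commutators $[x_a,x_b]$ and $[x_c,x_d]$ in $\pi_1V_4$. The three IHX partitions $\{\{1,2\},\{3,4\}\}$, $\{\{1,3\},\{2,4\}\}$, $\{\{1,4\},\{2,3\}\}$ together realize all six pairs $\{i,j\}\subset\{1,2,3,4\}$, matching the standard relations in $\pi_1 T^4\cong\Z^4$. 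Then, via handle slides and isotopies in $\partial W^{(1)}$ bringing the attaching circles into the usual Borromean commutator configuration, the 2-skeleton is seen to be diffeomorphic to that of the standard Kirby diagram of $T^4$ (as in Akbulut or Gompf--Stipsicz); the Laudenbach--Po\'enaru theorem on uniqueness of 3-handle attachments to $\sharp^4(S^2\times S^1)$ then yields $W\cong T^4$.

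The principal obstacle will be the explicit homotopy computation for the two components of the associated framed link $L_T$ of a $Y_2$-clasper: one must carefully trace the components through the clasper-to-link conversion and verify the commutator identification. A secondary subtlety is that mere agreement of free homotopy classes and framings may not suffice for diffeomorphism of 2-skeletons, so additional handle slides among the six 2-handles may be required to achieve the specific linked embedding in $\partial W^{(1)}$ expected for the standard $T^4$ diagram before invoking Laudenbach--Po\'enaru.
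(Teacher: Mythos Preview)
Your plan diverges from the paper's and carries a genuine gap precisely at the point you label a ``secondary subtlety''.

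The paper argues in the opposite direction: it begins with Akbulut's known handle decomposition of $T^4$ and exhibits an explicit sequence of band-slides among the six $2$--handles (four pairs of handle-slides, each sliding one $2$--handle twice over another), followed by isotopies, transforming Akbulut's diagram into the IHX diagram of Figure~\ref{IHX-1}. Since these moves preserve the diffeomorphism type of the $4$--manifold, this yields $W\cong T^4$ directly, with no appeal to Laudenbach--Po\'enaru and no homotopy-class bookkeeping.

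Your proposal instead tries to recognize $W$ from its handle data. But agreement of free homotopy classes and framings of the six attaching circles in $\partial W^{(1)}\cong\sharp^4(S^2\times S^1)$ does \emph{not} determine the isotopy class of the attaching framed link, and hence does not determine $W^{(2)}$ up to diffeomorphism. There is no general principle that promotes such homotopy information to an isotopy or to a sequence of $2$--handle slides; exhibiting those slides is exactly the work the paper carries out. Thus the ``additional handle slides among the six $2$--handles'' that you postpone are not a secondary subtlety but the entire content of the proof, and without them the argument does not close. Note also that your commutator identification itself is only asserted, not verified: the three $Y_2$--claspers are shown in the proof of Theorem~\ref{t-IHX} to be braid-conjugates of one another, with surgery governed by the Witt--Hall identity in three generators, so the reading of each $L_{T_i}$ as a pair $[x_a,x_b],\,[x_c,x_d]$ would in any case require a careful check against Figure~\ref{Y2-assoc}(a) before it could be used.
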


\begin{proof}
  We start from the following handle decomposition of $T^4$ obtained
  by Akbulut in \cite[Figure 4.5]{Akbulut}.  \FIG{T4-akbulut}

  We perform a sequence of handle-slides on the six $2$--handles,
  i.e., on the link $k= k_1\cup k_1'\cup k_2 \cup k_2' \cup k_3 \cup
  k_3'$.

  Slide $k_1'$ twice over $k_3'$ as follows.
  \FIG{BS-1}

  Slide $k_1$ twice over $k_3$ as follows.
  \FIG{BS-2}

  Slide $k_3$ twice over $k_2$ as follows.
  \FIG{BS-3-new}

  Slide $k_2'$ twice over $k_1'$ as follows.
  \FIG{BS-4}

  After isotopy, we obtain the following.
  \FIG{S-1}

  By isotopy, the three $2$--component links can be separated as follows.
  \FIG{S-1b}

  The following shows the result after rearranging the dotted
  circles.
  \FIG{S-1c}

  In clasper calculus this corresponds to the following.
  \FIG{C-T4-2}

  Now, scale down the outermost dotted circle by isotopy passing under
  the second one until it becomes the second circle.  This yields
  Figure \ref{IHX-1}.
\end{proof}

\section{Kirby calculus for $\Q$--null-homologous framed links}
\label{sec8}

Let $M$ be a compact, connected, oriented $3$--manifold, and let
$P=\{p_1,\ldots ,p_t\}\subset \partial M$ be as in Section \ref{sec2}.  In this section,
we consider the case where
\begin{gather*}
  N= \ker(\pi _1M\rightarrow H_1(M,\Q)).
\end{gather*}
The quotient
\begin{gather*}
  G=\pi _1M/N\cong H_1M/(\text{torsion})
\end{gather*}
is a free abelian group.  We fix an identification
\begin{gather*}
  G=\Z  ^r,
\end{gather*}
where $r=\rank H_1M$.

\subsection{The homology group $H_4(\Z  ^r)$}
\label{sec:h4r}

In the following, we often identify $H_1(G)=H_1(\Z^r)$ with $G=\Z^r$.

As is well known, the Pontryagin product (see e.g.\ \cite{Hatcher})
\begin{gather*}
  H_1(\Z^r)\otimes H_1(\Z^r)\otimes H_1(\Z^r)\otimes H_1(\Z^r)\longrightarrow H_4(\Z^r)
\end{gather*}
induces an isomorphism
\begin{gather}
  \label{e23}
  p\co \bigwedge^4H_1(\Z  ^r)\ct H_4(\Z  ^r).
\end{gather}

Define $y_1,\ldots ,y_4\in H_1T^4$ by
\begin{gather*}
  y_1=[S^1\times \pt\times \pt\times \pt],\;\dots ,\;
  y_4=[\pt\times \pt\times \pt\times S^1].
\end{gather*}
Then $y_1,\ldots ,y_4$ generate $H_1T^4\cong\Z  ^4$.
We have
\begin{gather}
  \label{e28}
  p(y_1\wedge\dots \wedge y_4)=[T^4],
\end{gather}
where $[T^4]\in H_4T^4$ is the fundamental class.

The following lemma follows from the definition of the Pontryagin
product.

\begin{lemma}
  \label{r24}
  Let $\rho _{T^4}\co T^4\rightarrow K(\Z  ^r,1)$ be a map.  Then we have
  \begin{gather*}
    (\rho _{T^4})_*([T^4]) = p((\rho _{T^4})_*(y_1)\wedge\dots \wedge(\rho _{T^4})_*(y_4))\in H_4(\Z  ^r).
  \end{gather*}
\end{lemma}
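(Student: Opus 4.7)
The plan is to derive the lemma from naturality of the Pontryagin product together with \eqref{e28}. Since the isomorphism $p$ of \eqref{e23} is by definition the iterated Pontryagin product, the formula to prove reduces to the statement that $(\rho _{T^4})_\ast$ sends the Pontryagin product $y_1\cdot y_2\cdot y_3\cdot y_4\in H_4(T^4)$ to the Pontryagin product of the images $(\rho _{T^4})_\ast(y_i)\in H_1(\Z ^r)$.

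First I would reduce to the case where $\rho _{T^4}$ is a continuous group homomorphism between tori.  Realize $K(\Z ^r,1)$ as the topological abelian group $T^r$.  Since $T^4=K(\Z ^4,1)$, standard Eilenberg--Mac Lane theory gives a bijection
\[
  [T^4,T^r]=H^1(T^4;\Z ^r)=\operatorname{Hom}(\Z ^4,\Z ^r),
\]
with each homotopy class represented by the Lie group homomorphism induced by the corresponding $\phi \in \operatorname{Hom}(\Z ^4,\Z ^r)$.  Applying this with $\phi =(\rho _{T^4})_\ast $, I may replace $\rho _{T^4}$ up to homotopy by the induced Lie group homomorphism $F\co T^4\to T^r$; this replacement does not affect the induced map on $H_\ast $.

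Second I would invoke naturality of the Pontryagin product.  Since $F$ is a homomorphism of topological groups, it strictly intertwines the multiplication maps of $T^4$ and $T^r$, and hence $F_\ast $ is a ring homomorphism with respect to the Pontryagin products on $H_\ast (T^4)$ and $H_\ast (T^r)=H_\ast (\Z ^r)$.  Applying $F_\ast $ to the identity $[T^4]=p(y_1\wedge\dots\wedge y_4)=y_1\cdot y_2\cdot y_3\cdot y_4$ from \eqref{e28} yields
\[
  F_\ast ([T^4])=F_\ast (y_1)\cdot F_\ast (y_2)\cdot F_\ast (y_3)\cdot F_\ast (y_4)=p\bigl(F_\ast (y_1)\wedge\dots\wedge F_\ast (y_4)\bigr),
\]
which is the desired formula since $F_\ast =(\rho _{T^4})_\ast $ on homology.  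The only point that requires any care is the reduction in the first paragraph: naturality of the Pontryagin product needs an H-map rather than an arbitrary continuous map, so one must use that $T^r$ is already a $K(\Z ^r,1)$ and that $\rho _{T^4}$ is homotopic to a Lie group homomorphism; the rest is formal.
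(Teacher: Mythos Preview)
Your proposal is correct and follows essentially the same approach as the paper's proof: both use \eqref{e28}, naturality of the Pontryagin product, and the fact that $\rho_{T^4}$ is homotopic to a Lie group homomorphism $T^4\to T^r$. You simply spell out in more detail the two points the paper records tersely, namely why naturality of the Pontryagin product requires an H-map and why such a homomorphism exists in the given homotopy class.
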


\begin{proof}
  We have
  \begin{gather*}
    \begin{split}
      (\rho _{T^4})_*([T^4])
      &=(\rho _{T^4})_*(p(y_1\wedge\dots \wedge y_4))\quad \quad \quad \quad
      \quad \text{by \eqref{e28}}\\
      &= p((\rho _{T^4})_*(y_1)\wedge\dots \wedge(\rho _{T^4})_*(y_4))\quad \text{by
      naturality of $p$}.
    \end{split}
  \end{gather*}
  Here we used the fact that $\rho _{T^4}\co T^4\rightarrow K(\Z  ^r,1)=T^r$ is
  homotopic to a Lie group homomorphism.
\end{proof}

\subsection{Effect of an IHX-move in $H_4(\Z  ^r)$}
\label{sec:effect-an-ihx}

As in Section \ref{sec6}, let $V=V_4$ be a handlebody of
genus $4$ obtained from the cylinder $D^2\times [0,1]$ by removing the
interiors of the tubular neighborhood of a trivial $4$--component
string link $\gamma =\gamma _1\cup \dots \cup \gamma _4$.  For $i=1,\ldots ,4$, let $x_i\in H_1V$
be the meridian to $\gamma _i$.

Suppose that we are given a $\Z  ^r$--manifold $(M,\rho _M)$ such that
$(\rho _M)_*\co \pi _1M\rightarrow \Z  ^r$ is surjective.

Let $y_1,\ldots ,y_4\in H_1M$.
Let $f\co V\hookrightarrow M$ be an orientation-preserving
embedding such that $f_*(x_i)=y_i$, $i=1,\ldots ,4$.

Set $\rho _V=\rho _M f\co V\rightarrow K(\Z  ^r,1)$.  Then $(V,\rho _V)$ is a $\Z  ^r$--manifold.

Recall that $L_{IHX}$ denotes the IHX-link in $V$.
Set $L=f(L_{IHX})$, which is a $\Z  $--null-homologous framed link in
$M$.  The diffeomorphism $h_V\co V_{L_{IHX}}\ct V$ naturally extends to
a diffeomorphism
\begin{gather*}
  h=h_V\cup \id_{M\setminus \opint f(V)}\co M_{L}\ct M.
\end{gather*}

The following result describes the effect of an IHX-move on the
homology class in $H_4(\Z  ^r)$.

\begin{proposition}
  \label{r14}
  In the above situation, we have
  \begin{gather}
    \label{e27}
    \theta _4(\tr(C_h\circ W_L))
    = \pm p(y_1\wedge\dots \wedge y_4)
    \in H_4(\Z  ^r).
  \end{gather}
\end{proposition}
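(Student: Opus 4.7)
The plan is to reduce the computation to the handlebody $V$ via Proposition \ref{r8}, identify the resulting closed $4$--manifold with $T^4$ using Theorem \ref{IHX-T4}, and conclude via Lemma \ref{r24}.

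First, let $M_0 = \overline{M \setminus f(V)}$, viewed as a $\Z^r$--manifold cobordism between $\partial V$ and $\partial M$ equipped with the restriction of $\rho_M$. The functor $F_{M_0} \co \modC_{\partial V} \to \modC_{\partial M}$ of Section \ref{sec:functor-induced-3} sends the endomorphism cobordism $C_{h_V} \circ W^V_{L_{IHX}} \co V \to V$ to $C_h \circ W_L \co M \to M$ (using $h = h_V \cup \id_{M \setminus \opint f(V)}$ and $L = f(L_{IHX})$). Proposition \ref{r8} then yields
\[
  \tr(C_h \circ W_L) = \tr(C_{h_V} \circ W^V_{L_{IHX}}) \in \Omega_4(\Z^r),
\]
so it suffices to compute $\theta_4$ of the right-hand side.

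Next, I would identify the closed $4$--manifold $X := \widehat{C_{h_V} \circ W^V_{L_{IHX}}}$ with $T^4$. The manifold $X$ inherits a handle decomposition with one $0$--handle and four $1$--handles (from $V \times I$), six $2$--handles (attached along $L_{IHX}$), and four $3$--handles and one $4$--handle coming from the closure operation that glues the two copies of $V$ on $\partial(C_{h_V} \circ W^V_{L_{IHX}})$. Since $V_{L_{IHX}} \cong V$, the boundary after the $2$--handles is $\sharp^4(S^2 \times S^1)$, so the closure by $3$- and $4$-handles is unique up to diffeomorphism by Laudenbach--Poenaru. This handle diagram is precisely that of Figure \ref{IHX-1}, so $X \cong T^4$ by Theorem \ref{IHX-T4}. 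Under this identification, the generators $y_1, \ldots, y_4 \in H_1 T^4$ appearing in Lemma \ref{r24} correspond to loops traversing the respective $1$--handles once, i.e., to the meridians $x_1, \ldots, x_4$ of $\gamma_1, \ldots, \gamma_4$ in $V \subset X$. Since $\rho_X|_V = \rho_V = \rho_M \circ f$, we have $(\rho_X)_*(y_i) = (\rho_V)_*(x_i) = (\rho_M)_*(f_*(x_i)) = y_i \in \Z^r$. Applying Lemma \ref{r24} then gives
\[
  \theta_4(\tr(C_{h_V} \circ W^V_{L_{IHX}})) = (\rho_X)_*([X]) = \pm p(y_1 \wedge \cdots \wedge y_4) \in H_4(\Z^r).
\]

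The main obstacle will be rigorously matching the abstract closure $\widehat{\cdot}$ (which glues the two copies of $V$ in $\partial(C_{h_V} \circ W^V_{L_{IHX}})$ via $h_V$) with the capping-off of Figure \ref{IHX-1} by $3$- and $4$-handles. The key observation is that identifying the two copies of $V$ via a diffeomorphism restricting to the identity on $\partial V$ is equivalent to attaching a copy of $V \times I$ along its (doubled) boundary, and $V \times I$ admits a dual handle decomposition with four $3$--handles and one $4$--handle. The $\pm$ sign in \eqref{e27} reflects the orientation of the resulting diffeomorphism $X \cong T^4$.
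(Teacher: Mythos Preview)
Your argument is correct and reaches the same conclusion, but the middle step differs from the paper's route. You apply Proposition~\ref{r8} once to pass from $M$ to $V$, then directly identify the closure $\widehat{C_{h_V}\circ W^V_{L_{IHX}}}$ with the $4$--manifold $W$ of Section~\ref{sec7} by observing that (i) as a $4$--manifold with boundary, $C_{h_V}\circ W^V_{L_{IHX}}\cong C_V\cup(\text{six $2$--handles})\cong W^{(2)}$, and (ii) the closure amounts to gluing in another copy of $C_V$, which dually contributes four $3$--handles and one $4$--handle, so Laudenbach--Poenaru forces $\widehat{C_{h_V}\circ W^V_{L_{IHX}}}\cong W$. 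The paper instead applies Proposition~\ref{r8} a second time to pass from $V$ to its double $Y=V\cup_{\partial V}(-V)$, then uses the bordism $C_Y\simeq Z\circ(-Z)$ (with $Z$ the $4$--dimensional $1$--handlebody bounded by $Y$) together with the trace identity~\eqref{e7} to rewrite $\tr(C_{h_Y}\circ W^Y_L)$ as the closed manifold $(-Z)\circ C_{h_Y}\circ W^Y_L\circ Z$, which is then seen to be $W$ tautologically. Your approach is shorter and more hands-on; the paper's avoids invoking Laudenbach--Poenaru and stays entirely within the cobordism-category formalism built in Section~\ref{sec4}. One small point worth making explicit in your version: the identification of the $x_i$ with the standard generators of $H_1T^4$ holds because the diffeomorphism $W\cong T^4$ in Theorem~\ref{IHX-T4} is produced by sliding only $2$--handles, hence leaves the $1$--handle cores undisturbed up to sign.
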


\begin{proof}
  Let $Y=V\cup _\partial (-V)\cong \sharp^4(S^2\times S^1)$ be the double of $V$, and
  let $i\co V\hookrightarrow Y$ be the inclusion.  The diffeomorphism
  $h_V\co V_L\ct V$ extends to $h_Y=h\cup \id_{-V}\co Y_L\ct Y$.  Set
  $\rho _Y=\rho _V\cup \rho _{-V}\co Y\rightarrow K(\Z  ^r,1)$, where $\rho _{-V}\co -V\rightarrow K(\Z  ^r,1)$
  is the same as $\rho _V$.

  By using Proposition \ref{r8} twice for inclusions $M\supset V\subset Y$, we have
  \begin{gather*}
    \theta _4(\tr(C_h\circ W^M_L))
    =\theta _4(\tr(C_{h_{V}}\circ W^{V}_L))
    =\theta _4(\tr(C_{h_{Y}}\circ W^{Y}_L))
  \end{gather*}
  in $H_4(\Z  ^r)$.

  In the following, we will show that the closed $4$--manifold
  $\tr(C_{h_{Y}}\circ W^{Y}_L)$ is cobordant to $W\cong T^4$ over
  $K(\Z  ^r,1)$, where $W$ is defined in Section \ref{sec7}.

  Consider the cylinder $C_Y:=Y\times [0,1]$ and define a map
  $\rho _{C_Y}\co C_Y\rightarrow K(\Z  ^r,1)$ as the composite
  \begin{gather*}
    C_Y\xto{\text{proj}} Y \xto{\rho _Y} K(\Z  ^r,1).
  \end{gather*}

  The $3$--manifold $Y$ is naturally identified with the boundary of
  the $4$--dimensional handlebody
  \begin{gather*}
    Z:=B^4\cup (\text{four $1$--handles}).
  \end{gather*}
  We regard $Z$ as a cobordism $Z\co \emptyset\rightarrow Y$ (over $K(\Z  ^r,1)$)
  from the empty $3$--manifold $\emptyset$ to $Y$.  The
  orientation-reversal $-Z$ of $Z$ is regarded as a cobordism
  $-Z\co Y\rightarrow \emptyset$.  Then the cobordism $C_Y\co Y\rightarrow Y$ is cobordant
  over $K(\Z  ^r,1)$ to $Z\circ(-Z)\co Y\rightarrow Y$.

  Then we have
  \begin{gather*}
    \begin{split}
    \theta _4(\tr(C_{h_Y}\circ W^Y_L))
    =&\theta _4(\tr(C_{h_Y}\circ W^Y_L\circ C_Y))\\
    =&\theta _4(\tr(C_{h_Y}\circ W^Y_L\circ Z\circ (-Z)))\\
    =&\theta _4(\tr((-Z)\circ C_{h_Y}\circ W^Y_L\circ Z))\\
    =&\theta _4(W,\rho _W).
    \end{split}
  \end{gather*}
  The last identity follows from natural diffeomorphism of closed
  $4$--manifolds
  \begin{gather*}
    g\co (-Z)\circ C_{h_Y}\circ W^Y_L\circ Z\ct W,
  \end{gather*}
  The map $\rho _W\co W\rightarrow K(\Z  ^r,1)$ is the one which extends
  $\rho _Y\co Y\rightarrow K(\Z  ^r,1)$.

  By Theorem \ref{IHX-T4}, we have a  diffeomorphism
  \begin{gather*}
    g'\co W\ct T^4.
  \end{gather*}
  Define $\rho _{T^4}\co T^4\rightarrow K(\Z  ^r,1)$ as the composite
  \begin{gather*}
    T^4\xtoc{(g')^{-1}_*}W\xto{\rho _W} K(\Z  ^r,1).
  \end{gather*}
  Clearly, we have
  \begin{gather*}
    \theta _4(W,\rho _W)=\theta _4(T^4,\rho _{T^4}).
  \end{gather*}

  Let $j\co Y\hookrightarrow W$ be the inclusion map.
  By construction, we see that $(g'gji)_*(x_i)\in H_1T^4$, $i=1,\ldots ,4$, are a set
  of generators of $H_1T^4\cong\Z  ^4$.
  Hence we have
  \begin{gather*}
    \begin{split}
    \theta _4(T^4,\rho _{T^4})
    &= \pm p((\rho _{T^4}g'gji)_*(x_1)\wedge\dots \wedge(\rho _{T^4}g'gji)_*(x_4))\\
    &= \pm p((\rho _{V})_*(x_1)\wedge\dots \wedge(\rho _{V})_*(x_4))\\
    &= \pm p(y_1\wedge\dots \wedge y_4).
    \end{split}
  \end{gather*}

  The identity \eqref{e27} follows from the above identities.
\end{proof}

\begin{theorem}
  \label{r25}
  Let $M$ be a compact, connected, oriented $3$--manifold with
  non-empty boundary.  Let $N$ be a normal subgroup of $\pi _1M$ such
  that $\pi _1M/N\cong \Z  ^r$ with $r\ge 0$.  (Here $r$ may or may not be
  equal to the rank of $H_1M$.)  Let $L$ and $L'$ be two $N$--links.
  Then the following conditions are equivalent.
  \begin{enumerate}
  \item $\tri{M_L}$ and $\tri{M_{L'}}$ are $\Z  ^r$--diffeomorphic.
  \item $L$ and $L'$ are related by a sequence of IHX-moves and
    $\delta (N)$--equivalence.
  \end{enumerate}
\end{theorem}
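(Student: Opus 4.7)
The plan is to derive Theorem \ref{r25} from Theorem \ref{general} by identifying IHX-moves with the $\alpha$-moves corresponding to a natural generating set of $H_4(\Z^r)$. Since $\pi_1M\onto\Z^r$ is surjective, so is $(\rho_M)_\ast\co\pi_1M\rightarrow\Z^r$, hence $G_M=\Z^r$ and by Proposition \ref{r22} every $\alpha\in H_4(\Z^r)$ admits an $M$-applicable framed link realization. Theorem \ref{general} then reduces condition (i) to equivalence under $\delta(N)$-equivalence and $\alpha_i$-moves, once a generating set $\{\alpha_i\}_{i\in I}$ of $H_4(\Z^r)$ is fixed.

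Via the Pontryagin product isomorphism \eqref{e23}, $H_4(\Z^r)\cong\bigwedge^4\Z^r$ is generated by the basic wedges $\alpha_{i_1i_2i_3i_4}:=p(e_{i_1}\wedge e_{i_2}\wedge e_{i_3}\wedge e_{i_4})$ for $1\le i_1<i_2<i_3<i_4\le r$, where $e_1,\dots,e_r$ is the standard basis of $\Z^r$.  (If $r<4$ then $H_4(\Z^r)=0$, so the theorem reduces to Theorem \ref{r10} and no IHX-moves are needed.)  Given four such indices, I would choose pairwise disjoint embedded simple closed curves $c_1,\dots,c_4\subset M\setminus L$ whose $H_1M$--classes map to $e_{i_1},\dots,e_{i_4}$ under the surjection $H_1M\onto\Z^r$, and extend them to an orientation-preserving embedding $f\co V_4\hookrightarrow M\setminus L$ sending the $j$th meridian $x_j$ of $V_4$ to $c_j$.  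By Proposition \ref{r14}, the framed link realization $((V_4,\rho_Mf),L_{IHX},h_V)$ represents $\pm\alpha_{i_1i_2i_3i_4}$, and the associated $R$--move is precisely the IHX-move $L\leftrightarrow L\cup f(L_{IHX})$.  By Proposition \ref{r19}, any two framed link realizations of the same element yield $\delta(N)$--equivalent results, so, up to $\delta(N)$--equivalence, the basic $\alpha_{i_1i_2i_3i_4}$--moves coincide with this family of IHX-moves.

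Conversely, any IHX-move is, via Proposition \ref{r14}, an $\alpha$--move for some $\alpha\in H_4(\Z^r)$, which I would decompose as a $\Z$--linear combination of basic wedges $\alpha_{i_1i_2i_3i_4}$ and, via Proposition \ref{r19} together with Theorem \ref{general}, reproduce as a sequence of basic $\alpha_{i_1i_2i_3i_4}$--moves up to $\delta(N)$--equivalence.  Combining with the previous paragraph, the equivalence relation generated by IHX-moves and $\delta(N)$--equivalence coincides with that generated by $\alpha_{i_1i_2i_3i_4}$--moves and $\delta(N)$--equivalence, and Theorem \ref{general} then yields the equivalence of (i) and (ii).  The direction (ii)$\Rightarrow$(i) can also be checked directly: stabilizations, handle-slides and $K_3(N)$--moves yield $\Z^r$--diffeomorphisms by Lemma \ref{r15} combined with Proposition \ref{r21}, while an IHX-move produces a $\Z^r$--diffeomorphism via the identity-outside-$f(V_4)$ construction from Section \ref{sec:ihx-moves}.

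I expect the main obstacle to be the converse step in the third paragraph, namely decomposing an arbitrary IHX-move into the basic $\alpha_{i_1i_2i_3i_4}$--moves up to $\delta(N)$--equivalence.  The decomposition at the level of homology classes in $H_4(\Z^r)$ is immediate from the Pontryagin product description, but realising it at the level of $N$--links requires carefully combining Propositions \ref{r14} and \ref{r19}: one must show that performing a sequence of basic $\alpha_{i_1i_2i_3i_4}$--moves whose homological sum equals $\alpha$ produces an $N$--link that is $\delta(N)$--equivalent to the one produced by the original IHX-move.  Everything else in the argument is a formal application of the machinery of Section \ref{sec5}.
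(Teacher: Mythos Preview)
Your proof is correct and follows the same strategy as the paper: invoke Theorem \ref{general} after identifying IHX-moves with the relevant $\alpha$-moves via Proposition \ref{r14}. The paper's argument is shorter because it takes as generating set the full collection $\{p(z_1\wedge\cdots\wedge z_4):z_i\in H_1M\}$ rather than just the basis wedges; with this choice every IHX-move is literally an $\alpha_i$-move for some $i$, and conversely (by Proposition \ref{r19}) every $\alpha_i$-move is $\delta(N)$-equivalent to an IHX-move, so Theorem \ref{general} applies immediately with no decomposition step.

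Your flagged ``main obstacle'' is in fact not an obstacle even with your basis generating set: once you have verified (as in your last paragraph) that IHX-moves preserve $\Z^r$-diffeomorphism, Theorem \ref{general} itself, applied to the basis wedges, already shows that the result of any IHX-move is related to the original link by basic $\alpha_{i_1i_2i_3i_4}$-moves and $\delta(N)$-equivalence. No separate realization argument is needed.
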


\begin{proof}
  The result follows from Theorem \ref{general} and Proposition
  \ref{r14} since the set
  \begin{gather*}
    \{p(z_1\wedge\dots \wedge z_4)\in H_4(\Z  ^r)\;|\;  z_1,\ldots ,z_4\in H_1M\}
  \end{gather*}
  generates the group $H_4(\Z  ^r)$.
\end{proof}

\begin{remark}
  \label{r27}
  If $r\le 3$, then we do not need IHX-moves in Theorem
  \ref{r25} since $H_4(\Z  ^r)=0$.
\end{remark}

\subsection{Proof of Theorem \ref{Qnull-homologous} for $M$ with
  non-empty boundary}
\label{sec:null-homol-fram}

Here we consider a special case of Theorem \ref{r25}, where $N$ is the
kernel of the map $\pi _1M\rightarrow H_1M\rightarrow H_1(M;\Q)$.

In the present situation, a framed link $L$ in $M$ is an $N$--link if
and only if it is a  $\Q$--null-homologous framed link as defined in
Section \ref{sec:introduction}.
An $N$--move is the same as a {\em $\Q$--null-homologous $K_3$--move}.

The following result includes Theorem \ref{Qnull-homologous} for $M$
with non-empty boundary.

\begin{theorem}
  \label{r9}
  Let $M$ be a compact, connected, oriented $3$--manifold with
  non-empty boundary with $\rank H_1M=r\ge 0$, which we regard as a
  $(\partial M,\rho _{\partial M})$--bordered $\Z  ^r$--manifold $\tri{M}$.  Let $L$ and
  $L'$ be $\Q$--null-homologous framed links in $M$.  Then the
  following conditions are equivalent.
  \begin{enumerate}
  \item $\tri{M_L}$ and $\tri{M_{L'}}$ are $\Z  ^r$--diffeomorphic.
  \item $L$ and $L'$ are related by a sequence of stabilizations,
  handle-slides, $\Q$--null-homologous $K_3$--moves and IHX-moves.
  \item There is a diffeomorphism $h\co M_L\ct M_{L'}$ restricting
    to the identification map $\partial M_L\cong \partial M_{L'}$ such that the
    following diagram commutes
    \begin{gather}
      \label{e24}
      \xymatrix{
	H_1(M_L,P_L;\Q)
	\ar[rr]^{h_*}_{\cong}
	\ar[dr]_{g_L}
	&&
	H_1(M_{L'},P_{L'};\Q)
	\ar[dl]^{g_L'}
	\\
	&
	H_1(M,P;\Q).
	&
      }
    \end{gather}
  \end{enumerate}
  See Section \ref{sec:introduction} for the definition of $g_L,g_{L'}$.
\end{theorem}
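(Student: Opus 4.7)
The plan is to deduce (i) $\Leftrightarrow$ (ii) directly from Theorem \ref{r25}, and then to establish (i) $\Leftrightarrow$ (iii) by translating the $\Z^r$--diffeomorphism condition into the homological one via Proposition \ref{r21} and naturality of maps to $K(\Z^r,1)$. For (i) $\Leftrightarrow$ (ii), I will apply Theorem \ref{r25} with $N=\ker(\pi _1M\to H_1(M;\Q))$, so that $\pi _1M/N\cong H_1M/(\text{torsion})\cong\Z^r$ as required. With this choice, a framed link is an $N$--link precisely when it is $\Q$--null-homologous, and a $K_3(N)$--move is precisely a $\Q$--null-homologous $K_3$--move; hence $\delta (N)$--equivalence is generated by stabilizations, handle-slides, and $\Q$--null-homologous $K_3$--moves, so condition (ii) of Theorem \ref{r25} reads exactly as condition (ii) here.

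For (i) $\Leftrightarrow$ (iii), I will first apply Proposition \ref{r21} with $G=\Z^r$ and $\Sigma =\partial M$ to reformulate (i) as the existence of a diffeomorphism $h\co M_L\ct M_{L'}$ (extending the boundary identification) such that the groupoid triangle in \eqref{e22} commutes. Next I will verify the factorization $(\rho _{M_L})_*=(\rho _M)_*\circ g_L$ on $H_1(\cdot,\cdot;\Q)$, which is forced by the fact that $\rho _{W_L}$ simultaneously extends $\rho _{M_L}$ and $\rho _M$ and that the middle isomorphism defining $g_L$ is induced by the paths $p\times [0,1]\subset \partial M\times [0,1]\subset \partial W_L$, whose images in $K(\Z^r,1)$ are null-homotopic rel endpoints. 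Combined with the observation that $(\rho _M)_*\co H_1(M,P;\Q)\to H_1(K(\Z^r,1),\rho _M(P);\Q)$ is an isomorphism (by the defining property of $\rho _M$ on $H_1(M;\Q)\cong \Q^r$, extended to relative homology by a five-lemma argument on the long exact sequences of pairs), commutativity of the $H_1(\cdot,\cdot;\Q)$--level triangle mapping to $K(\Z^r,1)$ is equivalent to commutativity of Diagram \eqref{e24}.

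The remaining bridge is to pass from the groupoid condition \eqref{e22} to the single $H_1(\cdot,\cdot;\Q)$--level condition. Since the target groupoid $\Pi (K(\Z^r,1),\rho _M(P))$ has abelian vertex groups $\Z^r$ and $H_1(K(\Z^r,1),\rho _M(P);\Z)$ is torsion-free, a functor from $\Pi (M_L,P_L)$ with prescribed effect on objects is determined by its induced homomorphism on $H_1(M_L,P_L;\Z)$, which in turn is detected by its $\Q$--rationalization. I expect the main obstacle to lie in this last bookkeeping step, in particular the mild issue that distinct points of $P$ may share images under $\rho _M$; this can be circumvented by replacing $K(\Z^r,1)$ by a homotopy equivalent model in which the points of $\rho _M(P)$ are separated, after which the groupoid-to-homology reduction proceeds by standard obstruction theory for maps into $K(\Z^r,1)$ relative to a subspace.
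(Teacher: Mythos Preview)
Your proposal is correct and follows essentially the same route as the paper: invoke Theorem~\ref{r25} for (i)$\Leftrightarrow$(ii), then use Proposition~\ref{r21} to convert (i) into a fundamental-groupoid triangle and reduce that to the rational relative-homology triangle~\eqref{e24}. The paper compresses this last reduction to the single phrase ``one easily checks that Conditions (iii) and (iii') are equivalent,'' so your explicit bookkeeping---the factorization $(\rho_{M_L})_*=(\rho_M)_*\circ g_L$, the isomorphism $(\rho_M)_*$ on $H_1(\cdot,\cdot;\Q)$, and the observation that functors into the abelian groupoid $\Pi(K(\Z^r,1),\rho_M(P))$ are determined by their effect on torsion-free $H_1(\cdot,\cdot;\Z)$---is simply a spelled-out version of that step.
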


\begin{proof}
  By Theorem \ref{r25}, Conditions (i) and (ii) are equivalent.

  By  Proposition \ref{r21} we see that Condition (i) is equivalent to:
  \begin{itemize}
  \item[(iii')] There is a diffeomorphism $h\co M_L\ct M_{L'}$ restricting
  to $\partial M_L\cong \partial M_{L'}$ such that the following groupoid diagram
  commutes:
    \begin{gather}
      \label{e31}
      \xymatrix{
	\Pi (M_L,P_L)
	\ar[rr]^{h_*}_{\cong}
	\ar[dr]_{q_{L}}
	&&
	\Pi (M_{L'},P_{L'})
	\ar[dl]^{q_{L'}}
	\\
	&
	\Pi (M,P)/N
	&
      }
    \end{gather}
    where $N=\ker(\pi _1M\rightarrow H_1(M;\Q))$, and $q_L$, $q_{L'}$ are as defined
    in \eqref{e8}.
  \end{itemize}
  Then one easily checks that Conditions (iii) and (iii') are equivalent.
\end{proof}

\begin{remark}
  \label{r3}
  By Remark \ref{r27} we do not need the IHX-moves in Theorem \ref{r9}
  when $r=\rank H_1M<4$.  In fact, the case $r<4$ of Theorem \ref{r9}
  with IHX-moves omitted can be proved using \cite[Theorem 2.2]{HW},
  since $H_4(\Z^r)=0$.
\end{remark}

\subsection{Proof of Theorem \ref{Qnull-homologous} for a closed
  $3$--manifold $M$}
\label{sec:proof-theor-refqn}

In the situation of Theorem \ref{Qnull-homologous}, suppose that $M$
is a closed oriented $3$--manifold.  Let
$M_0=M\setminus\operatorname{int}B^3$ be the $3$--manifold obtained
from $M$ by removing the interior of a $3$--ball in $M$.  We have
$\partial M_0=S^2$.

Let $L$ and $L'$ be $\mathbb{Q}$--null-homologous framed links in
$M_0\subset M$.

It is clear that condition (i) in Theorem \ref{Qnull-homologous}
for $L,L'\subset M$ and that for $L,L'\subset M_0$ are equivalent.

It is also easy to see that condition (ii) in Theorem
\ref{Qnull-homologous} for $L,L'\subset M$ and that for $L,L'\subset M_0$ are
equivalent.  Here note that $H_1(M_0,\{p\};\mathbb{Q})$ ($p\in\partial
M_0$) and $H_1(M,\emptyset;\mathbb{Q})$ are naturally isomorphic.

Therefore, Theorem \ref{Qnull-homologous} for $M_0$ implies Theorem
\ref{Qnull-homologous} for $M$.

\section{Admissible framed links in $3$--manifolds with free abelian
  first homology group}
\label{sec9}

As mentioned in the introduction, an {\em admissible} framed link $L$
in a $3$--manifold $M$ is a $\Z$--null-homologous framed link such that
the linking matrix of $L$ is diagonal of diagonal entries $\pm1$.
Surgery along an admissible framed link is called {\em admissible
surgery}.  As observed by Cochran, Gerges and Orr \cite{CGO},
admissible surgery preserves the first homology group, and moreover
the torsion linking pairing and the cohomology rings with arbitrary
coefficients.  Using these algebraic invariants, they gave a
characterization of the equivalence relation on closed oriented
$3$--manifolds generated by admissible surgeries.

In this section, we prove Theorem \ref{admissible}, which may be
regarded as Kirby type calculus for admissible framed links.  In order
to prove Theorem \ref{admissible}, we apply the results for admissible
framed links in $S^3$ developed in \cite{H:kirby} to admissible framed
links in more general $3$--manifolds.

\subsection{Admissible IHX-moves}
\label{sec:admissible-ihx-moves-1}
The definition of an {\em admissible IHX-move} on a framed link is the
same as that of an IHX-move except that we use $L^\adm_{IHX}$ instead
of $L_{IHX}$.  If an admissible IHX-move is applied to an admissible
framed link, then the result is again admissible.

The following lemma immediately follows from Lemma \ref{r2}.

\begin{proposition}
  \label{r1}
  An IHX-move and an admissible IHX-move are equivalent under
  $\delta$--equivalence.  More precisely, an IHX-move can be realized
  by a sequence of an admissible IHX-move and finitely many
  stabilizations and handle-slides, and conversely an admissible
  IHX-move can be realized by a sequence of an IHX-move and finitely
  many stabilizations and handle-slides.
\end{proposition}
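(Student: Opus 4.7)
The plan is to reduce Proposition \ref{r1} directly to Lemma \ref{r2}, which already supplies the required $\delta$-equivalence between $L_T$ and $L_T^{\adm}$ for a single $Y_2$-clasper $T$. The key structural observation is that the IHX-clasper decomposes as a disjoint union $T_{IHX}=T_1\cup T_2\cup T_3$ of three $Y_2$-claspers in $V_4$, so one can choose pairwise disjoint regular neighborhoods $N(T_1)$, $N(T_2)$, $N(T_3)$, and by construction $L_{IHX}$ (resp.\ $L_{IHX}^{\adm}$) is the disjoint union of the $L_{T_i}$ (resp.\ $L_{T_i}^{\adm}$), with both $L_{T_i}$ and $L_{T_i}^{\adm}$ contained in $N(T_i)$.

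First I would make explicit the locality of Lemma \ref{r2}: the sequence of moves exhibited in its proof (one stabilization followed by a few handle-slides to go from $L_T$ to the intermediate link $L_T'$, and then further handle-slides to arrive at $L_T^{\adm}$) is carried out entirely inside $N(T)$, and only the components of $L_T$ together with the newly introduced $\pm 1$--framed unknot are involved. Given an IHX-move applied to a framed link $L$ in $M$ via an orientation-preserving embedding $f\co V_4\hookrightarrow M\setminus L$, the corresponding moves inside $f(N(T_i))$ can be performed without affecting $L$ or the components $f(L_{T_j})$ for $j\neq i$. Performing these locally supported stabilizations and handle-slides for $i=1,2,3$ in turn converts $L\cup f(L_{IHX})$ into $L\cup f(L_{IHX}^{\adm})$, which proves one of the two claimed implications.

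For the other implication, I would simply run the same sequence backwards: each stabilization is undone by a stabilization of opposite sign (absorbing the handle-slide bookkeeping into additional handle-slides), and each handle-slide is its own inverse up to handle-slides, so the same locality argument produces a sequence of stabilizations and handle-slides converting $L\cup f(L_{IHX}^{\adm})$ back to $L\cup f(L_{IHX})$. In particular, an IHX-move followed by those inverse moves realizes an admissible IHX-move, and vice versa.

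The only nontrivial point -- and hence the main (though mild) obstacle -- is the verification that the moves of Lemma \ref{r2} really are supported in a small regular neighborhood of a single $Y_2$-clasper, so that the three applications for $T_1$, $T_2$, $T_3$ do not interfere with each other or with the ambient link $L$. This is essentially read off from the pictures in Figure \ref{Y2-assoc} and Figure \ref{Y2-assoc-2}; once it is noted, the proposition follows with no further ingredients.
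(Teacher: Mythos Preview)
Your proposal is correct and follows exactly the approach the paper takes: the paper simply states that the proposition ``immediately follows from Lemma~\ref{r2},'' and your argument is just a careful unpacking of why, using the disjointness of the three $Y_2$--claspers in $T_{IHX}$ and the fact that the moves in Lemma~\ref{r2} are supported inside $N(T)$. Your elaboration on locality and on running the sequence backwards is accurate and adds no new ingredients beyond what the paper implicitly invokes.
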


\subsection{Reduction of Theorem \ref{admissible}}

By Theorem \ref{Znull-homologous}, we see that Theorem
\ref{admissible} follows from the following result.

\begin{proposition}
  \label{r5}
  Let $L$ and $L'$ be two admissible framed links in a compact,
  connected, oriented $3$--manifold $M$.
  Then the following conditions are equivalent.
  \begin{enumerate}
  \item $L$ and $L'$ are related by a sequence of stabilizations,
    band-slides, pair-moves, admissible IHX-moves, and lantern-moves.
  \item $L$ and $L'$ are related by a sequence of stabilizations,
    handle-slides, $\Z  $--null-homologous $K_3$--moves and IHX-moves.
  \end{enumerate}
\end{proposition}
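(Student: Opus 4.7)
The direction (i)~$\Rightarrow$~(ii) is a direct check that each generator of (i) is a sequence of (ii)-moves. Stabilizations appear in both lists. A band-slide is two mutually cancelling handle-slides. A pair-move becomes a $\Z$-null-homologous $K_3$-move after one handle-slide of $K^+$ over $K^-$, as already sketched in the introduction. An admissible IHX-move is equivalent to an IHX-move up to stabilizations and handle-slides by Proposition \ref{r1}. Finally, a lantern-move $L\cup f(K)\leftrightarrow L\cup f(K')$ is realized by stabilizations and handle-slides inside $f(V_3)$, since $K$ and $K'$ are $\delta$-equivalent in $V_3$.

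For the converse (ii)~$\Rightarrow$~(i), the plan is to invoke Theorem \ref{Znull-homologous} to recast the (ii)-relation homologically: for admissible $L$ and $L'$, being (ii)-related is equivalent to the existence of a diffeomorphism $h\co M_L\ct M_{L'}$ making Diagram \eqref{e30} commute. Since each move in (i) clearly preserves this condition, it suffices to show the reverse: any admissible pair $L,L'$ satisfying this diffeomorphism condition is (i)-equivalent. We obtain this by adapting the proof of Theorem \ref{r9} (equivalently, Theorem \ref{Qnull-homologous} for $M$ with boundary) to the admissible setting, substituting admissible counterparts for each generating move used there.

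The adaptation proceeds as follows. A handle-slide between admissible configurations is localized inside a $3$-ball $B\subset M$ containing the two components and the band involved, and is then replaced by a sequence of stabilizations and band-slides using Habiro's theorem \cite{H:kirby} (Theorem \ref{r45}). IHX-moves are replaced by admissible IHX-moves via Proposition \ref{r1}. The delicate step is to replace $\Z$-null-homologous $K_3$-moves by pair-moves, lantern-moves, and band-slides: a pair-move only inserts an admissible parallel pair of framings $\pm 1$, while a $K_3$-move can insert a component of arbitrary null-homologous homotopy class with arbitrary framing. The lantern-move supplies the additional relations needed to navigate higher-genus subhandlebodies, while band-slides together with admissible IHX-moves adjust framings and transport the inserted pair into the desired position. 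Verifying that this admissible toolkit suffices to simulate every $\Z$-null-homologous $K_3$-move (up to the other (i)-moves) is the main obstacle of the proof, and is the place where the results of \cite{H:kirby} must be substantially extended from $S^3$ to arbitrary $3$-manifolds with free abelian~$H_1$.
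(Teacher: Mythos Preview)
Your treatment of (i)~$\Rightarrow$~(ii) is correct and matches the paper (with the minor difference that the paper explicitly verifies the lantern-move via $\Z$--null-homologous $K_3$--moves rather than citing $\delta$--equivalence, but either works).

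For (ii)~$\Rightarrow$~(i), however, there is a genuine gap in your proposed localization argument. You claim that a handle-slide between admissible configurations can be ``localized inside a $3$--ball $B\subset M$ containing the two components and the band'', whereupon Theorem~\ref{r45} applies. This fails on two counts. First, components of an admissible link in $M$ are $\Z$--null-homologous but typically not null-homotopic (their homotopy classes lie in $[\pi_1M,\pi_1M]$), so they need not fit in any $3$--ball. Second, and more fundamentally, a single handle-slide does not take an admissible link to an admissible link; the intermediate links in a handle-slide sequence from $L$ to $L'$ have arbitrary linking matrices. So there is no ``handle-slide between admissible configurations'' to localize, and you cannot apply Theorem~\ref{r45} step by step. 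You also acknowledge that simulating $K_3$--moves is left as ``the main obstacle'', so the argument as written is only a plan.

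The paper's actual proof is structurally quite different from your outline. Rather than passing through the diffeomorphism condition of Theorem~\ref{Znull-homologous}, it works directly on the given (ii)-sequence and eliminates the move types one at a time. IHX-moves are absorbed first: one appends enough admissible IHX-moves to kill the obstruction $\eta(S)\in H_4(H_1M)$, after which Theorem~\ref{t2} gives a new sequence with no IHX-moves. Next, $\Z$--null-homologous $K_3$--moves are eliminated by first applying pair-moves to insert an auxiliary admissible link whose components normally generate $[\pi_1M,\pi_1M]$; handle-slides over these auxiliary components make any $K_3$--component null-homotopic, and then crossing changes via its $0$--framed meridian unknot it. Stabilizations are then pushed to the ends. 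The heart of the argument is the remaining handle-slide sequence between two admissible links of type $(p,q)$: here one uses the functor $\varphi\co\modS_{M,n}\to\GL(n;\Z)$ and Wall's generators of $\OO(p,q;\Z)$ (Lemma~\ref{hk-lem43}) to factor the sequence, up to band-slides (Theorem~\ref{hk-thm21}), into $\modP$--, $\modQ$--, and specific $\modD^{\pm1}$--moves. Finally, each $\modD^{\pm1}$--move is shown (Lemma~\ref{r4}) to be a composite of two lantern-moves and pair-moves. The key idea you are missing is precisely this algebraic control of handle-slide sequences via $\OO(p,q;\Z)$; it is what replaces the impossible $3$--ball localization.
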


\begin{proof}[Proof of Proposition \ref{r5}, (i) implies (ii)]
  We have seen that stabilizations, band-slides, pair-moves,
  admissible IHX-moves are realized by a sequence of stabilizations,
  handle-slides, $\Z  $--null-homologous $K_3$--moves and IHX-moves.

  We will show that a lantern-move is realized by a sequence of
  stabilizations, handle-slides and $\Z $--null-homologous
  $K_3$--moves.  Let $K$ and $K'$ be the two framed links in $V_3$
  depicted in Figure \ref{lantern}(a) and (b).
  Figure~\ref{lantern-realization-1} shows a sequence of
  stabilizations, handle-slides and $\Z $--null-homologous
  $K_3$--moves from $K$ to a framed link $\tilde {K}$.  Similarly,
  Figure~\ref{lantern-realization-2} shows a sequence of
  stabilizations, handle-slides and $\Z $--null-homologous
  $K_3$--moves from $K'$ to a framed link $\tilde {K'}$.  The links
  $\tilde{K}$ and $\tilde{K'}$ are isotopic.  Thus, there exists a
  sequence of stabilizations, handle-slides and $\Z $--null-homologous
  $K_3$--moves from $K$ to $K'$.
  \begin{figure}[htp]
    \begin{center}
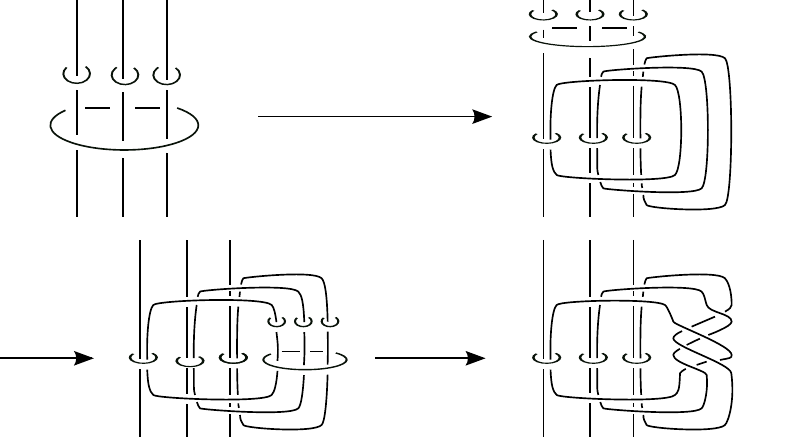
          \end{center}
          \caption{From $K$ to $\tilde{K}$.}
    \label{lantern-realization-1}
\end{figure}
        \begin{figure}[htp]
    \begin{center}
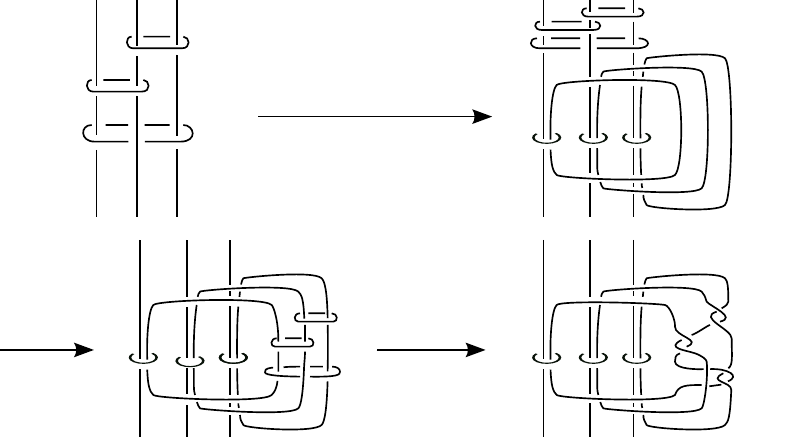
          \end{center}
          \caption{From $K'$ to $\tilde{K'}$.}
    \label{lantern-realization-2}
\end{figure}
\end{proof}

In the rest of this section, we prove that (ii) implies (i).

\subsection{The category $\modS _{M,n}$}
\label{sec:orient-order-admiss}

In the proof that (ii) implies (i) in Proposition \ref{r5}, we use
oriented, ordered framed links.  We briefly recall some definitions
and results from \cite{H:kirby}.

An {\em oriented, ordered framed link} in a $3$--manifold $M$ is a framed
link $L=L_1\cup \dots \cup L_n$ in $M$ such that each component $L_i$ of $L$ is
given an orientation, and the set of components of $L$ is given a
total ordering.  Two oriented, ordered framed links are considered
equivalent if there is an ambient isotopy between them preserving the
orientations and the orderings.

Following \cite{H:kirby}, let $\modL _{M,n}$, $n\ge 0$, denote the set of
equivalence classes of oriented, ordered $n$--component framed links in $M$.
Let $\modE =\modE _n$ denote the set of symbols
\begin{itemize}
\item $\modP _{i,j}$ for $i,j\in \{1,\ldots ,n\}$, $i\neq j$,
\item $\modQ _i$ for $i\in \{1,\ldots ,n\}$,
\item $\modW _{i,j}^\epsilon $ for $i,j\in \{1,\ldots ,n\}$, $i\neq j$, $\epsilon =\pm 1$.
\end{itemize}
For $e\in \modE $, define an $e$--move on $L=L_1\cup \dots \cup L_n\in \modL _{M,n}$ as follows.
\begin{itemize}
\item A {\em $\modP _{i,j}$--move} on $L$ exchanges the order of $L_i$ and
  $L_j$.
\item A {\em $\modQ _i$--move} on $L$ reverses the orientation of $L_i$.
\item A {\em $\modW _{i,j}^\epsilon $--move} on $L$ is a handle-slide of $L_i$
  over $L_j$, see Figure \ref{w-ij}.
  \FI{w-ij}{(a) A $\modW _{i,j}^{+1}$--move.  (b) A $\modW _{i,j}^{-1}$--move}
\end{itemize}
For $L,L'\in \modL _{M,n}$ and $e\in \modL _{M,n}$, we mean by $L\xto{e}L'$ that $L'$ is
obtained from $L$ by an $e$--move.  These moves are called the {\em
  elementary moves}.

Let $\modS _{M,n}$ denote the category such that the objects are the
elements of $\modL _{M,n}$ and the morphisms from $L\in \modL _{M,n}$ to $L'\in \modL _{M,n}$ are the
sequences of elementary moves
\begin{gather*}
  S\co L=L_0\xto{e_1}L_1\xto{e_2}\cdots \xto{e_p}L_p,
\end{gather*}
$p\ge 0$.  The composition of two sequences in $\modS _{M,n}$ is given by
concatenation of sequences, and the identity $1_L$ of $L\in \modL _{M,n}$ is the
sequence of length $0$.

\subsection{The functor $\varphi\co \modS _{M,n}\rightarrow \GL(n;\Z  )$}

There is a functor
\begin{gather*}
  \varphi\co \modS _{M,n}\rightarrow \GL(n;\Z  )
\end{gather*}
from $\modS _{M,n}$ to $\GL(n;\Z  )$, where the group $\GL(n;\Z  )$ of invertible
$n\times n$ matrices with entries in $\Z  $ is regarded as a groupoid with
one object $*$, such that
\begin{align*}
  \varphi(L\xto{\modP _{i,j}}L')&=P_{i,j}:=I_n -E_{i,i}-E_{j,j}+E_{i,j}+E_{j,i},\\
  \varphi(L\xto{\modQ _i}L')&=Q_{i}:=I_n -2E_{i,i},\\
  \varphi(L\xto{\modW _{i,j}^\epsilon }L')&=W_{i,j}^\epsilon :=I_n +E_{i,j},
\end{align*}
where $E_{i,j}=(\delta _{k,i}\delta _{l,j})_{k,l}$.

\begin{lemma}[{\cite[Lemma 2.2]{H:kirby}}]
  \label{hk-lem22}
  If $L,L'\in \modL _{M,n}$ are $\Z  $--null-homologous framed links and if
  $S\co L\rightarrow L'$ is a morphism in $\modS _{M,n}$, then we have the following identity for the linking
  matrices
  \begin{gather*}
    \Lk(L') = \varphi(S)(\Lk(L'))\varphi(S)^t,
  \end{gather*}
  where $(-)^t$ denotes transpose.
\end{lemma}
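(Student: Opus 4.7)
The plan is to reduce, by functoriality of $\varphi$, to checking the identity on the three types of elementary moves. If $S$ factors as $L = L_0 \xto{e_1} L_1 \xto{e_2} \cdots \xto{e_p} L_p = L'$, then $\varphi(S) = \varphi(e_p)\cdots\varphi(e_1)$, so by induction on $p$ it suffices to verify
\begin{gather*}
  \Lk(L_{k+1}) = \varphi(e_{k+1})\,\Lk(L_k)\,\varphi(e_{k+1})^t
\end{gather*}
for every single elementary move; the full identity then telescopes (using $(AB)^t = B^t A^t$), and the base case $p=0$ is trivial.

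For a $\modP _{i,j}$-move, interchanging the labels of $L_i$ and $L_j$ simultaneously swaps rows $i,j$ and columns $i,j$ of the linking matrix, which is exactly conjugation by the transposition matrix $P_{i,j}$. For a $\modQ _i$-move, reversing the orientation of $L_i$ leaves the framing $l_{ii}$ unchanged but flips the sign of every $l_{ik}$ and $l_{ki}$ with $k \neq i$; this is precisely the effect of conjugation by the involution $Q_i = I_n - 2E_{i,i}$.

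The substantive case is the handle-slide $\modW _{i,j}^\epsilon$. Here $L_i$ is replaced by a band-sum $L_i'$ with a pushoff of $L_j$ taken in its preferred framing (with a sign $\epsilon$). Because $L$ is $\Z$-null-homologous, the linking pairing is well-defined and bilinear on the homology classes of the components in the ambient $3$--manifold, and one has $[L_i'] = [L_i] + \epsilon [L_j]$. Bilinearity then immediately yields $l'_{ik} = l_{ik} + \epsilon l_{jk}$ for $k \neq i$, which matches the off-diagonal entries of $W_{i,j}^\epsilon\,\Lk(L)\,(W_{i,j}^\epsilon)^t$.

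The one point requiring extra care, and hence the main obstacle, is the $(i,i)$-entry: the self-linking of the band-sum is
\begin{gather*}
  l'_{ii} = l_{ii} + l_{jj} + 2\epsilon l_{ij},
\end{gather*}
which follows from the standard computation that the pushoff of $L_j$ used in the band contributes its own framing $l_{jj}$ and crosses $L_i$ with algebraic count $\epsilon l_{ij}$ on each side of the band. This is exactly the $(i,i)$-entry of the conjugate. Combined with the off-diagonal match, the identity $\Lk(L_{k+1}) = \varphi(e_{k+1})\Lk(L_k)\varphi(e_{k+1})^t$ holds for every elementary move, and the functorial telescoping argument completes the proof.
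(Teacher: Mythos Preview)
The paper does not give a proof of this lemma; it is quoted verbatim from \cite[Lemma~2.2]{H:kirby} and used as a black box. Your argument is the standard one and is correct: reduce by functoriality of $\varphi$ to a single elementary move, then verify the congruence $\Lk(L')=\varphi(e)\Lk(L)\varphi(e)^t$ directly for each of $\modP_{i,j}$, $\modQ_i$, $\modW_{i,j}^\epsilon$. The permutation and orientation-reversal cases are immediate, and your computation of the framing $l'_{ii}=l_{ii}+l_{jj}+2\epsilon l_{ij}$ after a handle-slide, together with the off-diagonal bilinearity, matches the $(i,i)$-entry of $W_{i,j}^\epsilon\Lk(L)(W_{i,j}^\epsilon)^t$ exactly. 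This is essentially how the original reference proves it as well, so there is nothing to add.

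One small remark: the displayed identity in the statement contains a typo (the right-hand side should read $\varphi(S)\Lk(L)\varphi(S)^t$, not $\Lk(L')$), and you have silently corrected this in your proof, which is the right thing to do.
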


\begin{theorem}[{\cite[Theorem 2.1]{H:kirby}}]
  \label{hk-thm21}
  If a morphism $S\co L\rightarrow L'$ in $\modS _{M,n}$ satisfies $\varphi(S)=I_n$, then
  $L$ and $L'$ are related by a sequence of band-slides.
\end{theorem}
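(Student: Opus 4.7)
The plan is to convert any morphism $S\co L\to L'$ in $\modS_{M,n}$ with $\varphi(S)=I_n$ into a composition of band-slides with the same endpoints. The strategy uses a presentation of $\GL(n;\Z)$ by the images of the generators $\modP_{i,j}$, $\modQ_i$, and $\modW^\epsilon_{i,j}$, and verifies that each relator of this presentation is realized at the link level either as an isotopy or as a band-slide.

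First, I would prove commutation lemmas showing that each $\modP$-move or $\modQ$-move can be pushed past an adjacent $\modW$-move, at the cost of relabeling indices (for $\modP$) or flipping signs (for $\modQ$); these identities hold literally at the link level, since the corresponding moves can be performed in either order up to isotopy. After pushing all $\modP$-moves and $\modQ$-moves to one end, the tail becomes a sequence of signed-permutation moves. Since the matrix of the remaining $\modW$-moves is unipotent while signed permutations are not, the hypothesis $\varphi(S)=I_n$ forces the signed-permutation tail to evaluate to $I_n$ in $\GL(n;\Z)$, which in turn forces it to act trivially on the ordering and orientation of components. The corresponding tail therefore returns the link to itself as an element of $\modL_{M,n}$ and can be deleted. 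This reduces matters to the case where $S$ consists entirely of $\modW$-moves.

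Next, with $S=\modW^{\epsilon_1}_{i_1,j_1}\cdots \modW^{\epsilon_m}_{i_m,j_m}$ and $\prod_k W^{\epsilon_k}_{i_k,j_k}=I_n$ in $\mathrm{SL}(n;\Z)$, I would invoke a finite presentation of $\mathrm{SL}(n;\Z)$ by the elementary matrices, with relations (a) $W^+_{i,j}W^-_{i,j}=I_n$, (b) $[W^\epsilon_{i,j},W^\delta_{k,l}]=I_n$ for $\{i,j\}\cap\{k,l\}=\emptyset$, and (c) the Steinberg triangular relation $[W^\epsilon_{i,j},W^\delta_{j,k}]=W^{\epsilon\delta}_{i,k}$ for distinct $i,j,k$; the small cases $n\le 2$ would be handled separately. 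Writing $S$ as a product of conjugates of these relators reduces the problem to realizing each relator as a band-slide sequence.

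The main obstacle is relation (c). Relators of type (a) are band-slides by definition, and type (b) relators are trivial at the link level since disjoint handle-slides commute by ambient isotopy. For type (c), one must check diagrammatically that the composite of the $\modW$-moves in $[W^\epsilon_{i,j},W^\delta_{j,k}]\cdot (W^{\epsilon\delta}_{i,k})^{-1}$ induces a net change to the link that decomposes explicitly into algebraically cancelling pairs of handle-slides. Producing this band-slide decomposition of the Steinberg relator is the core topological content of the theorem, and the step I expect to require the most delicate case analysis.
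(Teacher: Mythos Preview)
The paper does not prove this theorem; it is quoted from \cite[Theorem~2.1]{H:kirby} and used as a black box. So there is no in-paper argument to compare against, and your proposal must be judged against the original proof in \cite{H:kirby}. Your overall strategy---reduce to a presentation of $\GL(n;\Z)$ and realize each relator by band-slides---is indeed the shape of Habiro's argument, but as written it has two genuine gaps.

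First, the reduction step is incorrect. After commuting the $\modP$-- and $\modQ$--moves to one end you obtain a factorization whose matrix reads $(\text{product of }W^{\epsilon}_{i,j})\cdot(\text{signed permutation})=I_n$. You then claim the $\modW$-part is \emph{unipotent}, forcing the signed-permutation tail to be $I_n$. But an arbitrary product of elementary matrices $W^{\epsilon}_{i,j}=I_n+\epsilon E_{i,j}$ lies in $\mathrm{SL}(n;\Z)$ and need not be unipotent; for example $W^{+1}_{1,2}W^{-1}_{2,1}W^{+1}_{1,2}$ is itself a signed permutation matrix. Hence the tail may be a nontrivial even signed permutation, and you cannot delete it. You must either keep the $\modP$, $\modQ$ generators in the presentation or absorb the tail into a further $\modW$-word and treat the resulting (non-unipotent) product.

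Second, and more seriously, even granting a reduction to a word in the $\modW^{\epsilon}_{i,j}$ with trivial matrix, the relations (a), (b), (c) you list present the Steinberg group $\mathrm{St}(n;\Z)$, not $\mathrm{SL}(n;\Z)$. For $n\ge3$ the kernel of $\mathrm{St}(n;\Z)\to\mathrm{SL}(n;\Z)$ is $K_2(\Z)\cong\Z/2$, so there exist words in the elementary matrices that equal $I_n$ in $\mathrm{SL}(n;\Z)$ but are \emph{not} products of conjugates of your relators. To close the argument you must add an explicit relator representing the generator of $K_2(\Z)$ (e.g.\ the Steinberg symbol $\{-1,-1\}$ written out in elementary matrices) and then verify, at the level of framed links in $M$, that the corresponding sequence of handle-slides is realized by band-slides. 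Handling this extra relation, together with a correct treatment of the signed permutations, is precisely the technical core of the proof in \cite{H:kirby}; your outline has identified the right framework but stopped short of the two places where the actual work lies.
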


Note that a band-slide of an oriented, ordered framed link may be
regarded as a morphism in $\modS _{M,n}$ of the form
$L\xto{\modW _{i,j}^{+1}\modW _{i,j}^{-1}}L'$.

\subsection{Reverse sequences}
If
\begin{gather*}
  S\co L=L^0\xto{e_1}L^1\xto{e_2}\cdots \xto{e_k}L^k=L'
\end{gather*}
is a sequence in $\modS _{M,n}$, then there is the {\em reverse sequence}
\begin{gather*}
  \bar{S}\co L'=L^k\xto{\bar{e}_k}\cdots \xto{\bar{e}_2}L^1\xto{\bar{e}_1}L^1=L,
\end{gather*}
where, for $e\in \modE $, we define $\bar{e}\in \modE $ by
\begin{gather*}
  \bar{\modP }_{i,j}=\modP _{i,j},\quad \bar{\modQ }_i=\modQ _i,\quad
  (\bar{\modW }_{i,j}^{\epsilon })=\modW _{i,j}^{-\epsilon }.
\end{gather*}
We have
\begin{gather*}
  \varphi(\bar{S})=\varphi(S)^{-1}.
\end{gather*}

\subsection{Admissible framed links}
\label{sec:admiss-fram-links-1}

An {\em oriented, ordered admissible framed link} in $M$ of type
$(p,q)$, $p,q\ge 0$, is an oriented, ordered, $\Z  $--null-homologous
framed link
\begin{gather*}
  L=L_1\cup \dots \cup L_p\cup L_{p+1}\cup \dots \cup L_{p+q}\subset M
\end{gather*}
such that the linking matrix $\Lk(L)$ of $L$ satisfies
\begin{gather*}
  \Lk(L)=I_{p,q}:=I_p\oplus(-I_q),
\end{gather*}
where $I_p$ denotes the identity matrix of size $p$, and $\oplus$
denotes block sum.

For $p,q\ge 0$, let $\Lapq$ denote the subset of $\modL _{M,p+q}$ consisting
of the equivalence classes of oriented, ordered admissible framed
links in $M$ of type $(p,q)$.  Let $\modS ^{\adm}_{M;p,q}$ denote the full subcategory
of $\modS _{M,p+q}$ such that $\Ob(\modS ^{\adm}_{M;p,q})=\Lapq$.

Let $L,L'\in \Lapq$, and suppose that there is a morphism $S\co L\rightarrow L'$ in
$\modS _{M,p+q}$, i.e., a sequence of elementary moves from $L$ to $L'$.
By Lemma \ref{hk-lem22}, it follows that
\begin{gather}
  I_{p,q}= \varphi(S)I_{p,q}\varphi(S)^t,
\end{gather}
hence
\begin{gather*}
  \varphi(S)\in \Opq :=\{T\in \GL(p+q;\Z  )\;|\; TI_{p,q}T^t=I_{p,q}\}.
\end{gather*}

We use the following result.

\begin{lemma}[{Wall \cite[1.8]{Wall}}]
  \label{hk-lem43}
  If $p,q\ge 2$, then $\Opq$ is generated by the matrices
  \begin{gather}
    \label{Opq-gen}
    \begin{split}
      &P_{i,j}\quad \text{for $i,j\in \{1,\ldots ,p\}$, $i\neq j$},\\
      &P_{i,j}\quad \text{for $i,j\in \{p+1,\ldots ,p+q\}$, $i\neq j$},\\
      &Q_{i}\quad \text{for $i\in \{1,\ldots ,p+q\}$},\\
      &D_{p,q}
      =\left( \begin{matrix} 1&1&0&-1& 0&0\\-1&1&0&0&1&0\\
	0&0&I_{p-2}&0&0&0\\-1&0&0&1&1&0\\0&1&0&-1&1&0\\0&0&0&0&0&I_{q-2}
      \end{matrix}\right).
    \end{split}
  \end{gather}
\end{lemma}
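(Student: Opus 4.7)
The plan is to follow the standard strategy for integral orthogonal groups of indefinite unimodular forms: let $H \subset \Opq$ denote the subgroup generated by the matrices listed in~\eqref{Opq-gen}, and show by induction on $p+q$ (with $p,q \ge 2$) that $H = \Opq$. The base case is $p = q = 2$, where one can verify the statement by exploiting the identification of the form $I_{2,2}$ with the determinant form on $2\times 2$ integer matrices; this identifies $\OO(2,2;\Z)$ with an explicit extension of $\GL(2;\Z) \times \GL(2;\Z)$, and the four listed families of generators can be matched against a known presentation by hand.

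For the inductive step, the crucial claim is that $H$ acts transitively on the integer unit quadric
\begin{gather*}
  S^+ := \{v \in \Z^{p+q} \mid \langle v, v\rangle_{p,q} = +1\},
\end{gather*}
and similarly on $S^-$. Granting this, for $T \in \Opq$ I would first choose $h_1 \in H$ with $h_1 T e_1 = e_1$, so that $h_1 T$ lies in the stabilizer of $e_1$, which is naturally isomorphic to $\OO(p-1,q;\Z)$ acting on $e_1^\perp$. A second application with $e_{p+1}$ reduces the problem to an element of $\OO(p-1,q-1;\Z)$ acting on $\{e_1, e_{p+1}\}^\perp$, at which point the inductive hypothesis applies (provided I check that the block structure of $D_{p,q}$ produces, after conjugation by permutations, the corresponding $D_{p-1,q-1}$ on the orthogonal complement).

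The main obstacle is establishing transitivity of $H$ on $S^+$. The signed permutation subgroup generated by the $P_{i,j}$ and $Q_i$ preserves the standard orbit $\{\pm e_1, \dots, \pm e_p\}$, so the role of $D_{p,q}$ is precisely to move between ``coordinate'' and ``non-coordinate'' unit vectors: a direct calculation gives $D_{p,q}(e_1) = e_1 - e_2 - e_{p+1} \in S^+$, a genuinely new unit vector. To obtain transitivity on all of $S^+$, I would run a descent argument on the $\ell^1$-norm $\|v\|_1 = \sum_i |v_i|$: given $v \in S^+$ not of the form $\pm e_i$, I seek an element of $H$ (built from a conjugate of $D_{p,q}$ by signed permutations) that strictly decreases $\|v\|_1$. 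The technical heart of the proof is the combinatorial lemma that such a reducing element always exists when $p,q \ge 2$; this is the integer analogue of the classical fact that the affine quadric $\sum x_i^2 - \sum y_j^2 = 1$ has a single integral orbit under $\Opq$ in the indefinite case of rank $\ge 3$, and it is here that the explicit choice of $D_{p,q}$ and the hypothesis $p,q\ge 2$ are used.
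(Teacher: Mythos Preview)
The paper does not contain a proof of this lemma: it is simply quoted from Wall~\cite[1.8]{Wall} and used as a black box in the proof of Proposition~\ref{rr11}. So there is nothing in the paper to compare your proposal against.

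That said, your outline is a reasonable sketch of how one might reprove Wall's result, and it is in the spirit of the arguments in~\cite{Wall}. A few points to be aware of if you want to turn it into an actual proof. First, the inductive step as you describe it drops from $(p,q)$ to $(p-1,q-1)$, so you must check not only the base case $p=q=2$ but also that the generators of $\OO(p-1,q-1;\Z)$, embedded in the block stabilizing $e_1$ and $e_{p+1}$, actually lie in your subgroup $H$; in particular you need $D_{p-1,q-1}$ (acting on the complementary coordinates) to be expressible in terms of the listed generators of $\Opq$, which requires a short computation with conjugates of $D_{p,q}$ by signed permutations. Second, the descent on $\|v\|_1$ is exactly where the work is, and you have not done it: you need to show that for any $v\in S^+$ with $\|v\|_1>1$ there is a signed-permutation conjugate of $D_{p,q}^{\pm1}$ whose application strictly reduces $\|v\|_1$. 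This is delicate over $\Z$ (it fails for definite forms), and the hypothesis $p,q\ge2$ enters precisely here. Without this step the argument is only a plan, not a proof.
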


\subsection{$\modD ^{\pm 1}$--moves}
We consider a sequence of elementary moves on oriented, ordered
admissible framed links whose associated matrix is $D_{p,q}$.  The
matrix
\begin{gather*}
  D_{2,2}=
  \left( \begin{matrix} -1&1&-1&0\\-1&1&0&1\\-1&0&1&1\\0&1&-1&1\end{matrix}\right)
  \in  \mathrm{O}(2,2;\Z  )
\end{gather*}
 is a product of the $W_{i,j}^{\pm 1}$ matrices as
\begin{gather*}
  D_{2,2} = W_{2,1}^{-1}W_{3,1}^{-1}W_{2,4}W_{3,4}
  W_{4,3}^{-1}W_{1,3}^{-1}W_{4,2}W_{1,2}.
\end{gather*}

Consider the $4$--component framed links $l,l',\tl\in \modL _{V_4,4}$ in the
handlebody $V_4$ of genus $4$ depicted in Figure \ref{use-hs}.
\FI{use-hs}{} The handlebody $V_4$ is realized as the complement of
the trivial $4$--component string link in the cylinder $D^2\times [0,1]$.
By applying $\modW _{1,2}$, $\modW _{4,2}$, $\modW ^{-1}_{1,3}$, $\modW ^{-1}_{4,3}$
moves to $l$, we obtain $\tl$.  Similarly, by applying $\modW _{2,1}$,
$\modW _{3,1}$, $\modW ^{-1}_{2,4}$, $\modW ^{-1}_{3,4}$ moves to $l'$, we obtain
$\tl$.  Thus we have a sequence $\modD _{2,2}$ from $l$ to $l'$ such that
$\varphi(\modD _{2,2})=D_{2,2}$.

Let $L\in \Lapq$ with $p,q\ge 2$.  Then we can find an
orientation-preserving embedding
\begin{gather*}
  f\co V_4\hookrightarrow M
\end{gather*}
such that $f(l)=L_1\cup L_2\cup L_{p+1}\cup L_{p+2}$ as follows.

By adding three edges $c_{1,j}$, $j=1,2,3$, to $l$ in an appropriate
way, we obtain a $1$--subcomplex $X=l\cup c_{1,2}\cup c_{1,3}\cup c_{1,4}$ of
$V_4$, which is a strong deformation retract of $V_4$.  Take an
embedding $f_X\co X\hookrightarrow M$ such that
$f_X(l)=L_1\cup L_2\cup L_{p+1}\cup L_{p+2}$.  Then $f_X$ extends to an
embedding $f\co V_4\hookrightarrow M$ with the desired
property.

Set $L'=L'_1\cup \dots \cup L'_{p+q}$, where
\begin{gather*}
  L'_1=f(l'_1),\quad
  L'_2=f(l'_2),\quad
  L'_{p+1}=f(l'_3),\quad
  L'_{p+2}=f(l'_4),\\
  L'_i=L_i\quad \text{for $i\in \{1,\ldots,p+q\}\setminus\{1,2,p+1,p+2\}$}.
\end{gather*}
Then there is a sequence $\modD \co L\rightarrow L'$, corresponding to the sequence
$\modD _{2,2}$, such that $\varphi(\modD )=D_{p,q}$.
Similarly, given $L\in \Lapq$, there is a sequence $\modD ^{-1}\co L\rightarrow L'$ such
that $\varphi(\modD ^{-1})=D_{p,q}^{-1}$.  In these situations, $L$ and
$L'$ are said to be related by a {\em $\modD ^{\pm 1}$--move}.

Now, we can prove the following.

\begin{proposition}
  \label{rr11}
  Let $L,L'\in \Lapq$ with $p,q\ge 2$.  Suppose that there is a morphism
  $S\co L\rightarrow L'$ in $\modS _{M,p+q}$, i.e., a sequence of elementary moves
  from $L$ to $L'$.  Then $L$ and $L'$ are related by a sequence of
  \begin{itemize}
  \item band-slides,
  \item $\modP _{i,j}$--moves for $i,j\in \{1,\ldots ,p\},i\neq j$ and for $i,j\in \{p+1,\ldots ,p+q\},i\neq j$,
  \item $\modQ _{i}$--moves for $i\in \{1,\ldots ,p+q\}$,
  \item $\modD ^{\pm 1}$--moves.
  \end{itemize}
\end{proposition}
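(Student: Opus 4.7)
The plan is to use Wall's generators for $\Opq$ to decompose $\varphi(S)$ into the allowed atomic moves. Set $M := \varphi(S)$. By Lemma \ref{hk-lem22} and the assumption that $L,L' \in \Lapq$, we have $M \in \Opq$, so Lemma \ref{hk-lem43} lets me write $M = G_1 G_2 \cdots G_k$ with each $G_i$ one of the generators in \eqref{Opq-gen}: a $P_{a,b}$ with both indices in $\{1,\ldots,p\}$ or both in $\{p+1,\ldots,p+q\}$, a $Q_a$, or $D_{p,q}^{\pm 1}$.

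Next I would construct a sequence $T\co L = L^0 \to L^1 \to \cdots \to L^k =: L''$ in $\modS _{M,p+q}$ realizing this factorization. Because the convention implicit in Lemma \ref{hk-lem22} has $\varphi$ multiply the individual step matrices of a concatenated sequence in the order opposite to concatenation, I apply at step $i$ the move corresponding to $G_{k-i+1}$: for $P_{a,b}$ or $Q_a$, take the corresponding elementary move; for $D_{p,q}^{\pm 1}$, perform a $\modD^{\pm 1}$-move, which is available from any admissible framed link of type $(p,q)$ via the $V_4$-embedding construction recalled just before the statement. Each $L^i$ remains admissible because $\Opq$ is by definition the stabilizer of $I_{p,q}$ under the congruence $N \mapsto G N G^t$, and by construction $\varphi(T) = G_1 G_2 \cdots G_k = \varphi(S)$.

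Finally, consider the composite sequence $U := S \circ \bar{T}\co L'' \to L \to L'$, where $\bar{T}$ is the reverse sequence of $T$. Functoriality of $\varphi$ gives $\varphi(U) = \varphi(S)\varphi(S)^{-1} = I_{p+q}$, so Theorem \ref{hk-thm21} shows that $L''$ and $L'$ are related by a sequence of band-slides. Concatenating $T$ (which takes $L$ to $L''$ using only the allowed $\modP$, $\modQ$, and $\modD^{\pm 1}$-moves) with this band-slide sequence produces the required path from $L$ to $L'$. The main obstacle is to verify that a $\modD^{\pm 1}$-move is actually available at every intermediate stage, not just at the starting link $L$; this reduces to the existence of an orientation-preserving embedding $V_4 \hookrightarrow M$ disjoint from the remaining components of the current link, which is guaranteed by the handlebody-retract argument given immediately before the statement of the proposition. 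A secondary bookkeeping point is simply keeping track of the order of factors relative to $\varphi$.
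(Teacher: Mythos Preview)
Your proposal is correct and follows essentially the same approach as the paper: factor $\varphi(S)\in\Opq$ using Wall's generators (Lemma~\ref{hk-lem43}), realize each factor geometrically by the corresponding $\modP$, $\modQ$, or $\modD^{\pm1}$--move to build a sequence $T\co L\to L''$ with $\varphi(T)=\varphi(S)$, and then apply Theorem~\ref{hk-thm21} to the composite with trivial $\varphi$ to get band-slides between $L''$ and $L'$. The only cosmetic differences are that the paper composes as $T\bar{S}\co L'\to L''$ rather than your $S\circ\bar{T}\co L''\to L'$, and you should avoid reusing the symbol $M$ for the matrix $\varphi(S)$ since $M$ already denotes the ambient $3$--manifold.
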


\begin{proof}
  Express $\varphi(S)\in \Opq$ as
  \begin{gather*}
    \varphi(S) = x_k\cdots x_2x_1,\quad k\ge 0,
  \end{gather*}
  where each $x_i$ is one of the generators given in \eqref{Opq-gen} or
  its inverse.  We can construct a sequence
  \begin{gather*}
    T\co  L=L_0 \xto{x_1}L_1\xto{x_2}\cdots \xto{x_k}L_k=L''
  \end{gather*}
  such that $L_0,\ldots ,L_k\in \Lapq$, and for each $m=1,\ldots ,k$, $L_m$ is
  obtained from $L_{m-1}$ by either a $\modP _{i,j}$--move, a $\modQ _i$--move
  or a $\modD ^{\pm 1}$--move corresponding to $x_m$.  We may regard $T$ as a
  sequence from $L$ to $L''$ of elementary moves, i.e., a morphism from
  $L$ to $L''$ in $\modS _{M,p+q}$, by replacing each $\modD ^{\pm 1}$ move in
  $T$ with the corresponding sequence of eight $\modW _{i,j}^{\pm 1}$--moves.
  Thus, $L$ and $L''$ are related by a sequence of moves listed in the
  proposition (without band-slides).

  Now, since the composite sequence $T \bar{S}\co L'\rightarrow L''$
  satisfies 
  \begin{gather*}
    \varphi(T\bar{S})=\varphi(T)\varphi(S)^{-1}=I_{p+q},
  \end{gather*}
  it follows from Theorem \ref{hk-thm21} that there is a sequence of
  band-slides from $L'$ to $L''$.

  Hence it follows that there is a sequence from $L$ to $L'$ of moves
  listed in the proposition.
\end{proof}

\subsection{Proof of Proposition \ref{r5}, (ii) implies (i)}

Throughout this section, $M$ is a compact, connected, oriented
$3$--manifold such that $H_1M\cong\Z  ^r$ is free abelian.  Let $L$
and $L'$ be two admissible framed links in $M$.  Let
\begin{gather}
  \label{e3}
  S\co  L=L^0\rightarrow L^1\rightarrow \cdots \rightarrow L^k=L'
\end{gather}
be a sequence of $\Z  $--null-homologous framed links between $L$ and
$L'$ such that, for each $i=1,\ldots ,k$, $L^i$ is obtained from $L^{i-1}$
by either stabilization, handle-slide, $\Z  $--null-homologous
$K_3$--move or IHX-move.

\subsubsection{Eliminating IHX-moves}

Note that, for each $i=1,\ldots,k$, there is a diffeomorphism $h_i\co
M_{L^{i-1}}\ct M_{L^i}$ which restricts to the identity outside a
handlebody in $M_{L^{i-1}}$ supporting the relevant move.
Let $h_S\co M_L\ct M_{L'}$ be the diffeomorphism obtained by composing
the diffeomorphism $h_1,\dots,h_k$.

Set
\begin{gather*}
  \eta (S) :=\theta _4(\tr((W^M_{L'})^{-1}\circ C_{h_S}\circ W^M_L))\in H_4(H_1M).
\end{gather*}

Since $H_1M\cong\Z ^r$, the homology group $H_4(H_1M)\cong\bigwedge^4H_1M$ is finitely generated by elements of the form
$p(x_1\wedge\dots \wedge x_4)$ with $x_1,\ldots ,x_4\in H_1M$.  Hence, using
Proposition \ref{r14}, we can construct a sequence $T$ of admissible
IHX-moves
\begin{gather*}
  T\co L'=K^0\rightarrow K^1\rightarrow \cdots \rightarrow K^m=L''
\end{gather*}
from $L'$ to an admissible framed link $L''$
such that
\begin{itemize}
\item $\eta (T)=-\eta (S)$,
\item there are orientation-preserving embeddings $f_1,\ldots ,f_m\co V_4\hookrightarrow
  M\setminus L'$ with mutually disjoint images satisfying
  \begin{gather*}
    K^i=L'\cup f_1(L^\adm_{IHX})\cup \dots \cup f_i(L^\adm_{IHX})
  \end{gather*}
  for $i=0,\ldots ,m$.
\end{itemize}

Then
\begin{gather*}
  \begin{split}
    \eta (TS)
    &=\theta _4(\tr((W^M_{L''})^{-1}\circ C_{h_{TS}}\circ W^M_L))\\
    &=\theta _4(\tr((W^M_{L''})^{-1}\circ C_{h_{T}}\circ C_{h_{S}}\circ W^M_L))\\
    &=\theta _4(\tr((W^M_{L''})^{-1}\circ C_{h_{T}}\circ W^M_{L'}\circ(W^M_{L'})^{-1}\circ C_{h_{S}}\circ W^M_L))\\
    &=\theta _4(\tr((W^M_{L''})^{-1}\circ C_{h_{T}}\circ W^M_{L'}))+\theta _4(\tr((W^M_{L'})^{-1}\circ C_{h_{S}}\circ W^M_L))\\
    &=\eta (T)+\eta (S)=0,
  \end{split}
\end{gather*}
where $h_{TS}\co M_{L}\rightarrow M_{L''}$ is the diffeomorphism associated to the
composite sequence $TS\co L\rightarrow L''$.  Then, by Theorem \ref{t2} with
$N=[\pi _1M,\pi _1M]$, it follows that $L$ and $L''$ are related by a
sequence of stabilizations, handle-slides, and $K_3([\pi _1M,\pi _1M])$--moves,
i.e., $\Z  $--null-homologous $K_3$--moves.

Thus, we may assume without loss of generality that there are no
IHX-moves in the sequence $S$.

\subsubsection{Eliminating $\Z  $--null-homologous $K_3$--moves}
\label{sec:step-2.-eliminating}

Unlike in the other part of this section, here, let us distinguish
ambient isotopic framed links.
We have a sequence $S$ in \eqref{e3} of stabilizations, handle-slides,
$\Z  $--null-homologous $K_3$--moves and ambient isotopies.
By modifying this sequence using ambient isotopy if necessary, we may
assume that there is a handlebody $V$ in the interior of $M$
satisfying the following conditions.
\begin{itemize}
\item All the framed links involved in $S$ are contained in  $V$.
\item For each $i=1,\dots,k$, $L_{i}$ is obtained from $L_{i-1}$ by
  either a stabilization, a handle-slide, a $\Z $--null-homologous
  $K_3$--move or an ambient isotopy in $M$.  (Here and below, a
  ``$\Z$--null-homologous $K_3$--move'' involves framed links that are
  $\Z$--null-homologous in $M$, but not necessarily so in $V$.)
\end{itemize}
Indeed, if we take a handle decomposition of $M$ based on the cylinder
$\partial M\times[0,1]$
\begin{gather*}
  M=(\partial M \times[0,1]) \cup(\text{$1$--handles})\cup(\text{a handlebody $V$}),
\end{gather*}
then any framed link in $M$ can be
isotoped into $V$.  Moreover, we can isotope into $V$ the
handlebodies in which each of the stabilizations, handle-slide,
$\Z $--null-homologous $K_3$--move takes place.

The inclusion $M\setminus V\subset M$ induces surjective
homomorphisms $\pi_1(M\setminus V)\twoheadrightarrow\pi_1M$, and
$[\pi _1(M\setminus V),\pi _1(M\setminus V)]\twoheadrightarrow[\pi _1M,\pi _1M]$.

Since $\pi_1M$ is finitely generated, the commutator subgroup
$[\pi_1M,\pi_1M]$ is generated by the conjugates in $\pi_1M$ of
finitely many elements $x_1,\ldots,x_t \in [\pi_1M,\pi_1M]$, $t\ge0$.
Let $\tx_j\in [\pi _1(M\setminus V),\pi _1(M\setminus V)]$ be a lift
of $x_j$.  We can find an admissible framed link
\begin{gather*}
  K=K^+_1\cup K^-_1\cup \dots \cup K^+_t\cup K^-_t
\end{gather*}
in $M\setminus V$
satisfying the following conditions.
\begin{itemize}
\item[(1)] The (free) homotopy classes of $K^+_j$ and $K^-_j$ are $\tx_j$.
\item[(2)] There are $t$ mutually disjoint annuli $A_1,\ldots ,A_t$ in
  $M\setminus V$ such that $\partial A_j=K^+_j\cup K^-_j$,
\item[(3)] The framing of $K^{\pm}_j$ is $\pm1$.
\end{itemize}

Set $\tL=L\cup K$, $\tL'=L'\cup K$ and $\tL^i=L^i\cup K$,
$i=0,\ldots,k$.  Then $\tL$ (resp.\ $\tL'$) is obtained from $L$
(resp.\ $L'$) by $t$ pair-moves.  Thus, it suffices to show that for
each $i=1,\ldots ,k$, $\tL^{i-1}$ and $\tL^i$ are related by a
sequence of stabilizations, handle-slides and ambient isotopies in
$M$.  We may safely assume that $k=1$.

If $L(=L^0)$ and $L'(=L^1)$ are related by either a stabilization or a
handle-slide in $V$, then clearly $\tL$ and $\tL'$ are related by a
stabilization or a handle-slide.

If $L$ and $L'$ are related by a $\Z$--null-homologous $K_3$--move in $V$, then
let us assume that $L'=L\cup J\cup J'$ is obtained from $L'$ by adding
a $\Z$--null-homologous component $J$ and a small $0$--framed meridian
$J'$ of $J$.  (Of course, the case of $\Z$--null-homologous $K_3$--move
in the other direction is similar.)  Since the homotopy classes of the
$K^\pm_j$ generate $[\pi_1M,\pi_1M]$ normally in $\pi_1M$, it follows
that we can handle-slide $J$ over the $K^\pm_j$ finitely many times to make $J$
null-homotopic in $M$.  Then there is a sequence from $J$ to an
unknot of crossing changes of $J$ with any components of the framed
link other than $J'$.  Such crossing changes can be realized by
handle-slides of link components over $J'$.  Thus we may assume that
$J\cup J'$ is a Hopf link such that $J'$ is of framing $0$ or $+1$.  It is
well known that $J\cup J'$ is related to the empty link by a sequence
of stabilizations and handle-slides.  Hence, $L$ and
$L'$ are related by a sequence of stabilizations and handle-slides in $M$.

If $L$ and $L'$ are ambient isotopic in $M$, then they are related by
a sequence of
\begin{itemize}
\item ambient isotopies in $M\setminus (A_1\cup \dots \cup A_t)$,
\item crossing changes of a component with some $A_j$.
\end{itemize}
We may assume without loss of generality that $L$ and $L'$ are related
by one of these moves.  If $L$ and $L'$ are ambient isotopic in
$M\setminus (A_1\cup \dots \cup A_t)$, then $\tL$ and $\tL'$ are ambient isotopic in
$M$.  If $L$ and $L'$ are related by a crossing change of a component
$L_c$ of $L$ with $A_j$, then $\tL$ and $\tL'$ are related by two
handle-slides.  (Here, we first slide $L_c$ over $K^+_j$, and then we
slide it over $K^-_j$.)

\subsubsection{Eliminating stabilizations}

Now, $L$ and $L'$ are related by a sequence $S$ in \eqref{e3} of
stabilizations and handle-slides.

It is well known that we can exchange the order of consecutive
stabilizations and handle-slides to obtain a new sequence
\begin{gather*}
  S'\co  L \rightarrow \cdots \rightarrow \tL\rightarrow \cdots \rightarrow \tL'\rightarrow \cdots \rightarrow L',
\end{gather*}
where $\tL$ is obtained from $L$ by adding isolated $\pm 1$--framed
unknots by stabilizations, and $\tL'$ is obtained from $\tL$ by a
sequence of handle-slides, and $L'$ is obtained from $\tL'$ by
removing isolated $\pm 1$--framed unknots by stabilizations.
Note that $\tL$ and $\tL'$ are admissible.

We may assume that $\tL$ (and hence $\tL'$) is admissible of type
$(p,q)$ with $p,q\ge 2$, since otherwise we can add the number of
components by using stabilizations.

Thus, we have only to consider the sequence $\tL\rightarrow \cdots \rightarrow \tL'$ of
handle-slides, where $\tL$ and $\tL'$ are admissible of type $(p,q)$
with $p,q\ge 2$.

\subsubsection{Reduction to $\modD ^{\pm 1}$--moves}

Suppose that $L$ and $L'$ are related by a sequence of handle-slides
and that $L$ and $L'$ are admissible of type $(p,q)$ with $p,q\ge 2$.

We fix an orientation and ordering of $L$ and $L'$.  Then $L$ and $L'$
are, as oriented, ordered framed links, related by elementary moves as
defined in Section \ref{sec:orient-order-admiss}.  Then by Proposition
\ref{rr11} it follows that $L$ and $L'$ are related by a sequence of
moves listed in Proposition \ref{rr11}.  Hence $L$ and $L'$, as
non-ordered, non-oriented framed links, are related by a sequence of
band-slides and $\modD ^{\pm 1}$--moves, where each $\modD ^{\pm 1}$--move can be
applied to any $4$--component sublinks of framings $+1,+1,-1,-1$ by
assuming any orientation.

\subsubsection{$\modD ^{\pm 1}$--moves and lantern-moves}

Now it suffices to prove the following lemma.

\begin{lemma}
  \label{r4}
  Suppose that admissible framed links $L$ and $L'$ are related by a
  $\modD ^{\pm1}$--move.
  Then there is a sequence between $L$ and $L'$ of two lantern-moves,
  and finitely many pair-moves.
\end{lemma}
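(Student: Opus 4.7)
The plan is to realize a $\modD^{\pm 1}$--move by a composition of exactly two lantern-moves, together with some pair-moves that introduce and remove auxiliary parallel $+1/-1$ framed pairs. Recall from Section~\ref{sec9} that a $\modD^{\pm 1}$--move is the local move inside the genus--$4$ handlebody $V_4$ that takes the $4$--component admissible sublink $l$ of framings $(+1,+1,-1,-1)$ (Figure~\ref{use-hs}) to $l'$ via the specific sequence of eight handle-slides associated to the matrix $D_{2,2}\in \OO(2,2;\Z)$. On the other hand, a lantern-move takes place inside a genus--$3$ handlebody $V_3$ and exchanges the two configurations $K$ and $K'$ (Figure~\ref{lantern}) on a $3$--punctured torus that figure as the two sides of the lantern relation.

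The first step is to take $l\subset V_4$ and, using a pair-move, adjoin a parallel pair $K^+\cup K^-$ of framings $+1$ and $-1$, positioned so that $l\cup K^+\cup K^-$ sits as the framed link $K$ of a lantern-move inside an embedded genus--$3$ subhandlebody $f_1(V_3)\subset V_4$. The key geometric point is that the four components of $l$ already supply the bulk of one side of the lantern relation in an essential thickened $3$--punctured torus inside $V_4$, and the adjoined parallel pair completes the configuration to the full six curves prescribed by $K$. Applying the lantern-move $K\leftrightarrow K'$ inside $f_1(V_3)$ then yields an intermediate $6$--component admissible link~$L''$.

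The second step is symmetric. After an ambient isotopy that rearranges the picture, $L''$ is identified with $K$ (or $K'$) inside a second embedded genus--$3$ subhandlebody $f_2(V_3)\subset V_4$. Applying a second lantern-move produces a $6$--component admissible link $L'''$ which, by construction, has the form $l'\cup J^+\cup J^-$ for some parallel pair $J^+\cup J^-$ of framings $+1$ and $-1$. A final pair-move removes this pair and leaves $l'$, completing the passage from $l$ to $l'$ via two lantern-moves and two pair-moves.

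The main obstacle is the concrete geometric identification carried out above: one must exhibit the embeddings $f_1,f_2\co V_3\hookrightarrow V_4$ and verify that $l$ (respectively $l'$) together with an appropriate $\pm 1$--framed parallel pair is isotopic to the configuration $K$ (respectively $K'$) inside the image of $V_3$. This reduces to a diagrammatic comparison between Figure~\ref{use-hs} and Figure~\ref{lantern}, and is ultimately forced by the fact that the linking-matrix action $D_{2,2}$ coincides with the net effect on a $4$--component admissible sublink of applying the lantern relation twice, modulo the cancellation of the two auxiliary $\pm 1$--framed parallel pairs introduced and removed by the pair-moves.
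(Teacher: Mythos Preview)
Your plan has a genuine gap rooted in a misreading of the lantern-move.  By definition, the links $K$ and $K'$ exchanged in a lantern-move lie in $V_3$ and have \emph{all components $+1$--framed}; they correspond to the two sides of the lantern relation in a $3$--punctured disk, so each has three or four components.  Your claim that the six-component link $l\cup K^+\cup K^-$ ``sits as the framed link $K$ of a lantern-move'' cannot be right: two of the four components of $l$ have framing $-1$, and so does $K^-$.  Those $-1$--framed components can only appear in the ambient part of the link during a lantern-move, never in the $K$ or $K'$ part being exchanged.  For the same reason, the assertion that ``the four components of $l$ already supply the bulk of one side of the lantern relation'' is false as stated.

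The paper's proof is organized differently.  One introduces \emph{several} pair-moves whose components are small meridians to the strings $\gamma_i$ (three such pairs on the $L$ side, three on the $L'$ side), and then performs a single lantern-move that involves only \emph{one} of the original $+1$--framed components ($l_1$ on the $L$ side, $l'_2$ on the $L'$ side) together with the newly created $+1$--framed meridional components; the $-1$--framed components of $l$ and the $-1$--framed meridians remain in the ambient link.  After each lantern-move one removes a leftover parallel pair, and the two halves of the construction meet at a common intermediate link.  So the argument does use exactly two lantern-moves, but the supporting pair-moves are more numerous and of a different nature than you describe.

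Finally, your concluding sentence --- that the identification is ``ultimately forced'' by the coincidence of linking-matrix actions --- does not close the gap.  Equality in $\OO(2,2;\Z)$ only guarantees agreement up to band-slides (Theorem~\ref{hk-thm21}); it does not by itself produce the required isotopy between the configurations inside $V_3$.  The geometric identification has to be carried out explicitly.
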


\begin{proof}
  Suppose that $L'$ is obtained from $L$ by a $\modD ^{+1}$--move.

  Let $l=l_1\cup l_2\cup l_3\cup l_4\subset L$ be the sublink of $L$
  involved in the $\modD ^{+1}$--move.  In Figure
  \ref{use-lantern-new}(a), the framed link $l$ in a genus $4$
  handlebody $V_4$ in $M$ is depicted.  Here, as usual, $V_4$ is
  identified with the complement of a trivial string link $\gamma $ in
  the cylinder $D^2\times [0,1]$.  Recall that the meridian to each
  strand of $\gamma $ is null-homologous in $M$, of zero framing,
  and having zero linking number with each component of $L$.

  Then $L'$ is obtained from $L$ by replacing $l$ with a
  $4$--component sublink $l'=l'_1\cup l'_2\cup l'_3\cup l'_4$ in Figure
  \ref{use-lantern-new}(e).
  \FI{use-lantern-new}{}
  We can go from (a) to (e) by using pair-moves and lantern-moves
  as follows.

  Starting at (a), we first apply pair-moves along the meridians to
  the first, second and fourth strands, and then apply a lantern-move
  involving $l_1$ to obtain (b).
  Next, we arrive at (c) by applying a pair-move to remove two
  components which links with the second and fourth strands.

  Similarly, we can go from (e) to (c).  We get from (e) to (d) by
  using pair-moves along the meridians to the first, third and fourth
  strands, and a lantern-move involving $l'_2$.  Then we get at (c) by
  one pair-move.
\end{proof}

\begin{remark}
  \label{r6}
  A lantern-move can be realized by stabilizations, pair-moves, and
  one $\modD ^{\pm 1}$--move.  To see this, one embeds $V_4$ in $M$ in such a
  way that the meridian to the third strand of $\gamma $ is mapped to a
  $0$--framed unknot bounding a disk which does not intersect the other
  components of the framed link.  This amounts to removing the third
  strand of $\gamma $ in the definition of $\modD ^{\pm 1}$--move.
  Then it is not difficult to see that this special
  $\modD^{\pm1}$--move is equivalent to a lantern-move up to
  stabilizations and pair-moves.
\end{remark}

\subsection{Realizing moves with the other moves}
\label{sec:realizing-moves-with}

Here we give a few remarks about realizing some moves in Theorem
\ref{admissible} with the other moves.

In Theorem \ref{admissible}, the pair-moves are not necessary.
Pair-moves on an admissible framed link in $M$ are used to modify the
normal subgroup $N_L$ of $\pi_1M$.  This can be done also by using a
sequence of IHX-moves.  Note that any commutator $[a,b]$ in $\pi_1M$
can be realized as the homotopy class of a component in an IHX-link
$f(L_{IHX})\subset M$.  Here we need to embed $V_4$ in such a way that
\begin{gather*}
  f_*(x_1)=a,\quad  f_*(x_2)=b,\quad f_*(x_3)=f_*(x_4)=1,
\end{gather*}
where $x_1,\ldots,x_4\in \pi_1 V_4$ are as defined in Section
\ref{sec:effect-an-ihx}.

If $\rank H_1M<4$, then the admissible IHX-moves are not necessary in
Theorem \ref{admissible} since $H_4(H_1M)=0$.

\bibliographystyle{plain}

\end{document}